\documentclass[12pt]{amsart}

\usepackage[normalem]{ulem}
\usepackage[ruled,vlined]{algorithm2e}
\usepackage{mathtools} 
\usepackage{extarrows} 
\usepackage{hyperref}
\usepackage{xcolor}
\usepackage{natbib}
\usepackage{comment}
\usepackage{enumitem}
\usepackage{tikz}
\usepackage{mathdots}
\usepackage{yhmath}
\usepackage{cancel}
\usepackage{color}
\usepackage{siunitx}
\usepackage{array}
\usepackage{multirow}
\usepackage{gensymb}
\usepackage{tabularx}
\usepackage{extarrows}
\usepackage{booktabs}
\usetikzlibrary{fadings}
\usetikzlibrary{patterns}
\usetikzlibrary{shadows.blur}
\usetikzlibrary{shapes}
\usepackage{caption}
\usepackage{array, caption, floatrow, tabularx, makecell, booktabs}
\usepackage{tikz-cd}
\usepackage{nicematrix}
\usepackage{graphicx}
\graphicspath{ {./images/} }
\usepackage{adjustbox}
\usepackage{longtable}
\usepackage{booktabs}
\usepackage{multirow}
\usepackage{rotating}
\usepackage{amssymb,amsmath,amsfonts,color}
\usepackage{graphics}
\usepackage{subcaption}
\usepackage{niceframe}
\usepackage{threeparttable}
\usepackage{nicematrix}
\usepackage{makecell}



\newtheorem{theorem}{Theorem}[section]
\newtheorem{prop}[theorem]{Proposition}
\newtheorem{lemma}[theorem]{Lemma}
\newtheorem{corollary}[theorem]{Corollary}
\newtheorem{definition}[theorem]{Definition}
\newtheorem{remark}[theorem]{Remark}
\newtheorem{Construction}[theorem]{Construction}

\newtheorem{example}[theorem]{Example}
\newtheorem{notation}[theorem]{Notation}
\newtheorem{op}[theorem]{Open Problem}

\def\O{\mathcal{O}}

\newcommand{\rmv}[1]{}

\def\floor#1{\lfloor#1\rfloor}
\def\ceil#1{\lceil#1\rceil}

\title{Cover-free families on graphs}

\author{Prangya Parida}
\address{Department of Mathematics and Statistics, University of Ottawa, Ottawa, Canada}
\email{ppari017@uottawa.ca}

\author{Lucia Moura}
\address{School of Electrical Engineering and Computer Science, University of Ottawa, Ottawa, Canada}
\email{lmoura@uottawa.ca}

\date{\today}

\makeatletter
\renewcommand{\l@subsection}{%
  \@tocline{2}{2pt}{1.5em}{3em}{}%
}
\makeatother

\begin{document}

\begin{abstract}
A family of subsets of a $t$-set is a \emph{$d$-cover-free family} or $d$-CFF if no subset in the family is contained in the union of any $d$ other subsets. Let $t(d, n)$ denote the minimum $t$ for which there exists a $d$-CFF on a $t$-set with $n$ subsets. Since a $1$-CFF is the same as a Sperner family, using Sperner's theorem, we get $t(1, n) \sim \log_{2}(n)$ as $n$ grows. Erdös, Frankl, and Füredi (JCTA, 1982) proved that $3.106\log_{2}(n) < t(2,n) < 5.512\log_{2}(n)$. This paper focuses on generalizing $1$-CFF and $2$-CFF using a graph $G$ where vertices correspond to subsets in the set system. A $G$-Sperner$(t, n)$ is a family of subsets of a $t$-set such that each edge of $G$ specifies a pair of subsets not contained in each other, where as a $G$-CFF$(t, n)$ is a family of subsets of a $t$-set such that it is $G$-Sperner and the union of a pair of subsets corresponding to each edge of $G$ does not contain any other subset in the family. Let $t_s(G)$ and $t(G)$ denote the minimum $t$ for which there exist a $G$-Sperner$(t, n)$ and a $G$-CFF$(t, n)$, respectively. In this way, $t_s(K_n) = t(1, n)$ and $t(K_n) = t(2, n)$. Firstly, we prove $t_s(G) = t(1, \chi(G))$ for any simple graph $G$ and provide various upper and lower bounds for $t(G)$. The \emph{trivial bound}, $t(1, n) \leq t(G) \leq t(2, n)$ holds for any simple graph $G$ with no isolated vertex, with the lower bound tight for an infinite family of star graphs and the upper bound tight for complete graphs. We study when these bounds can be improved and give better constructive upper bounds for families of graphs such as stars, paths, cycles, wheels, and windmill graphs. In particular, a construction based on mixed-radix Gray codes yields $\log_{2}(n) \leq t(P_n) \leq t(C_n) \leq 1.893\log_{2}(n) + \O(1)$ where $P_n$ and $C_n$ are paths and cycles with $n$ vertices.
\end{abstract}

\maketitle

\section{Introduction}

Cover-free families (CFFs) were first introduced by Kautz and Singleton \cite{KS} as \emph{superimposed codes}. Since then, it has been actively studied under different names, such as \emph{families of finite sets in which no set is covered by the union of $d$ others}\cite{erdos1982families, EFF} and \emph{disjunct matrices}\cite{DH}. Cover-free families have several applications to combinatorial group testing (first studied by Hwang and Sós \cite{HS}), cryptographic problems, and digital communications. Also, there have been various generalizations of CFFs, for these applications \cite{dalai2025efficient, gargano2018low, IM1, thaismourastructureaware, rescigno2023bounds, SW, SWZ, W}. For more information, we refer the reader to the survey \cite{IM2}.

A  \emph{set system} $\mathcal{F}$ is a pair $(\mathcal{X}, \mathcal{B})$ where $\mathcal{X}$ is a set of points and $\mathcal{B}$ is a set of blocks or subsets of $\mathcal{X}$. The \textit{incidence matrix} \( \mathcal{M} \) of a set system  $(\mathcal{X}, \mathcal{B})$  is a binary matrix with rows corresponding to elements of $\mathcal{X}$ and columns corresponding to elements of $\mathcal{B}$, such that $\mathcal{M}_{ij} = 1$ if and only if $x_i \in B_j$, for all   $x_i \in \mathcal{X}$  and $B_j \in \mathcal{B}$. Next, we define cover-free families. 

\begin{definition}
\label{def:1}
    For positive integers $d < t \leq n$, a set system $\mathcal{F = (\mathcal{X}, \mathcal{B})}$ with $|\mathcal{X}| = t$ and $|\mathcal{B}| = n$ is a \emph{$d$-cover-free family}, denoted by $d$-CFF$(t,n)$, if for any $d+1$ subsets $B_{i_0}, B_{i_1}, \cdots, B_{i_d}$,
\begin{equation}
\label{cff-eq}
    B_{i_0} \nsubseteq B_{i_1} \cup B_{i_2} \cdots \cup B_{i_d}.
\end{equation}
\end{definition}
Equation \ref{cff-eq} is equivalent to requiring that the set difference of $B_{i_0}$ with the union of $d$ others will contain at least $1$ element, that is,
    $$|B_{i_0} \setminus \left(\cup_{j = 1}^d B_{i_j}\right)| \geq 1.$$
    
When $d = 1$, then a $1$-CFF$(t, n)$ is also called a \emph{Sperner family}, which is a set system where sets do not contain one another. We denote by $t(d,n)$ the minimum value $t$ for which there exists a $d$-CFF$(t,n)$, that is,
 $$t(d,n) = \min\{t : \exists \text{ a } d\text{-CFF}(t,n)\}.$$
 
We say that a $d$-CFF$(t, n)$ with $t = t(d, n)$ is \emph{optimal}. As a consequence of Sperner's Theorem, we get $t(1, n) = \min\left \{t : \binom{t}{\floor{\frac{t}{2}}}  \geq n \right \}$ and it is well-known that $t(1, n) \sim \log_{2}n$ as $n \rightarrow \infty$. We provide more relevant details of $t(1, n)$ in Section \ref{more-1-CFFs}. 
In general, we have:
\begin{equation}
\label{simple-additive-bound}
    t(d, n+1) \leq t(d, n) + 1,
\end{equation}
since from any $d$-CFF$(t, n)$, $\mathcal{F} = ([1, t], \mathcal{B})$, we can build $\mathcal{F}' = ([1, t+1], \mathcal{B} \cup \{\{t+1\}\})$, which is a $d$-CFF$(t+1, n+1)$. 

The purpose of this article is to study a generalized version of $1$-CFFs and $2$-CFFs by extending these definitions to include a graph $G$, where vertices of $G$ correspond to sets in the set system. Such extensions have been introduced for general $d$ using hypergraphs \cite{IM1,thaismourastructureaware}, but in the present paper, we do an in-depth study of the graph case. A $G$-Sperner$(t, n)$ is a set system $([1, t], \mathcal{B})$ such that the sets corresponding to endpoints of an edge do not contain one another (Definition \ref{def:G-sperner}). In this sense, a $K_n$-Sperner$(t, n)$ coincides with a $1$-CFF$(t, n)$ or a Sperner family. An \emph{edge cover-free family} on a graph $G$, denoted by $G$-ECFF$(t, n)$, is a set system $([1, t], \mathcal{B})$ such that for any $ e \in E(G)$, the union of the sets corresponding to vertices in $e$ does not contain any other set in the set system. A cover-free family on a graph $G$, denoted by $G$-CFF$(t, n)$, is a set system $([1, t], \mathcal{B})$ such that for any $ \emptyset \neq e' \subseteq e \in E(G)$, the union of the sets corresponding to vertices in $e'$ does not contain any other set in the set system. Thus, a $K_n$-CFF$(t, n)$ is equivalent to a $2$-CFF$(t, n)$. Furthermore, being $G$-CFF is equivalent to being both $G$-ECFF and $G$-Sperner.  We denote by $t_{s}(G)$, $t_e(G),$ and $t(G)$ the minimum value of $t$ for which there exists a $G$-Sperner$(t, n)$, a $G$-ECFF$(t, n)$, and a $G$-CFF$(t, n)$, respectively, where $n = |V(G)|$. Clearly, $t_s(K_n) = t(1, n)$ and $t(K_n) = t(2, n)$. It is also known that $t_e(K_n) = t(K_n) = t(2, n)$ \cite{IM1, thaismourastructureaware}, but in general, we may have $t_e(G) \neq t(G)$ for some $G$. 
Let $L_n$ be a graph with $n$ vertices and the edge set consisting of loops on all vertices. Then, $t(L_n) = t_e(L_n) = t(1, n)$.

 In this paper, we first prove the exact value of $t_{s}(G)$ for a simple graph $G$, that is, $t_{s}(G) = t(1, \chi(G))$ where $\chi(G)$ is the chromatic number of $G$ (Corollary \ref{optimal-H-in}). We achieve this by providing a characterization of $G$-Sperner families via graph homomorphisms to a special graph, which we call a \emph{Sperner graph}. 

We further show that a $G$-CFF$(t, n)$ provides a richer combinatorial structure that lies between being a $1$-CFF and a $2$-CFF in terms of bounds, giving the trivial bounds: $t(1, n) \leq t(G) \leq t(2, n)$ (see Corollary \ref{trivial-bound}), for any graph $G$ with no isolated vertices. We exemplify this claim in Figure \ref{fig:GraphComparison} for a cycle of length $12$ $(C_{12})$ where the the incidence matrix of a $C_{12}$-CFF$(7, 12)$ is given.
\begin{figure}
    \centering
    \includegraphics[width=0.8\linewidth]{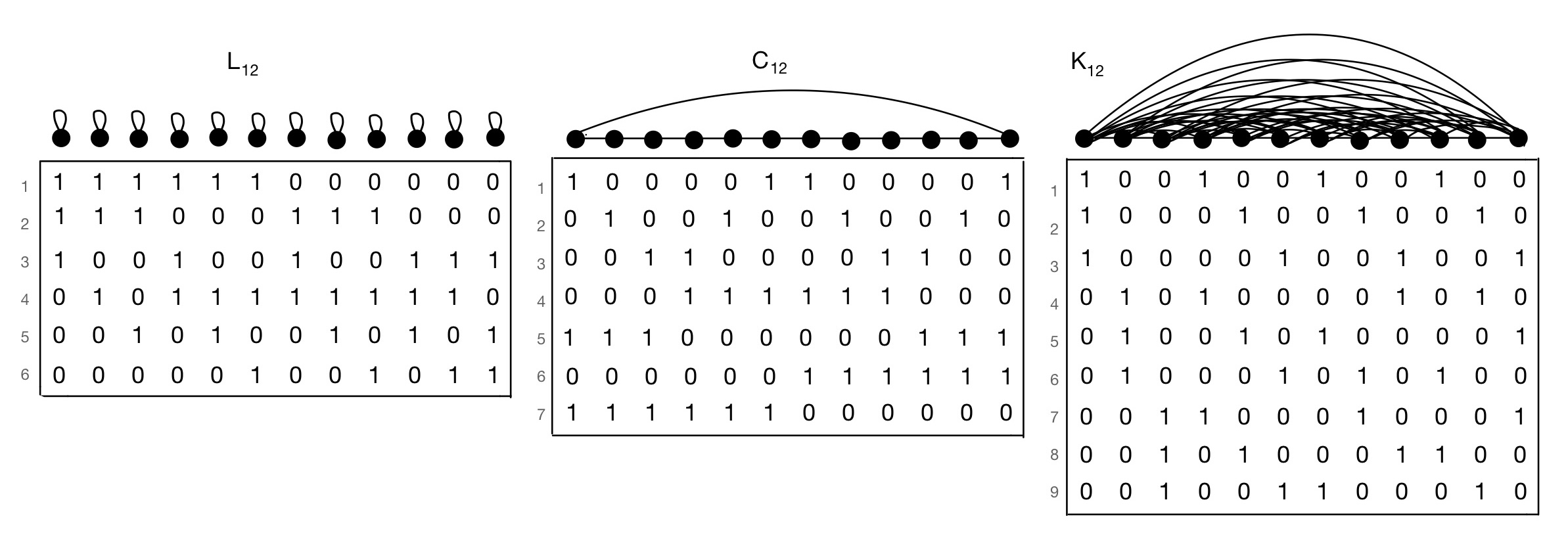}
    \caption{$t(L_{12}) = t(1, 12) = 6, 6 \leq t(C_{12}) \leq 7$, and $t(K_{12}) = t(2, 12) = 9$}
    \label{fig:GraphComparison}
\end{figure}
Using the known values of $t(1, 12) = 6$ and $t(2, 12) = 9$ \cite{LVW}, we obtain the trivial bounds $6 \leq t(C_{12}) \leq 9$, and the existence of a $C_{12}$-CFF$(7, 12)$ shows that the trivial upper bound is not always tight, and that $6 \leq t(C_{12}) \leq 7$. This observation motivates us to study conditions under which the trivial upper bound can be improved upon and finding which families of graphs meet either of these extreme bounds. We also find some families of graphs where $t(G)$ is close to the trivial lower bound. 

From the above definitions and $t_s(G) = t(1, \chi(G))$, it is easy to see that $t_e(G) \leq t(G) \leq t_e(G) + t_s(G) = t_e(G) + t(1, \chi(G))$. Thus, we further investigate under which conditions $t(G)$ is close to $t_e(G)$, even when $\chi(G)$ may be large. Surprisingly, for any graph $G$ with no isolated vertex, we show that $t_e(G) \leq t(G) \leq t_e(G) + 2$ (Theorem \ref{GbarCFF=GCFF}).

Tables \ref{tab:lowerbound} and \ref{tab:upperbound} summarize various lower and upper bounds on $t(G)$ proven in this paper, along with infinite families of graphs achieving these bounds. We assume that $G$ is a graph with no isolated vertex and $|V(G)| = n \geq 3$. We denote by $S_n$ the star graph with $n$ vertices (see Definition \ref{def:star}), by $\mathcal{E}_n$ with $n$ even, the graph with $\frac{n}{2}$ single edge components (see Notation \ref{En}), and by $G + \{x\}$ the graph $G$ with the addition of a universal vertex $x$ (see Notation \ref{G+x}). 
\begin{table}[h!]
\centering
\begin{threeparttable}
\begin{tabular}{|c|c|c|}
\hline
Lower bound for $t(G)$ & Extra assumptions for $G$ & The lower bound is tight infinitely often \\ \hline
$t(1, n) \leq t(G)$ & --- & \makecell{\\ $S_n$ with $n = \binom{x}{\floor{\frac{x}{2}}} + 1$ \\ (Corollary \ref{star-meeting-trivial-lower-bound})} \\ \hline
\makecell{$t(1, n) + 1 \leq t(G)$ \\ (Corollary \ref{graph-with-vertices-central-binomial})} & $n = \binom{x}{\floor{\frac{x}{2}}}$ & \makecell{\\ $\mathcal{E}_{\binom{x}{\floor{\frac{x}{2}}}}$ with $\binom{x}{\floor{\frac{x}{2}}}$ even  \\ (Theorem \ref{matching-sperner})}\\ \hline
$t_{e}(G) \leq t(G)$ & --- & \makecell{$\delta(G) \geq 2$ \\ (Theorem \ref{GbarCFF=GCFF} part \ref{delta=2})}  \\ \hline
\makecell{$t_e(G) + 1 \leq t(G)$ \\ (Proposition \ref{matching})} & $G = \mathcal{E}_n$ with $n$ even & \makecell{ \\ $\mathcal{E}_8$\tnote{$\dagger$} \\ (Proposition \ref{example-E8})} \\ \hline
\end{tabular}
\caption{Lower bounds for $t(G)$}
\label{tab:lowerbound}
\begin{tablenotes}
\item[$\dagger$] The bound is tight for $\mathcal{E}_8$, not known if infinitely often. 
\end{tablenotes}
\end{threeparttable}
\end{table}

Table \ref{tab:lowerbound} shows an infinite family of graphs that meets the trivial lower bound (first row) and an infinite family of graphs that does not meet the trivial lower bound (second row). It is also shown that $\delta(G) \geq 2$ is a sufficient condition for $t_e(G) = t(G)$ (third row) and that $\mathcal{E}_{n}$ is an infinite family of graphs not meeting the lower bound $t_e(G) \leq t(G)$ (fourth row). 

\begin{table}[h!]
\centering
\begin{threeparttable}
\begin{tabular}{|c|c|c|}
\hline
Upper bound for $t(G)$ & Extra assumptions for $G$ & The upper bound is tight infinitely often \\ \hline
\makecell{$t(G) \leq t(2, n)$ \\ (Corollary \ref{trivial-bound})} & --- & $K_n$ \\ \hline

\makecell{$t(G) \leq t_e(G) + 2$ \\ (Theorem \ref{GbarCFF=GCFF} part \ref{t_e(G)+2})} & $\delta(G) = 1$ & \makecell{ \\ 1. $\mathcal{E}_{2m}$ with $m > \floor{\tfrac{1}{2} \overline{(\overline{m} + 1)}}$ \\ (Proposition \ref{matching}) \\ \\ 2. $\mathcal{E}_{n}$ with $n = \binom{x}{\floor{\frac{x}{2}}}, n$ even \\ (Theorem \ref{matching-sperner})} \\ \hline

\makecell{$t(G) \leq t_{e}(G) + 1$ \\ (Theorem \ref{GbarCFF=GCFF} part \ref{t_e(G)+1})} & \makecell{$\delta(G) = 1$ with no \\ single edge components} & \makecell{\\ $S_n$ (Corollary \ref{star-optimum})}  \\ \hline

\makecell{$t(G) \leq t(H) + 1$ \\ (Theorem \ref{star-cons-gen})} & $G = H + \{x\}$ with $\delta(H) \geq 1$ & \makecell{ \\ 1. $H$ with $t(H) = t(1, |V(H)|) + 1$ \\ (Corollary \ref{star-gen-cons-special-case}) \\ \\ 2. $H = S_n$ (Corollary \ref{Sn+x-optimum})} \\ \hline
\end{tabular}
\caption{Upper bounds for $t(G)$}
\label{tab:upperbound}
\end{threeparttable}
\end{table}

Table \ref{tab:upperbound} shows various upper bounds for $t(G)$. We show that there are two infinite families of $\mathcal{E}_{n}$, meeting the upper bound of $t(G) \leq t_e(G) + 2$. If $\delta(G) = 1$ and $G$ has no connected component as $K_{2}$, then we have a tighter upper bound $t(G) \leq t_e(G) + 1$ and $S_n$ is an infinite family of graphs meeting this upper bound. Recall from Table \ref{tab:lowerbound} that $t(G) = t_e(G)$ whenever $\delta(G) \geq 2$. The bound $t(H + \{x\}) \leq t(H) + 1$ generalizes the bound in Equation \ref{simple-additive-bound} and is tight for any $H$ with $t(H) = t(1, |V(H)|) + 1$, as well as for $S_n$. 

Finally, we obtain bounds on $t(G)$ for specific classes of graphs $G$, as provided in Table \ref{tab:graph bounds}. We denote by $P_n, C_n,$ and $W_n$, the path, cycle, and wheel graph of $n$ vertices, respectively. $\mathrm{Wd}(k, n)$ refers to the windmill graph defined in Definition \ref{def:windmill}. For $G = S_n$ and $G = S_n + \{x\}$, the value of $t(G)$ is determined. Similarly, for $G = \mathcal{E}_{2m}, G = \mathrm{Wd}(3, n)$, and $G = W_{n+1}$, the value of $t(G)$ is determined up to a difference of $1$. For $G = \mathrm{Wd}(k, n)$ with $k \geq 4$, upper and lower bounds are provided. For $G = C_n$, an upper bound is provided in Theorem \ref{maximal-path-cycle-thm} using a construction involving mixed-radix Gray codes \cite{knuth2011art}. This improves the upper bound $t(C_n) \leq 2\log_{2}(n)$, which is a consequence of a construction by Idalino and Moura \cite{thaismourastructureaware}, to $t(C_n) \leq 1.893\log_{2}(n) + \O(1)$ (see Corollary \ref{asymptotics-t(Cn)}). It further results in having upper bounds of $t(P_n)$ and $t(C_n)$ that are close to the trivial lower bound, that is, we have $\log_{2}(n) \leq t(P_n) \leq t(C_n) \leq 1.893\log_{2}(n) + \O(1)$ as $n$ grows. 

\begin{table}[h!]
\centering
\begin{threeparttable}
\begin{tabular}{|c|c|}
\hline
$G$ & $t(G)$ \\ \hline
\makecell{$S_n$, $n \geq 3$ \\ (Corollary \ref{star-optimum})} & $t(S_n) = t(1, n-1) + 1$ \\ \hline
\makecell{$S_n + \{x\}$, $n \geq 3$ \\ (Corollary \ref{star-gen-cons-special-case})} & $t(S_n + \{x\}) = t(1, n-1) + 2$ \\ \hline
\makecell{$\mathcal{E}_{2m}$, $m \geq 2$ \\ (Proposition \ref{matching})} & \makecell{\\
    $t(1, m) + 1 \leq t(\mathcal{E}_{2m}) \leq t(1, m) + 2$\\
     (the upper bound is tight for $m > \lfloor \tfrac{1}{2} \overline{(\overline{m} + 1)} \rfloor$)} \\ \hline
\makecell{$P_n, C_n, n\geq 3$ \\ (Theorem \ref{maximal-path-cycle-thm})} & $t(P_n) \leq t(C_n) \leq \begin{cases}
    3k & \text{if } n \in (2 \cdot 3^{k-1}, 3^k] \text{ for some } k \geq 1, \\
    3k + 1 & \text{if } n \in (3^k, 4 \cdot 3^{k-1}] \text{ for some } k \geq 1, \\
    3k + 2 & \text{if } n \in (4 \cdot 3^{k-1}, 2 \cdot 3^k] \text{ for some } k \geq 1.
\end{cases}$ \\ \hline
\makecell{$\mathrm{Wd}(k, n)$, $n \geq 2, k \geq 4$ \\ (Theorem \ref{windmill-cff-bounds})} & $t(1, (k-1)n) + 1 \leq t(\mathrm{Wd}(k, n)) \leq t(1, n) +  t(2, k-1) + 1$ \\ \hline

\makecell{$\mathrm{Wd}(3, n)$, $n \geq 2$ \\ (Corollary \ref{friendship})} & \makecell{ \\ $t(1, n) + 2 \leq t(\mathrm{Wd}(3, n)) \leq t(1, n) +  3$ \\ (the upper bound is tight for $n > \left\lfloor \frac{1}{2} \overline{(\overline{n} + 1)} \right\rfloor$)} \\ \hline
\makecell{$W_{n+1}, n \geq 3$ \\ (Corollary \ref{wheel})} & $\max\{t(C_n), t(C_{n+1})\} \leq t(W_{n+1}) \leq t(C_n) + 1$ \\ \hline
\end{tabular}
\caption{Bounds for $t(G)$ for specific families of graphs $G$}
\label{tab:graph bounds}
\end{threeparttable}
\end{table}

For small values of $n$, we are able to determine $t(P_n), t(C_n)$, and $t(W_n)$, as given in Proposition \ref{small-cycles-234}, Theorem \ref{small-cycles-678}, and Corollary \ref{wheel-small}. In Table \ref{tab:small-n-path-cycle-wheel}, we display the known values for $t(P_n), t(C_n)$, and $t(W_n)$ or their upper bounds for $n \leq 12$. We also display the exact values of $t(K_n) = t(2, n)$, to show how far the previous values are from the trivial upper bound.
\begin{table}[h!]
\centering
\begin{threeparttable}
\begin{tabular}{|c|c|c|c|c|}
\hline
$n$ & $t(P_n)$ & $t(C_n)$ & $t(W_n)$ & $t(K_n)$\cite{LVW} \\ \hline
$3$ & $\mathbf{3}$ & $\mathbf{3}$ & $\mathbf{3}$ & $\mathbf{3}$  \\ \hline
$4$ & $\mathbf{4}$ & $\mathbf{4}$ & $\mathbf{4}$ & $\mathbf{4}$ \\ \hline
$5$ & $\mathbf{5}$ & $\mathbf{5}$ & $\mathbf{5}$ & $\mathbf{5}$ \\ \hline
$6$ & $\mathbf{5}$ & $\mathbf{5}$ & $\mathbf{6}$ & $\mathbf{6}$ \\ \hline
$7$ & $\mathbf{6}$ & $\mathbf{6}$ & $\mathbf{6}$ & $\mathbf{7}$ \\ \hline
$8$ & $\mathbf{6}$ & $\mathbf{6}$ & $\mathbf{7}$ & $\mathbf{8}$ \\ \hline
$9$ & $\mathbf{6}$ & $\mathbf{6}$ & $\mathbf{7}$ & $\mathbf{9}$ \\ \hline
$10$ & $\mathbf{6}$ & $7$ & $\mathbf{7}$ & $\mathbf{9}$ \\ \hline
$11$ & $7$ & $7$ & 8 & $\mathbf{9}$ \\  \hline
$12$ & $7$ & $7$ & 8 & $\mathbf{9}$ \\ \hline
\end{tabular}
\caption{Upper bounds for $t(P_n)$, $t(C_n)$, $t(W_n)$, and $t(K_n)$, where the bold entries indicate exact values.}
\label{tab:small-n-path-cycle-wheel}

\end{threeparttable}
\end{table}

 The structure of the paper is as follows. Section \ref{bacground} provides the necessary background on Sperner families, cover-free families, and graph theory. Section \ref{G-sperner} focuses on $G$-Sperner families and provides all the necessary technical results before proving its main result, that is, $t_s(G) = t(1, \chi(G)$. In Section~\ref{CFFonGraphs}, we provide formal definitions of cover-free families on graphs, along with lower and upper bounds on $t(G)$, as summarized in Tables \ref{tab:lowerbound} and \ref{tab:upperbound}. In Section~\ref{Graphs-meeting-extreme-bounds}, we present infinite families of graphs $G$ for which $t(G)$ meets the extreme bounds proven in Section~\ref{CFFonGraphs}. In Section~\ref{cff-on-paths-cycles}, we provide an introduction to mixed-radix Gray codes and describe a construction of CFFs on paths and cycles using such Gray codes, which yields the upper bound in $\log_{2}(n) \leq t(P_n) \leq t(C_n) \leq 1.893 \log_{2} (n)+ \O(1)$. 
Section \ref{star-gen-cons} generalizes the construction of CFFs on star graphs provided in Section~\ref{Graphs-meeting-extreme-bounds} to obtain bounds (tight bounds in some cases) for windmill graphs and friendship graphs. 
In Section~\ref{CFFonSmallPathsCycles}, we provide the results for exact values of $t(P_n), t(C_n),$ and $t(W_n)$ for small values of $n$, as provided in Table \ref{tab:small-n-path-cycle-wheel} (in \textbf{bold}).
Conclusions and future work are provided in Section \ref{future}.

\section{Background}
\label{bacground}

Throughout the paper, for $b > a$, $[a,b]$ denotes the set $\{a,a+1,\ldots,b\}$.

\subsection{Cover-free families}

Cover-free family has been defined in Definition \ref{def:1}.
For $d < t \leq n$ be positive integers, let $\mathcal{M}$ be a $t \times n$ binary matrix. It is easy to see that $\mathcal{M}$ is the incidence matrix of a $d$-CFF$(t,n)$ if and only if  any $t \times (d+1)$ submatrix contains a permutation submatrix of order $d+1$. 

Du and Hwang \cite{DH} define $d$-disjunct and $\overline{d}$-disjunct matrices. A \emph{$d$-disjunct matrix} is a $t \times n$ binary matrix in which the union of any $d$ columns does not contain any other column, whereas a $\overline{d}$-disjunct matrix is a $t \times n$ binary matrix in which the union of any up to $d$ columns does not contain any other column. It is easy to see that a $d$-disjunct matrix is equivalent to a $\overline{d}$-disjunct matrix. The following proposition is an immediate consequence of the above definitions.

\begin{prop}
   For $d < t \leq n$, a $t \times n$ binary matrix is $d$-disjunct if and only if it is the incidence matrix of a $d$-CFF$(t, n)$. 
\end{prop}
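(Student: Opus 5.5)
The proof is a direct translation between the matrix language and the set-system language, carried out in both directions. Given a $t \times n$ binary matrix $\mathcal{M}$, I associate to it the set system $\mathcal{F} = ([1,t], \mathcal{B})$ with $\mathcal{B} = \{B_1, \dots, B_n\}$, where $B_j = \{ i : \mathcal{M}_{ij} = 1\}$ is the support of column $j$. By the definition of the incidence matrix, $\mathcal{M}$ is then exactly the incidence matrix of $\mathcal{F}$, and every incidence matrix arises this way. Under this correspondence, the Boolean sum (union) of a set of columns is the incidence vector of the union of the corresponding blocks. Also, one column containing another (coordinatewise $\leq$) means precisely that one block is a subset of the other.

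\textbf{Step 1: matching the conditions.} With this dictionary, ``the union of any $d$ columns does not contain any other column'' reads as follows: for every choice of distinct indices $i_0, i_1, \dots, i_d$, we have $B_{i_0} \nsubseteq B_{i_1} \cup \cdots \cup B_{i_d}$. This is verbatim condition \eqref{cff-eq} of Definition \ref{def:1}. Hence $\mathcal{M}$ is $d$-disjunct if and only if $\mathcal{F}$ is a $d$-CFF$(t,n)$.

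\textbf{Step 2: bookkeeping.} The parameters $|\mathcal{X}| = t$ and $|\mathcal{B}| = n$ are the matrix dimensions, and the hypothesis $d < t \leq n$ is common to both notions.

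\textbf{The one point needing care.} Definition \ref{def:1} speaks of $n$ subsets, so the blocks should be pairwise distinct, whereas a matrix could have repeated columns. I will handle this by noting that if $d \geq 1$ and two columns coincide, then one of them is contained in the union of $d$ columns that includes the other. Here $n > d$ is used, so that such a choice of $d$ other columns exists. Thus a $d$-disjunct matrix has distinct columns, and the correspondence is a genuine bijection onto $n$-element families. This is the only step that is not immediate from reading the definitions, and I expect it to be the main, if minor, obstacle.
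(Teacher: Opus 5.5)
Your proposal is correct and takes essentially the paper's approach: the paper gives no written proof and calls the proposition an immediate consequence of the definitions, which is exactly the column-support translation you describe (Boolean sum as union, coordinatewise containment as inclusion). Your extra check is sound and fills a detail the paper leaves implicit: since $n > d$, a $d$-disjunct matrix cannot have repeated columns, so it really determines $n$ distinct blocks.
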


Cover-free families generalize the concept of Sperner families. A family of subsets $\mathcal{F}$ of an $n$-set has the \textit{Sperner property} if no subset in the family is contained in any other, such a family is called a $\textit{Sperner family}$. The upper bound on the maximum size of a Sperner family on a given $n$-set was proved by Sperner~\cite{S}.

\begin{theorem}\cite{S}
\label{sperner}
    If $\mathcal{F}$ is a Sperner family on an $n$-set, then $|\mathcal{F}| \leq \binom{n}{\floor{\frac{n}{2}}}$. The upper bound is only achieved by the set of all $\floor{\frac{n}{2}}$-subsets and by the set of all $\ceil{\frac{n}{2}}$-subsets of the $n$-set.
\end{theorem}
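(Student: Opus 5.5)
The plan is to prove Sperner's theorem by the Lubell--Yamamoto--Meshalkin (LYM) inequality, and then read off the equality case from the equality analysis of that inequality.

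\textbf{Counting chains.} Let $\mathcal{F}$ be a Sperner family on $[1,n]$, and consider the $n!$ maximal chains $\emptyset \subset \{x_1\} \subset \{x_1,x_2\} \subset \cdots \subset [1,n]$, one for each permutation $(x_1,\ldots,x_n)$. A set $A$ with $|A|=k$ lies on exactly $k!(n-k)!$ of these chains. Since $\mathcal{F}$ is an antichain, each maximal chain meets $\mathcal{F}$ in at most one set. Double counting therefore gives
$$\sum_{A\in\mathcal{F}} |A|!\,(n-|A|)! \leq n!, \qquad\text{that is,}\qquad \sum_{A\in\mathcal{F}} \binom{n}{|A|}^{-1} \leq 1.$$
Because $\binom{n}{k} \leq \binom{n}{\floor{\frac{n}{2}}}$ for every $k$, each term is at least $\binom{n}{\floor{\frac{n}{2}}}^{-1}$. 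Hence $|\mathcal{F}| \leq \binom{n}{\floor{\frac{n}{2}}}$.

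\textbf{Equality case.} If $|\mathcal{F}| = \binom{n}{\floor{\frac{n}{2}}}$, both inequalities above must be tight.
\begin{itemize}
\item Tightness of the binomial comparison forces every $A\in\mathcal{F}$ to satisfy $\binom{n}{|A|} = \binom{n}{\floor{\frac{n}{2}}}$. This means $|A| \in \{\floor{\frac{n}{2}}, \ceil{\frac{n}{2}}\}$.
\item For $n$ even these two sizes coincide, so $\mathcal{F}$ is exactly the family of all $\frac{n}{2}$-subsets.
\item For $n$ odd, write $n = 2m+1$. Then $\mathcal{F}$ consists of $m$-sets and $(m+1)$-sets, and tightness of the LYM sum means that every maximal chain passes through exactly one member of $\mathcal{F}$.
\end{itemize}

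\textbf{The odd case, which is the main obstacle.} We must rule out a genuine mixture of $m$-sets and $(m+1)$-sets. Suppose $\mathcal{F}$ contains some $m$-set $A$ and some $(m+1)$-set. I would first show that the bipartite "inclusion graph" between $m$-sets and $(m+1)$-sets is connected. This holds because any two $m$-sets are joined by a sequence of single-element swaps $A \to A\cup\{y\} \to A\cup\{y\}\setminus\{x\}$. By connectivity, there is then an $m$-set $B\in\mathcal{F}$ and an $(m+1)$-set $C \supset B$ with $C\notin\mathcal{F}$, or an $m$-set $B\notin\mathcal{F}$ below an $(m+1)$-set $C\in\mathcal{F}$. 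The two cases are symmetric; take the first.

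Since $C\notin\mathcal{F}$, and $C$ already contains $B\in\mathcal{F}$ and $\mathcal{F}$ is an antichain, we need a chain through $C$ that misses $\mathcal{F}$. The reasoning runs as follows.
\begin{itemize}
\item Every maximal chain through $C$ must hit $\mathcal{F}$, and it can only do so at an $m$-subset of $C$.
\item Hence every $m$-subset of $C$ lies in $\mathcal{F}$.
\item Symmetrically, every $(m+1)$-superset $D$ of such an $m$-set that is not in $\mathcal{F}$ propagates the same property.
\item Following these propagations through the connected inclusion graph shows that every $m$-set lies in $\mathcal{F}$.
\item This contradicts the presence of an $(m+1)$-set in $\mathcal{F}$, since that set would contain an $m$-set of $\mathcal{F}$.
\end{itemize}

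Carrying out this propagation cleanly is the delicate step. A cleaner alternative I would try first is to compare the shadow and the shade. Let $\mathcal{F}_{m+1}$ be the $(m+1)$-sets of $\mathcal{F}$, and let $\partial\mathcal{F}_{m+1}$ be its shadow, the set of all $m$-subsets of its members. The antichain condition forces $\mathcal{F}_m \cap \partial\mathcal{F}_{m+1} = \emptyset$, so
$$|\mathcal{F}| \leq |\mathcal{F}_{m+1}| + \binom{n}{m} - |\partial\mathcal{F}_{m+1}|.$$
The counting bound $|\partial\mathcal{F}_{m+1}| \geq |\mathcal{F}_{m+1}|$ has equality only when $\mathcal{F}_{m+1}$ is empty or consists of all $(m+1)$-sets, again by connectivity. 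This settles the equality case and completes the theorem.
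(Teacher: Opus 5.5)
The paper does not prove this statement. It quotes Sperner's theorem from \cite{S} and uses it as a black box, so there is no internal argument to compare with, and your LYM proof has to stand on its own. It does: it is correct and complete.

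The chain double count gives $\sum_{A\in\mathcal{F}}\binom{n}{|A|}^{-1}\le 1$. Equality $|\mathcal{F}|=\binom{n}{\lfloor n/2\rfloor}$ forces both the binomial comparison and the chain count to be tight. For $n=2m+1$, your propagation argument does rule out mixtures of $m$-sets and $(m+1)$-sets.

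That step is less delicate than you suggest. You should drop the sentence about needing ``a chain through $C$ that misses $\mathcal{F}$'', since no such chain exists. The clean version is this:
\begin{itemize}
\item Take any $m$-set $B$ and $(m+1)$-set $C$ with $B\subset C$, and a maximal chain through both.
\item The chain meets $\mathcal{F}$ exactly once, and its only members in levels $m$ and $m+1$ are $B$ and $C$. So exactly one of $B$, $C$ lies in $\mathcal{F}$.
\item Hence membership in $\mathcal{F}$ is a proper $2$-colouring of the connected bipartite middle-layer graph.
\item Such a colouring is unique up to swapping sides, which yields exactly the two extremal families.
\end{itemize}

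Your shadow alternative is also sound, and closer in spirit to Sperner's original shadow-pushing proof. You do need to spell out one point: equality in $(m+1)|\mathcal{F}_{m+1}|\le(m+1)|\partial\mathcal{F}_{m+1}|$ means every $m$-set of the shadow has all $m+1$ of its supersets in $\mathcal{F}_{m+1}$. Then $\mathcal{F}_{m+1}\cup\partial\mathcal{F}_{m+1}$ is closed under adjacency, and connectivity applies.

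As a comparison of what each approach buys: a symmetric chain decomposition of $\mathcal{P}([1,n])$ into $\binom{n}{\lfloor n/2\rfloor}$ chains, the kind of partition the paper invokes in the proof of Theorem \ref{sperner-clique-coloring}, gives the inequality at once. It says nothing about which families attain it. The LYM route gives you the equality characterization almost for free, through tightness of the chain count.
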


Thus, a $1$-CFF is equivalent to a Sperner family. Sperner's theorem \cite{S} gives an optimal construction for $1$-CFF which can be done by taking $n$ distinct sets out of all $ \floor{\frac{t}{2}}$-subsets of a $t$-set for the smallest possible $t$. Thus, for any $n \geq 2$, 
\begin{equation}
\label{max-sperner-family}
    t(1, n) = \min\left\{t : \binom{t}{\floor{\frac{t}{2}}}  \geq n \right\}.
\end{equation}
Equation \ref{max-sperner-family} implies that $t(1, n) = t(1, n-1) + 1$ if $n = \binom{x}{\lfloor{\frac{x}{2}}\rfloor} + 1$ for some $x \in \mathbb{N}$, and $t(1, n) = t(1, n-1)$, otherwise. For $d \geq 2$, the best known lower bound and upper bound on $t$ is given by $$c_1 \frac{d^2}{\log_{2}(d)} \log_{2}(n) \le t(d, n) \le c_2 d^2 \log_{2}(n),$$ for some constants $c_1$ and $c_2$. The value of the constant $c_1$ is proven to be approximately $\frac{1}{2}$, $\frac{1}{4}$ and $\frac{1}{8}$ in \cite{DR}, \cite{F}, and \cite{R} respectively. Furthermore, the existence of $d$-CFFs based on the probabilistic method produce the best known upper bounds on $t(d, n)$ and derandomization techniques can be used to obtain polynomial time algorithms to construct a $d$-CFF$(t, n)$ with $t = \Theta(d^2 \log_{2}n)$ \cite{bshouty2015linear, gargano2018low, PR}. 
In this paper, we focus on generalizations of $2$-CFFs. Erdös, Frankl, and Füredi \cite{erdos1982families} provided the following lower and upper bounds on $t(2, n)$:
    $$3.106\log_{2}(n) < t(2,n) < 5.512\log_{2}(n).$$
Table \ref{tab:t2n-small} gives upper bounds on $t(2, n)$ for small values of $n$ \cite{IM2,LVW} and the bold entries indicate the bound is exact.
\begin{table}[h!]
\centering
\begin{tabular}{|c|c|c|c|c|c|c|c|c|c|c|c|c|c|c|c|c|}
\hline
$n$ & $\mathbf{3 \leq y \leq 8}$ & $\mathbf{12}$ & $\mathbf{13}$ & $\mathbf{17}$ & $20$ & $26$ & $28$ & $42$ & $48$ & $68$ & $69$ & $76$ & $90$ & $120$ & $176$ & $253$ \\
\hline
$t(2, n)$ & $\mathbf{y}$ & $\mathbf{9}$ & $\mathbf{10}$ & $\mathbf{11}$ & $12$ & $13$ & $14$ & $15$ & $16$ & $17$ & $18$ & $19$ & $20$ & $21$ & $22$ & $23$ \\
\hline
\end{tabular}
\caption{Upper bounds of $t(2, n)$ for small values of $n$}
\label{tab:t2n-small}
\end{table}

\subsection{More on 1-CFFs}
\label{more-1-CFFs}


This section provides auxiliary results on Sperner families for subsequent use in the paper. The following technical propositions are used later to prove Proposition \ref{matching}, Theorem \ref{matching-sperner}, and Corollary \ref{friendship}. 

\begin{prop}
\label{sperner-technical}
    For $k \geq 2$, $t\left(1, \left\lfloor \frac{1}{2} \binom{2k+1}{k} \right\rfloor \right) = 2k$.
\end{prop}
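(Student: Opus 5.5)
Write $n = \lfloor \frac{1}{2}\binom{2k+1}{k}\rfloor$. By Equation \ref{max-sperner-family}, $t(1,n)$ is the least $t$ with $\binom{t}{\lfloor t/2\rfloor} \geq n$, and the central binomial coefficients $\binom{t}{\lfloor t/2\rfloor}$ are nondecreasing in $t$. So it suffices to establish two inequalities:
\begin{equation*}
\binom{2k-1}{k-1} < n \leq \binom{2k}{k}.
\end{equation*}
The second gives $t(1,n)\leq 2k$. The first, together with monotonicity, shows that every $t \leq 2k-1$ fails, so $t(1,n) \geq 2k$.

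The key tool is Pascal's rule, which relates the three central binomials involved. We have
\begin{equation*}
\binom{2k+1}{k} = \binom{2k}{k} + \binom{2k}{k-1},
\end{equation*}
and since $\binom{2k}{k-1} < \binom{2k}{k}$, it follows that $\tfrac12\binom{2k+1}{k} < \binom{2k}{k}$. Taking floors gives $n \leq \binom{2k}{k}$, which is the upper inequality.

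For the lower inequality, I would use $\binom{2k}{k} = 2\binom{2k-1}{k-1}$. This yields
\begin{equation*}
\binom{2k+1}{k} = \frac{2k+1}{k+1}\binom{2k}{k} = \frac{2(2k+1)}{k+1}\binom{2k-1}{k-1}.
\end{equation*}
Hence
\begin{equation*}
\tfrac12\binom{2k+1}{k} = \frac{2k+1}{k+1}\binom{2k-1}{k-1} = \binom{2k-1}{k-1} + \frac{k}{k+1}\binom{2k-1}{k-1}.
\end{equation*}
For $k\geq 2$, the excess term satisfies $\frac{k}{k+1}\binom{2k-1}{k-1} \geq \frac{2}{3}\cdot 3 = 2 \geq 1$. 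Therefore, even after taking the floor, $n \geq \binom{2k-1}{k-1}+1 > \binom{2k-1}{k-1}$.

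The only delicate point is the floor in the lower bound: I must check that the excess over $\binom{2k-1}{k-1}$ is at least $1$ and not merely positive. This is exactly where the hypothesis $k \geq 2$ is needed. For $k=1$ the excess is $\tfrac12$, and indeed $n = 1$ there, so the claim fails. A quick sanity check at $k=2$ confirms the boundary: $n = 5$, and $\binom{3}{1}=3<5\leq 6=\binom{4}{2}$, so $t(1,5)=4$.
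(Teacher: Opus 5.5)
Your proof is correct and follows essentially the paper's route: you place $n$ strictly above $\binom{2k-1}{k-1}$ and at most $\binom{2k}{k}$, using Pascal's rule and $\binom{2k}{k}=2\binom{2k-1}{k-1}$; your excess term $\frac{k}{k+1}\binom{2k-1}{k-1}$ is exactly the paper's $\frac12\binom{2k}{k-1}$, which the paper bounds below by $2$ via $\binom{2k}{k-1}\geq 4$. Your explicit appeal to the monotonicity of the central binomial coefficients spells out a step the paper leaves implicit.
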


\begin{proof}

Since $\binom{2k + 1}{k} = \binom{2k + 1}{k + 1} = \frac{2k + 1}{k + 1} \binom{2k}{k} < 2 \binom{2k}{k}$, we have $\left \lfloor \frac{1}{2} \binom{2k + 1}{k} \right \rfloor \leq \frac{1}{2} \binom{2k + 1}{k} < \binom{2k}{k}$. This implies $t \left(1, \left \lfloor \frac{1}{2} \binom{2k + 1}{k} \right \rfloor \right) \leq 2k$. It remains to show that $t \left(1, \left \lfloor \frac{1}{2} \binom{2k + 1}{k} \right \rfloor \right) > 2k-1$. Since $\binom{2k + 1}{k} = \binom{2k}{k} + \binom{2k}{k-1}$ and $\binom{2k}{k} = \frac{2k}{k} \binom{2k - 1}{k - 1} = 2 \binom{2k - 1}{k - 1}$, we have $\binom{2k + 1}{k} = 2\binom{2k - 1}{k - 1} + \binom{2k}{k - 1}$. In addition, since $k \geq 2$, this implies $2k > k - 1$ and $\binom{2k}{k - 1} \geq 4$. Thus, $\left \lfloor \frac{1}{2} \binom{2k + 1}{k} \right \rfloor = \left \lfloor \frac{1}{2} \left( 2 \binom{2k - 1}{k - 1} + \binom{2k}{k - 1} \right) \right \rfloor = \left \lfloor  \binom{2k - 1}{k - 1} + \frac{1}{2} \binom{2k}{k - 1} \right \rfloor = \binom{2k - 1}{k - 1} + \left \lfloor \frac{1}{2} \binom{2k}{k - 1} \right \rfloor$. Since $\frac{1}{2} \binom{2k}{k - 1} \geq 2$, we have $\left \lfloor \frac{1}{2} \binom{2k + 1}{k} \right \rfloor \geq \binom{2k - 1}{k - 1} + 2$. Thus, we conclude by Equation \ref{max-sperner-family}, $t \left(1, \left \lfloor \frac{1}{2} \binom{2k + 1}{k} \right \rfloor \right) > 2k-1$. 
\end{proof}

\begin{lemma}
\label{sperner-odd-odd+1}
    If $t(1,n) = 2k + 1$ for some $k \geq 1$, then $t(1, 2n) = 2k + 2$.
\end{lemma}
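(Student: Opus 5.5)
By Equation \ref{max-sperner-family}, the hypothesis $t(1,n)=2k+1$ means exactly that
\[
\binom{2k}{k} < n \leq \binom{2k+1}{k}.
\]
The plan is to translate the goal $t(1,2n)=2k+2$ into the same kind of window, namely
\[
\binom{2k+1}{k} < 2n \leq \binom{2k+2}{k+1},
\]
and then check both inequalities from the hypothesis using binomial identities.

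The upper inequality is immediate. By Pascal's rule, $\binom{2k+2}{k+1}=\binom{2k+1}{k}+\binom{2k+1}{k+1}=2\binom{2k+1}{k}$. Hence $2n\le 2\binom{2k+1}{k}=\binom{2k+2}{k+1}$, which gives $t(1,2n)\le 2k+2$.

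For the lower inequality I need $2n>\binom{2k+1}{k}$. Since $2n > 2\binom{2k}{k}$, it suffices to show $2\binom{2k}{k}\ge \binom{2k+1}{k}$. The proof of Proposition \ref{sperner-technical} already gives
\[
\binom{2k+1}{k}=\frac{2k+1}{k+1}\binom{2k}{k}<2\binom{2k}{k},
\]
and this holds for every $k\ge1$, not only $k\ge 2$. So $2n>2\binom{2k}{k}>\binom{2k+1}{k}$, i.e.\ $\binom{2k+1}{\lfloor (2k+1)/2\rfloor}<2n$. By Equation \ref{max-sperner-family} this forces $t(1,2n)>2k+1$.

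One point needs care for the lower bound: Equation \ref{max-sperner-family} minimizes over $t$ with the central binomial coefficient at least the target. So I must confirm that every $t\le 2k+1$ fails, not just $t=2k+1$. This follows because $\binom{t}{\lfloor t/2\rfloor}$ is nondecreasing in $t$, so $\binom{t}{\lfloor t/2\rfloor}\le \binom{2k+1}{k}<2n$ for all $t\le 2k+1$. Combining the two bounds gives $t(1,2n)=2k+2$. There is no real obstacle here; the only step needing attention is keeping the floor in the central binomial coefficient straight for the odd index $2k+1$, where $\lfloor (2k+1)/2\rfloor=k$.
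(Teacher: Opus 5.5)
Your proof is correct and follows the paper's argument: both reduce the claim to $\binom{2k+1}{k} < 2n \le \binom{2k+2}{k+1}$, using $\binom{2k+2}{k+1} = 2\binom{2k+1}{k}$ for the upper bound and $2n \ge 2\binom{2k}{k}+2 > \binom{2k+1}{k}$ for the lower bound. The paper gets $2\binom{2k}{k} > \binom{2k+1}{k}$ from Pascal's rule together with $\binom{2k}{k} > \binom{2k}{k-1}$ rather than from $\frac{2k+1}{k+1} < 2$. Your explicit monotonicity remark also makes the appeal to Equation~\ref{max-sperner-family} slightly more careful than the paper's.
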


\begin{proof}
If $t(1, n) = 2k + 1$, then by Equation \ref{max-sperner-family}, $\binom{2k}{k} < n \leq \binom{2k + 1}{k}$. This implies $2n \leq 2 \binom{2k + 1}{k} = \frac{2k + 2}{k + 1} \binom{2k + 1}{k} = \binom{2k + 2}{k + 1}$. Since $n \geq \binom{2k}{k} + 1$, we have $2n \geq 2 \binom{2k}{k} + 2 > \binom{2k}{k} + \binom{2k}{k - 1} + 2 > \binom{2k + 1}{k} + 2$. Thus, $\binom{2k + 1}{k} < 2n \leq \binom{2k + 2}{k + 1}$. This implies $t(1, 2n) = 2k + 2$.
\end{proof}

\begin{lemma}
\label{sperner-even}
   Suppose $t(1, n) = 2k$ for $k \geq 2$.Then,
   \begin{enumerate}
       \item \label{sperner-even-+1} If $n \in \left[\binom{2k-1}{k} + 1, \left\lfloor \frac{1}{2} \binom{2k+1}{k} \right\rfloor \right]$, then $t(1, 2n) = t(1, n) + 1$.
       \item \label{sperner-even-+2} If $n \in \left[\left\lfloor \frac{1}{2} \binom{2k+1}{k} \right\rfloor + 1, \binom{2k}{k} \right]$, then $t(1, 2n) = t(1, n) + 2$.
   \end{enumerate}
\end{lemma}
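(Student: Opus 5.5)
We use Equation \ref{max-sperner-family}: $t(1,m)$ is the least $t$ with $\binom{t}{\floor{t/2}} \geq m$. Write $c_t = \binom{t}{\floor{t/2}}$. Then $t(1,m)=t$ exactly when $c_{t-1} < m \leq c_t$. The hypothesis $t(1,n) = 2k$ means
\[
c_{2k-1} = \binom{2k-1}{k} < n \leq \binom{2k}{k} = c_{2k}.
\]
So each part reduces to locating $2n$ between consecutive central binomial coefficients. The two relevant ones are
\[
c_{2k+1} = \binom{2k+1}{k}, \qquad c_{2k+2} = \binom{2k+2}{k+1}.
\]

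For part \ref{sperner-even-+1}, we show $c_{2k} < 2n \leq c_{2k+1}$. The upper bound is immediate: $n \leq \floor{\frac12 \binom{2k+1}{k}}$ gives $2n \leq \binom{2k+1}{k}$. For the lower bound, $n \geq \binom{2k-1}{k}+1$ gives $2n \geq 2\binom{2k-1}{k} + 2$. Since $\binom{2k}{k} = 2\binom{2k-1}{k-1} = 2\binom{2k-1}{k}$, as already used in the proof of Proposition \ref{sperner-technical}, this yields $2n \geq \binom{2k}{k}+2 > c_{2k}$. Hence $t(1,2n) = 2k+1 = t(1,n)+1$.

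For part \ref{sperner-even-+2}, we show $c_{2k+1} < 2n \leq c_{2k+2}$. For the lower bound, $n \geq \floor{\frac12\binom{2k+1}{k}}+1 > \frac12 \binom{2k+1}{k}$, so $2n > \binom{2k+1}{k}$. For the upper bound, $n \leq \binom{2k}{k}$ gives $2n \leq 2\binom{2k}{k}$. It remains to check $2\binom{2k}{k} \leq \binom{2k+2}{k+1}$. We have
\[
\binom{2k+2}{k+1} = \frac{(2k+2)(2k+1)}{(k+1)^2}\binom{2k}{k} = \frac{2(2k+1)}{k+1}\binom{2k}{k},
\]
and $\frac{2(2k+1)}{k+1} \geq 2$ always, with strict inequality for $k \geq 1$. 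Hence $t(1,2n) = 2k+2 = t(1,n)+2$.

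The intervals are nonempty because $k \geq 2$; in particular Proposition \ref{sperner-technical} shows $\floor{\frac12\binom{2k+1}{k}}$ lies strictly between $c_{2k-1}$ and $c_{2k}$. This is not strictly needed for the implications, only so the two cases together cover all $n$ with $t(1,n)=2k$. No step is a real obstacle. The only care needed is the floor in part \ref{sperner-even-+2}: the strict inequality $2n > \binom{2k+1}{k}$ must come from $n \geq \floor{x}+1 > x$ rather than from $n > \floor{x}$.
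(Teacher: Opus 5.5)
Your proof is correct and takes essentially the paper's route: locate $2n$ between consecutive central binomial coefficients via Equation \ref{max-sperner-family}, using $\binom{2k}{k} = 2\binom{2k-1}{k-1}$ for the lower bound in part (1) and $2\binom{2k}{k} \le \binom{2k+2}{k+1}$ for the upper bound in part (2). The only difference is cosmetic: the paper reduces to the interval endpoints and splits on the parity of $\binom{2k+1}{k}$ to handle the floor, whereas you use $2\lfloor x/2\rfloor \le x$ and $\lfloor x/2\rfloor + 1 > x/2$ directly, which is slightly cleaner.
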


\begin{proof}
If $t(1, n) = 2k$, then by Equation \ref{max-sperner-family}, $n \in \left[\binom{2k-1}{k-1} + 1, \binom{2k}{k}\right]$. This implies $2 \binom{2k-1}{k-1} + 2 \leq 2n \leq 2 \binom{2k}{k}$. Thus, $2n \geq 2 \binom{2k-1}{k-1} + 2 = \frac{2k}{k} \binom{2k - 1}{k - 1} + 2 = \binom{2k}{k} + 2$. Hence, $t(1, 2n) > 2k$. Now we provide a proof for items \ref{sperner-even-+1} and \ref{sperner-even-+2}. 

\begin{enumerate}
    \item It is enough to show that $t\left(1, 2\left(\binom{2k-1}{k} + 1\right)\right) = 2k + 1$ and $t\left(1, 2\left\lfloor \frac{1}{2} \binom{2k+1}{k} \right\rfloor\right) = 2k+1$. First, $t\left(1, 2\left(\binom{2k-1}{k} + 1\right)\right) = 2k + 1$ since $2 \binom{2k-1}{k} + 2 > 2 \binom{2k-1}{k} + 1 = \binom{2k}{k} + 1$. Let $x = 1$ if $\binom{2k + 1}{k}$ is odd and $x = 0$ if $\binom{2k + 1}{k}$ is even. Then, $\left\lfloor \frac{1}{2} \binom{2k+1}{k} \right\rfloor = \frac{\binom{2k + 1}{k} - x}{2}$. Thus, $t\left(1, 2 \left\lfloor \frac{1}{2} \binom{2k+1}{k} \right\rfloor \right) = t\left(1, \binom{2k + 1}{k} - 1 \right)$ if $\binom{2k + 1}{k}$ is odd and $t\left(1, 2 \left\lfloor \frac{1}{2} \binom{2k+1}{k} \right\rfloor \right) = t\left(1, \binom{2k + 1}{k} \right)$ if $\binom{2k + 1}{k}$ is even. In both cases, it follows by Equation \ref{max-sperner-family} that $t\left(1, 2 \left\lfloor \frac{1}{2} \binom{2k+1}{k} \right\rfloor \right) = 2k + 1$. 

    \item It is enough to show that $2k + 2 \leq t\left(1, 2\left(\left\lfloor \frac{1}{2} \binom{2k+1}{k} \right\rfloor + 1\right)\right) \leq t\left(1, 2 \binom{2k}{k}\right) = 2k + 2$. First, $\left\lfloor \frac{1}{2} \binom{2k+1}{k} \right\rfloor + 1 = \frac{\binom{2k + 1}{k} - 1}{2} + 1$ if $\binom{2k + 1}{k}$ is odd and $\left\lfloor \frac{1}{2} \binom{2k+1}{k} \right\rfloor + 1 = \frac{1}{2} \binom{2k + 1}{k} + 1$ if $\binom{2k + 1}{k}$ is even. Then, $t\left(1, 2\left\lfloor \frac{1}{2} \binom{2k+1}{k} \right\rfloor + 2 \right) = t\left(1, \binom{2k + 1}{k} + 1\right)$ if $\binom{2k + 1}{k}$ is odd and $t\left(1, 2\left\lfloor \frac{1}{2} \binom{2k+1}{k} \right\rfloor + 2 \right) = t\left(1, \binom{2k + 1}{k} + 2 \right)$ if $\binom{2k + 1}{k}$ is even. In both cases, it follows by Equation \ref{max-sperner-family} that $t\left(1, 2 \left\lfloor \frac{1}{2} \binom{2k+1}{k} \right\rfloor + 2 \right) \geq 2k + 2$. Also, $t\left(1, 2 \binom{2k}{k}\right) = 2k + 2$ as $\binom{2k+2}{k+1} = 2\binom{2k+1}{k} = 2\binom{2k+1}{k+1} > 2\binom{2k}{k} > \frac{2k+1}{k+1}\binom{2k}{k} = \binom{2k+1}{k+1}$. 
 
\end{enumerate}

\end{proof}

Lemmas \ref{sperner-odd-odd+1} and \ref{sperner-even} can be summarized as follows. 

\begin{prop}
\label{more-sperner-doubling}
  For $n \geq 2$, let $\overline{n} = \min \{ \binom{x}{\floor{\frac{x}{2}}} : x \in \mathbb{N}, \binom{x}{\floor{\frac{x}{2}}} \geq n\}$. If $n \leq \lfloor \tfrac{1}{2} \overline{(\overline{n} + 1)} \rfloor$, then $t(1, 2n) = t(1, n) + 1$.
 Otherwise, $t(1, 2n) = t(1, n) + 2$.  
\end{prop} 

\begin{proof}
 Let $x = t(1, n)$. Then, $n \in \left[\binom{x-1}{\floor{\frac{x-1}{2}}} + 1, \binom{x}{\floor{\frac{x}{2}}} \right],$ $\overline{n} = \binom{x}{\floor{\frac{x}{2}}}.$ This implies $\overline{n} + 1 = \binom{x}{\floor{\frac{x}{2}}} + 1$, so $\overline{\overline{n} + 1}  = \binom{x+1}{\floor{\frac{x+1}{2}}}$. We can assume $n \geq 3$, since for $n = 2$, the result follows as $t(1, 4) = 4$ and $t(1, 2) = 2$. We consider two different cases depending on the parity of $x$.
 
\begin{enumerate}
    \item[\textbf{Case 1}] $x = 2k + 1$ for some $k \geq 1$. In this case,
$n \leq \binom{x}{\floor{\frac{x}{2}}} = \binom{2k+1}{k} = \left\lfloor \frac{1}{2} \frac{2k+2}{k+1} \binom{2k+1}{k} \right\rfloor = \left\lfloor \frac{1}{2} \binom{2k+2}{k+1} \right\rfloor = \left\lfloor \frac{1}{2} \binom{x+1}{\floor{\frac{x+1}{2}}} \right\rfloor = \lfloor \tfrac{1}{2} \overline{(\overline{n} + 1)} \rfloor$. In addition, since $t(1, n) = 2k+1$, by Lemma \ref{sperner-odd-odd+1}, we obtain $t(1, 2n) = t(1, n) + 1$.
 

    \item[\textbf{Case 2}] $x = 2k$ for some $k \geq 2$. In this case, $\lfloor \tfrac{1}{2} \overline{(\overline{n} + 1)} \rfloor = \left\lfloor \frac{1}{2} \binom{x+1}{\floor{\frac{x+1}{2}}} \right\rfloor = \left\lfloor \frac{1}{2} \binom{2k+1}{\floor{\frac{2k+1}{2}}} \right\rfloor = \left\lfloor \frac{1}{2} \binom{2k+1}{k} \right\rfloor$. The result then follows by Lemma \ref{sperner-even}.
\end{enumerate}

\end{proof}

\subsection{Graph Theory Terminology} 

A \textit{graph} $G = (V(G), E(G))$ consists of a non-empty finite set $V(G)$ of elements called \textit{vertices} and a set $E(G)$ of $1$-subsets or $2$-subsets of $V(G)$ called \textit{edges}. An edge $e$ with $|e| = 1$ is called a \emph{loop} and an edge $e$ with $|e| = 2$ is called a \emph{proper edge}. An edge $e$ is \emph{incident} to a vertex $v$ if $v \in e$. If $G$ has no loops, then $G$ is a \emph{simple graph}. A graph $G$ is \emph{complete} if $G$ is simple and $|E(G)| = \binom{|V(G)|}{2}$. A complete graph with $n$ vertices is denoted by $K_{n}$. A graph $G' = (V', E')$ is a \textit{subgraph}  of a graph $G = (V, E)$ if  $V' \subseteq V, E' \subseteq E,$ and $E' \subseteq E \cap \mathcal{P}(V')$. We write $G' \subseteq G$. The subgraph of $G$ \emph{induced}  by $V'$, denoted by $G[V']$, is the graph $(V', E')$ such that $E' = E \cap \mathcal{P}(V')$. For some $I \subset V$, we denote by $G-I$ the subgraph induced by $V(G) \setminus I$, that is, $G - I = G[V - I]$. A graph is \emph{bipartite} if its vertex set  $V$ can be partitioned into two disjoint subsets $U$ and $W$ such that every edge has a vertex in $U$ and a vertex in $W$ and it is a \emph{complete bipartite} graph if  for every vertex $x$ in $U$ and $y$ in $W$, there exists an edge $e$ such that $e = \{x, y\}$.

A graph with $n$ vertices is a \emph{path}, denoted by $P_n$, if its vertices can be ordered $(v_1, v_2, \ldots, v_n)$ where $E(P_n) = \{ \{v_i, v_{i+1}\} : i \in [1, n-1]\}.$ A graph with $n$ vertices, denoted by $C_n$, is a \emph{cycle} if its vertices can be ordered $(v_1, v_2, \cdots, v_n)$ where $E(C_n) = \{ \{v_i, v_{i+1}\} : i \in \mathbb{Z}_n \}.$ A graph $G$ is \emph{Hamiltonian} if it has a cycle subgraph $C_n$ such that $V(G) = V(C_n)$. A graph is a \emph{tree} if between any pair of distinct vertices there exists a unique path.

The \textit{degree} of a vertex $v$ in a graph $G$, denoted by $\deg(v)$, is the number of edges incident to $v$. The \emph{minimum degree} of $G$, denoted by $\delta(G)$, is the smallest degree of a vertex in $G$, and the \emph{maximum degree}, denoted by $\Delta(G)$, is the largest degree of a vertex in $G$. A vertex $v \in V(G)$ is \emph{isolated} if $deg(v) = 0$, otherwise it is \emph{non-isolated}. 

A \textit{$k$-vertex coloring} is a surjective function $f : V(G) \rightarrow \mathbb{Z}_k$ such that for every edge $\{a,b\}, f(a) \neq f(b)$. The \textit{chromatic number} of a graph $G$, denoted by $\chi(G)$, is the smallest integer $k$ such that a $k$-vertex coloring of $G$ exists. A \textit{clique} in a graph $G$ is a subset of vertices  $C \subseteq V(G)$  such that $G[C] = K_{|C|}$; a \textit{maximum clique} is a largest such clique and its size is called the \textit{clique number}, denoted by $\omega(G)$. A \textit{graph homomorphism} from $G$ to $H$ is a mapping $\phi: V(G) \to V(H)$ such that for every edge $\{u, v\} \in E(G), \{ \phi(u), \phi(v) \} \in E(H)$. If there exists a homomorphism from $G$ to $H$, we write it as $G \rightarrow H$. It is well-known that $K_{\omega(G)} \to G \to K_{\chi(G)}$ \cite{hell2026graphs}. 

For graphs $G_1$ and $G_2$, the Cartesian product of $G_1$ and $G_2$ (denoted by $G_1 \square G_2$) is a graph with $V(G_1 \square G_2) = V(G_1) \times V(G_2)$ and

\[
\begin{aligned}
E(G_1 \square G_2)
  &= \Bigl\{
       \{(a,b_1),(a,b_2)\} :
       a \in V(G_1),\ 
       \{b_1,b_2\} \in E(G_2)
     \Bigr\} \\
  &\quad\cup
     \Bigl\{
       \{(a_1,b),(a_2,b)\} :
       \{a_1,a_2\} \in E(G_1),\ 
       b \in V(G_2)
     \Bigr\}.
\end{aligned}
\]
It is well-known that if $G_1$ and $G_2$ are Hamiltonian, then $G_1 \square G_2$ is also Hamiltonian \cite{imrich2008topics}. Similarly, for graphs $G_1, G_2, \ldots, G_k$, $G_1 \square G_2 \square G_3 \square \cdots \square G_k$ is a graph with $$V(G_1 \square G_2 \square G_3 \square \cdots \square G_k) = V(G_1) \times V(G_2) \times V(G_3) \times \cdots \times V(G_k)$$ and $\{(x_1, x_2, \ldots, x_k), (y_1, y_2, \ldots, y_k) \} \in E(G_1 \square G_2 \square G_3 \square \cdots \square G_k)$ if and only if there exists $i \in [1, k]$ such that $x_i \neq y_i$ and $\{x_i, y_i\} \in E(G_i)$, and for all $j \in [1, n] \setminus \{i\}$, $x_j = y_j$. 

\section{$G$-Sperner set system}
\label{G-sperner}
Definition \ref{def:G-sperner} is the definition of $G$-Sperner set systems for a graph $G$. Definition \ref{def:G-Sperner-matrix} is an equivalent one, using the incidence matrix of the set system.

\begin{definition}\textbf{(via set system)}
\label{def:G-sperner}
  Let $n, t$ be positive integers with $n \geq t$, and let $G = ([1, n], E(G))$ be a graph. Let $\mathcal{F} = ([1,t], \mathcal{B})$ be a set system with $\mathcal{B} = \{B_1, B_2, \ldots, B_n\}$ and $|\mathcal{B}| = n$ where $B_i$ corresponds to the vertex $i$. Then, $\mathcal{F}$ is \emph{Sperner for a proper edge $ \{a, b\} \in E(G)$} if \begin{center}
        $|B_a \setminus B_b| \geq 1 \text{ and } |B_b \setminus B_a| \geq 1$.
    \end{center}  
The set system $\mathcal{F}$ is \emph{$G$-Sperner}, denoted $G$-Sperner$(t, n)$, if it is Sperner for every proper edge in $E(G)$.
\end{definition}

\begin{definition}\textbf{(via incidence matrix)}
\label{def:G-Sperner-matrix}
     Let $G = ([1, n], E(G))$ be a graph, and let $A$ be a $t \times n$ binary matrix where each column corresponds to a vertex of $G$. We denote by $A[i, j]$ the entry of $A$ in row $i$ and the column corresponding to the vertex $j \in [1, n]$, and by $A^{j}$ the column vector of $A$ corresponding to the vertex $j$. Then, Matrix $A$ is \emph{Sperner for a proper edge $\{a, b\}$} if there exist row indices $l, m \in [1, t]$ such that $A[l, a] = 0$ and $A[l, b] = 1$, and $A[m, a] = 1$ and $A[m, b] = 0$. Matrix $A$ is \emph{$G$-Sperner} if it is Sperner for every proper edge in $E(G)$.
\end{definition}

It is easy to verify the equivalence given in the following proposition.

\begin{prop} 
Let $G = ([1, n], E(G))$ be a graph and $\mathcal{F} = ([1, t], \mathcal{B})$ be a set system where $|\mathcal{B}| = n$. Then, $\mathcal{F}$ is a $G$-Sperner$(t, n)$ if and only if its incidence matrix is $G$-Sperner.
\end{prop}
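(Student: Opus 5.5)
This equivalence is a direct unpacking of the incidence-matrix definition, and I will prove it one proper edge at a time. Let $A$ be the incidence matrix of $\mathcal{F}$, with rows indexed by the points $[1,t]$ and the column $A^{j}$ corresponding to the block $B_j$ (equivalently, to vertex $j$). By the definition of the incidence matrix, $A[l,j]=1$ if and only if $l \in B_j$.

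\emph{Step 1 (a dictionary between rows and set differences).} For any two vertices $a,b$, I claim that $l \in B_a \setminus B_b$ if and only if $A[l,a]=1$ and $A[l,b]=0$. This holds because $l\in B_a$ translates to $A[l,a]=1$, and $l\notin B_b$ translates to $A[l,b]=0$. As a consequence, $|B_a\setminus B_b|\ge 1$ exactly when there is a row index $m$ with $A[m,a]=1$ and $A[m,b]=0$. By symmetry, $|B_b\setminus B_a|\ge 1$ exactly when there is a row index $l$ with $A[l,a]=0$ and $A[l,b]=1$.

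\emph{Step 2 (one edge at a time).} Fix a proper edge $\{a,b\}\in E(G)$. Combining the two equivalences from Step 1 shows that $\mathcal{F}$ is Sperner for $\{a,b\}$ in the sense of Definition \ref{def:G-sperner} if and only if $A$ is Sperner for $\{a,b\}$ in the sense of Definition \ref{def:G-Sperner-matrix}.

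\emph{Step 3 (all edges).} Both notions of being $G$-Sperner are defined as being Sperner for every proper edge of $G$. Taking the conjunction of the equivalence in Step 2 over all proper edges therefore gives the proposition. Loops impose no condition in either definition, so they need no separate treatment.

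None of these steps is a genuine obstacle. The only point requiring care is bookkeeping: the column order of $A$ must follow the labelling $B_j \leftrightarrow j$, and the roles of $l$ and $m$ in Definition \ref{def:G-Sperner-matrix} must be matched with the two set differences $B_b\setminus B_a$ and $B_a\setminus B_b$ respectively.
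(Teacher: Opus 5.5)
Your proof is correct and takes the same approach as the paper, which gives no written proof and simply calls the equivalence easy to verify. Your dictionary ($l \in B_a \setminus B_b$ if and only if $A[l,a]=1$ and $A[l,b]=0$), applied to each proper edge in turn, is exactly that verification.
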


We denote by $t_s(G)$ the minimum $t$ such that there exists a $G$-Sperner$(t, |V(G)|)$ set system, that is,
    $$t_s(G) = \min\{ t : \exists \text{ a } G\text{-Sperner}(t, |V(G)|) \}.$$

In this section, we show that $t_{s}(G)$ is determined from its chromatic number for any simple graph $G$. Our method is to characterize it using a special family of graphs, which we call a \textit{Sperner graph}, defined in this section. We first prove a series of fundamental results before proving the main result. 

For the rest of the paper, we assume that $G$ is a simple graph. Therefore, $G$ has only proper edges.

\begin{prop}
\label{t-in-homo}
    Let $G$ and $H$ be graphs such that $G \rightarrow H$. Then $t_s(G) \leq t_s(H)$.
\end{prop}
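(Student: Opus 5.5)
The plan is to pull back an optimal $H$-Sperner family along the homomorphism. Let $\phi : V(G) \to V(H)$ be a homomorphism, and let $t = t_s(H)$. Choose an $H$-Sperner$(t, |V(H)|)$ set system $([1,t], \{C_w : w \in V(H)\})$ attaining this value. Define a set system on the same ground set $[1,t]$ by assigning to each vertex $v \in V(G)$ the block
$$B_v = C_{\phi(v)}.$$
Equivalently, in matrix form, the column of the new incidence matrix indexed by $v$ is the column of the $H$-Sperner matrix indexed by $\phi(v)$.

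Next we check that this system is $G$-Sperner. Take a proper edge $\{a,b\} \in E(G)$. Since $\phi$ is a homomorphism, $\{\phi(a),\phi(b)\}$ is an edge of $H$. The key point is that $\phi(a) \neq \phi(b)$, so this edge is proper. This holds because $H$ is assumed simple, as stated just before the proposition (equivalently, a homomorphism cannot send a proper edge onto a loop). Hence the $H$-Sperner property gives
$$|C_{\phi(a)} \setminus C_{\phi(b)}| \geq 1 \quad\text{and}\quad |C_{\phi(b)} \setminus C_{\phi(a)}| \geq 1,$$
which is exactly the Sperner condition for $B_a$ and $B_b$ in Definition \ref{def:G-sperner}.

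The one subtlety, and the only real obstacle, is bookkeeping. If $\phi$ is not injective, several vertices of $G$ receive identical blocks. Definition \ref{def:G-sperner} speaks of $|\mathcal{B}| = n$ with $\mathcal{B} = \{B_1,\dots,B_n\}$, so we must read $\mathcal{B}$ as an indexed family (a multiset of columns). This is consistent with the incidence-matrix formulation in Definition \ref{def:G-Sperner-matrix}, where repeated columns are allowed. Repeated blocks only occur for non-adjacent vertices, so no edge condition is violated. The definition also carries the side condition $n \geq t$. If $|V(G)| < t$, we note that $t_s(G)$ is defined as a minimum over all $t$ for which a $G$-Sperner system exists, and we work with the matrix definition, which imposes no such constraint. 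Either way, a $G$-Sperner system on $t_s(H)$ points exists, so $t_s(G) \leq t_s(H)$.
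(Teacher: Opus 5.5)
Your proof is correct and is the paper's argument: it too takes an optimal $H$-Sperner matrix and sets the column of each $v \in V(G)$ equal to the column of $f(v)$, using that edges of $G$ map to edges of $H$. Your remarks on $H$ being simple and on repeated columns for non-adjacent vertices are sound bookkeeping that the paper leaves implicit.
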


\begin{proof}
    Let $f:V(G) \rightarrow V(H)$ be a homomorphism from $G$ to $H$, and $A$ be a $t^{*} \times |V(H)|$ \sloppy$H$-Sperner matrix with $t^* = t_s(H)$. 
    Now, from $A$ we construct a $t^{*} \times |V(G)|$ matrix $M$ with every column corresponding to a vertex in $G$ such that for every $v \in V(G)$, we set the column $M^v$ identical to $A^{f(v)}$. Let $\{x, y\} \in E(G)$. Since homomorphisms preserve edges, $\{f(x), f(y)\} \in E(H)$. Since $A$ is Sperner for the edge $\{f(x), f(y)\}$, then $M$ is also Sperner for $\{x, y\}$. Thus, $M$ is a $G$-Sperner matrix with $t^*$ rows and this implies $t_s(G) \leq t^* = t_s(H)$.
\end{proof}

\begin{prop}
\label{t-in-completegraph}
    Let $n \geq 2$. Then, $t_s(K_n) = t(1, n)$.
\end{prop}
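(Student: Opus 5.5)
This follows by unwinding the definitions: for the complete graph, being $K_n$-Sperner is the same as being a Sperner family. I would prove the equality by showing two inequalities, each by direct translation.

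\textbf{Step 1 (definitional equivalence).} Let $\mathcal{F} = ([1,t],\mathcal{B})$ with $\mathcal{B} = \{B_1,\ldots,B_n\}$, where $B_i$ corresponds to vertex $i$ of $K_n$. Every pair $\{a,b\}$ of distinct vertices is a proper edge of $K_n$. By Definition \ref{def:G-sperner}, $\mathcal{F}$ is $K_n$-Sperner exactly when $B_a \setminus B_b \neq \emptyset$ and $B_b \setminus B_a \neq \emptyset$ for all $a \neq b$, that is, $B_a \nsubseteq B_b$ for all ordered pairs of distinct indices. This is precisely condition \eqref{cff-eq} with $d=1$.

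\textbf{Step 2 (multiplicity).} The one point needing care is that $\mathcal{B}$ has $|\mathcal{B}| = n$ distinct sets in a $1$-CFF, while a $G$-Sperner system is indexed by vertices. In the $K_n$ case, however, the condition $B_a \nsubseteq B_b$ for $a \neq b$ already forces $B_a \neq B_b$, so the $n$ sets are automatically distinct. Hence a $K_n$-Sperner$(t,n)$ is a $1$-CFF$(t,n)$, and conversely any $1$-CFF$(t,n)$, with its blocks labelled by the vertices in any bijective way, is $K_n$-Sperner.

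\textbf{Step 3 (conclusion).} Since the two families of objects coincide for every $t$, the minima coincide and $t_s(K_n) = t(1,n)$. The value is then given explicitly by Equation \ref{max-sperner-family}, which requires $n \geq 2$ as in the hypothesis. The side condition $d < t \le n$ in Definition \ref{def:1} does not cause trouble here, since $t(1,n) \le n$ always holds, for instance via singletons.

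There is no real obstacle. The only step worth stating explicitly is Step 2, the distinctness remark ensuring that the indexed family is a genuine set system of size $n$.
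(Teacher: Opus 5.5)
Your proposal is correct and follows the paper's argument: the paper observes that, since every pair of distinct vertices is an edge of $K_n$, being $K_n$-Sperner is the same as being a Sperner family, i.e., a $1$-CFF. Your remarks on distinctness of the blocks and on the side condition $t \le n$ add care but do not change the approach.
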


\begin{proof}
It is easy to see that a set system being $K_n$-Sperner is equivalent to being Sperner, which is equivalent to being a $1$-CFF.
\end{proof}

\begin{definition}
    The \textit{Sperner graph of order $z$}, denoted $\mathcal{S}(z)$ is a graph $(V, E)$ with
    \begin{itemize}
        \item vertex set $V = \{0,1\}^z = \mathbb{Z}_{2}^{z}$, and 
        \item edge set $E$ such that $\{x, y\} \in E$ if and only if there exists $j, k \in [1, z]$ such that $x_j = 0, y_j = 1$ and $x_k = 1, y_k = 0$.
    \end{itemize}
where $x_j$ represents the $j^{th}$ position of the binary $z$-tuple $x$. 
\end{definition}

From the definition above, it is also clear that the vertices of a Sperner graph of order $z$ correspond to the elements in the power set $\mathcal{P}([1,z])$ and there is an edge between $x, y \subseteq [1,z]$ if and only if $x \not\subseteq y$ and $y \not\subseteq x$. Figure \ref{fig:S3} shows the Sperner graph $\mathcal{S}(3)$.    
\begin{figure}
    \centering

\includegraphics[width = 0.3\textwidth]{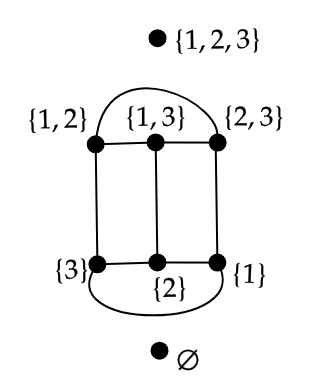}

    \caption{Sperner graph of order $3$}
    \label{fig:S3}
\end{figure}

A \textit{chain} in the partially ordered set of all subsets of $[1, n]$ ordered by inclusion, is a set of subsets in which for every pair $(A, B)$, either  $A \subseteq B$  or $B \subseteq A$. A \textit{chain decomposition} is a partition of $\mathcal{P}([1, n])$  into disjoint chains. The following theorem provides the chromatic number and the clique number of a Sperner graph of order $z$. The proof of this theorem is inspired by work on qualitative independent graphs by Meagher and Stevens~\cite{MS}. 

\begin{theorem}
\label{sperner-clique-coloring}
    $\omega(\mathcal{S}(z)) = \chi(\mathcal{S}(z)) = \binom{z}{\lfloor\frac{z}{2}\rfloor}$.
\end{theorem}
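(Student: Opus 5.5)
We prove the two inequalities
\[
\binom{z}{\lfloor z/2\rfloor} \le \omega(\mathcal{S}(z)) \le \chi(\mathcal{S}(z)) \le \binom{z}{\lfloor z/2\rfloor}.
\]
The middle inequality $\omega \le \chi$ holds for every graph, since the vertices of a clique must receive distinct colors; equivalently, it follows from $K_{\omega(G)} \to G \to K_{\chi(G)}$.

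\textbf{Lower bound on $\omega$.} We exhibit a clique. Identify vertices of $\mathcal{S}(z)$ with subsets of $[1,z]$, and take all $\lfloor z/2\rfloor$-subsets. Two distinct subsets of the same size cannot be contained in one another, so each pair is adjacent in $\mathcal{S}(z)$. This gives a clique of size $\binom{z}{\lfloor z/2\rfloor}$.

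\textbf{Upper bound on $\chi$.} We need a proper coloring with $\binom{z}{\lfloor z/2\rfloor}$ colors. A color class must be an independent set, i.e.\ a set of subsets that are pairwise comparable under inclusion, which is exactly a chain. So it suffices to partition $\mathcal{P}([1,z])$ into $\binom{z}{\lfloor z/2\rfloor}$ chains and give each chain its own color. Then any two vertices with the same color are comparable and hence non-adjacent, so the coloring is proper.

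The existence of such a chain decomposition is the main step. We will invoke the symmetric chain decomposition of the Boolean lattice (de Bruijn--Tengbergen--Kruyswijk). It partitions $\mathcal{P}([1,z])$ into chains, each consisting of sets of consecutive sizes $k, k+1, \ldots, z-k$. Each such chain passes through exactly one set of size $\lfloor z/2\rfloor$, so the number of chains is exactly $\binom{z}{\lfloor z/2\rfloor}$.

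If a self-contained argument is preferred, we can instead apply Dilworth's theorem. The largest antichain in $\mathcal{P}([1,z])$ has size $\binom{z}{\lfloor z/2\rfloor}$ by Theorem~\ref{sperner}, so Dilworth's theorem yields a partition into that many chains. The symmetric chain construction can also be given explicitly by induction on $z$: from each chain $C_1\subset\cdots\subset C_m$ of a decomposition for $[1,z-1]$, form the chain $C_1,\ldots,C_m,C_m\cup\{z\}$ and the chain $C_1\cup\{z\},\ldots,C_{m-1}\cup\{z\}$, discarding the second one when it is empty.

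Combining the clique and the coloring closes the chain of inequalities and gives equality throughout. The only nontrivial ingredient is the chain decomposition, which we take from the literature or from the short induction above.
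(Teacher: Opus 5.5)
Your proof is correct and follows essentially the same route as the paper: the middle layer gives a clique, $\omega \le \chi$ holds in general, and a partition of $\mathcal{P}([1,z])$ into $\binom{z}{\lfloor z/2\rfloor}$ chains gives a proper coloring (the paper cites Dilworth's theorem for this partition). The one small difference is that you get $\omega \le \binom{z}{\lfloor z/2\rfloor}$ from the coloring rather than from Sperner's theorem, so your explicit symmetric-chain induction makes the argument independent of Sperner's theorem.
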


\begin{proof}
    By definition of $\mathcal{S}(z)$, only Sperner families on $[1,z]$ correspond precisely to the cliques in $\mathcal{S}(z)$. Hence, by Sperner's theorem, $\omega(\mathcal{S}(z)) = \binom{z}{\lfloor\frac{z}{2}\rfloor}$.

    \par Clearly, $\chi(\mathcal{S}(z)) \geq \omega(\mathcal{S}(z)) = \binom{z}{\lfloor\frac{z}{2}\rfloor}.$ It is also well-known from Dilworth's theorem \cite{dilworth1987decomposition} that the poset of subsets of $[1,z]$ ordered by inclusion can be decomposed into $\binom{z}{\lfloor\frac{z}{2}\rfloor}$ disjoint chains such that each of the chains contains exactly one set of size $\lfloor \frac{z}{2} \rfloor$. By the definition of Sperner's graph, it is clear that there cannot be an edge in $\mathcal{S}(z)$ between vertices corresponding to any two subsets in a chain. Thus, we can achieve the $\binom{z}{\lfloor\frac{z}{2}\rfloor}$-coloring of $\mathcal{S}(z)$ by assigning a unique color to the vertices in each disjoint chain. Thus, $\chi(\mathcal{S}(z)) \leq \binom{z}{\lfloor\frac{z}{2}\rfloor}.$ Hence, $\chi(\mathcal{S}(z)) = \binom{z}{\lfloor\frac{z}{2}\rfloor}$. 
\end{proof}

\begin{corollary}
\label{t-in-sperner-graph}
$t_s(\mathcal{S}(z)) = z$.    
\end{corollary}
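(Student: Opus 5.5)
We need to show $t_s(\mathcal{S}(z)) = z$, and we will prove the two inequalities separately.

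\textbf{Upper bound.} Use the identity construction. Take the $z \times 2^z$ matrix $A$ whose column indexed by the vertex $x \in \{0,1\}^z$ is $x$ itself. For an edge $\{x,y\}$ of $\mathcal{S}(z)$, the definition supplies indices $j,k$ with $x_j = 0$, $y_j = 1$ and $x_k = 1$, $y_k = 0$. These are exactly the rows required by Definition \ref{def:G-Sperner-matrix}, so $A$ is $\mathcal{S}(z)$-Sperner. Hence $t_s(\mathcal{S}(z)) \leq z$.

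\textbf{Lower bound.} The plan is to pass to a clique and then to a complete graph. By Theorem \ref{sperner-clique-coloring}, $\omega(\mathcal{S}(z)) = \binom{z}{\lfloor z/2 \rfloor}$. A maximum clique gives a homomorphism (in fact an inclusion) $K_{\binom{z}{\lfloor z/2\rfloor}} \rightarrow \mathcal{S}(z)$. Proposition \ref{t-in-homo} then gives
\[
t_s\bigl(K_{\binom{z}{\lfloor z/2\rfloor}}\bigr) \leq t_s(\mathcal{S}(z)).
\]
By Proposition \ref{t-in-completegraph}, the left side equals $t\bigl(1, \binom{z}{\lfloor z/2\rfloor}\bigr)$. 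Equation \ref{max-sperner-family} evaluates this as $z$: for $t < z$ we have $\binom{t}{\lfloor t/2\rfloor} < \binom{z}{\lfloor z/2\rfloor}$, because central binomial coefficients are strictly increasing for $z \geq 1$, while $t = z$ attains equality. Hence $t_s(\mathcal{S}(z)) \geq z$.

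\textbf{Main obstacle.} The only delicate points are edge cases. The strict monotonicity of $\binom{t}{\lfloor t/2\rfloor}$ must be checked, including small $z$. Proposition \ref{t-in-completegraph} is stated for $n \geq 2$, so $z = 1$ needs separate treatment. In that case $\mathcal{S}(1)$ has no edges, and $t_s$ is interpreted per the paper's convention; I would handle it directly or assume $z \geq 2$.
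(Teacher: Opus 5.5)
Your proof is correct, and it departs from the paper only in the upper bound.

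\textbf{Lower bound.} This is exactly the paper's argument: embed $K_{\omega(\mathcal{S}(z))}$, then apply Propositions \ref{t-in-homo} and \ref{t-in-completegraph} together with Equation \ref{max-sperner-family}.

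\textbf{Upper bound.} The paper argues symmetrically. It uses the homomorphism $\mathcal{S}(z) \to K_{\chi(\mathcal{S}(z))}$ and the chromatic-number half of Theorem \ref{sperner-clique-coloring}, which relies on decomposing $\mathcal{P}([1,z])$ into $\binom{z}{\lfloor z/2\rfloor}$ chains. This gives $t_s(\mathcal{S}(z)) \le t\bigl(1,\binom{z}{\lfloor z/2\rfloor}\bigr) = z$.

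Your identity matrix (equivalently, the identity homomorphism $\mathcal{S}(z)\to\mathcal{S}(z)$ in Proposition \ref{homo-in-sperner}) reads the bound straight off the definition of $\mathcal{S}(z)$. As a result, your proof needs only $\omega(\mathcal{S}(z))$, i.e.\ Sperner's theorem, and no chain decomposition. It also yields a matrix with distinct columns, whereas pulling back a coloring repeats columns.

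The paper's sandwich between $\omega$ and $\chi$ costs it nothing extra, since $\chi(\mathcal{S}(z))$ is needed anyway in the proof of Corollary \ref{optimal-H-in}. For this corollary on its own, though, your route is the more elementary one.

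\textbf{Edge cases.} Your treatment of the monotonicity of $\binom{t}{\lfloor t/2\rfloor}$ is fine for $z\ge 2$. The degenerate case $z=1$, where $\mathcal{S}(1)$ is edgeless, is not addressed by the paper either.
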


\begin{proof}
    Since $K_{\omega(\mathcal{S}(z))} \rightarrow \mathcal{S}(z) \rightarrow K_{\chi(\mathcal{S}(z))}$, by Proposition \ref{t-in-homo}, we have $t_s({K_{\omega(\mathcal{S}(z))}}) \leq t_s(\mathcal{S}(z)) \leq t_s({K_{\chi(\mathcal{S}(z))}})$. Then, by Theorem \ref{sperner-clique-coloring}, Proposition \ref{t-in-completegraph}, and Equation \ref{max-sperner-family}, we have that $t_s(\mathcal{S}(z)) = t\left(1, \binom{z}{\lfloor\frac{z}{2}\rfloor}\right) = z$.
\end{proof}

In the next proposition, we give a characterization of $t_s (G)$ via a homomorphism from the graph $G$ to a Sperner graph.

\begin{prop}
\label{homo-in-sperner}
    $t_s(G) = \min_{l \in \mathbb{N}} \{l : G \rightarrow \mathcal{S}(l) \}$. 
\end{prop}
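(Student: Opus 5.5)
We prove the equality by showing both inequalities, in each case translating directly between $G$-Sperner matrices and homomorphisms into Sperner graphs. Let $l^* = \min_{l \in \mathbb{N}}\{l : G \rightarrow \mathcal{S}(l)\}$.

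\emph{Step 1: $t_s(G) \leq l^*$.} Suppose $G \rightarrow \mathcal{S}(l)$. Then Proposition \ref{t-in-homo} gives $t_s(G) \leq t_s(\mathcal{S}(l))$. Corollary \ref{t-in-sperner-graph} gives $t_s(\mathcal{S}(l)) = l$. Taking $l = l^*$ yields $t_s(G) \leq l^*$. (If one prefers to avoid the corollary, the argument is also direct. Given a homomorphism $f$, take the $l \times |V(G)|$ matrix whose column for $v$ is the binary $l$-tuple $f(v)$. An edge $\{x,y\}$ maps to an edge $\{f(x),f(y)\}$ of $\mathcal{S}(l)$, which by definition supplies rows $j,k$ with the required $0/1$ patterns. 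So the matrix is $G$-Sperner.)

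\emph{Step 2: $l^* \leq t_s(G)$.} Let $t = t_s(G)$ and let $A$ be a $t \times |V(G)|$ $G$-Sperner matrix. Define $f: V(G) \rightarrow \{0,1\}^t$ by $f(v) = A^v$, the column of $v$. For an edge $\{a,b\} \in E(G)$, Definition \ref{def:G-Sperner-matrix} provides rows $l, m$ with $A[l,a]=0, A[l,b]=1$ and $A[m,a]=1, A[m,b]=0$. This is exactly the adjacency condition in $\mathcal{S}(t)$, so $\{f(a), f(b)\}$ is an edge.

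One subtlety needs checking here: $\mathcal{S}(t)$ has no loops, so we need $f(a) \neq f(b)$. This holds automatically, since the two coordinates $l$ and $m$ witness that the columns differ. Hence $f$ is a homomorphism, $G \rightarrow \mathcal{S}(t)$, and $l^* \leq t$.

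\emph{Main obstacle.} There is essentially no difficulty beyond bookkeeping. The only points requiring care are that $f$ need not be injective, which homomorphisms allow, and that the minimum $l^*$ exists. Existence follows from Step 2 itself: any $G$-Sperner matrix, for instance one built from an optimal $1$-CFF via Proposition \ref{t-in-completegraph}, gives some valid $l$. Combining Steps 1 and 2 gives $t_s(G) = l^*$.
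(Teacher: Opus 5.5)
Your proof is correct and follows essentially the paper's argument. The paper shows that an $l \times |V(G)|$ $G$-Sperner matrix exists if and only if $G \rightarrow \mathcal{S}(l)$ by identifying each column $A^v$ with a vertex of $\mathcal{S}(l)$; this is exactly your Step 2 together with the direct version of your Step 1, and routing Step 1 through Proposition~\ref{t-in-homo} and Corollary~\ref{t-in-sperner-graph} only repackages that column identification (it is not circular, since that corollary is proved earlier without using this proposition).
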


\begin{proof}
We claim that there exists an \( l \times |V(G)| \) \( G \)-Sperner matrix $A$ if and only if there exists a homomorphism \( f \) from \( G \) to \( \mathcal{S}(l) \). Indeed, if there exists such $A$, then $f : V(G) \to \mathbb{Z}_{2}^{l}$ with $f(v) = A^{v}$ is a homomorphism because for any edge $\{a,b\} \in E(G)$, there exist $j, k \in [1,l]$ such that $A[j, a] = 0$, $A[j, b] = 1$ and $A[k, a] = 1$, $A[k, b] = 0$, and thus $\{A^a, A^b\} \in E(\mathcal{S}(l))$. Now, for the converse, if there exists a homomorphism $f$ from $G$ to $\mathcal{S}(l)$, then for every edge $\{a, b\} \in E(G)$, we must have $\{f(a), f(b)\} \in E(\mathcal{S}(l))$. Let $A$ be an $l \times |V(G)|$ matrix where $A^{v} = f(v)$. Then, if $\{f(a), f(b)\} \in E(\mathcal{S}(l))$, there exist $j, k \in [1, l]$ such that $(f(a))_{j} = 0$, $(f(b))_{j} = 1$, $(f(a))_{k} = 1$, and $(f(b))_{k} = 0$. This also means that in rows $j$ and $k$ of $A$, we have $A[j, a] = 0$, $A[j, b] = 1$, $A[k, a] = 1$, and $A[k, b] = 0$. This claim implies $t_s(G) = \min_{l \in \mathbb{N}} \left\{ l : \exists \text{ a } G\text{-Sperner } (l, |V(G)|) \right\} 
= \min_{l \in \mathbb{N}} \left\{ l : G \to \mathcal{S}(l) \right\}$.

\end{proof}

We have now provided all the foundational results needed to prove the main result of this section.

\begin{corollary}
\label{optimal-H-in}
    Let $G = (V, E)$ be a graph without any isolated vertex. Then, $t_s(G) = t(1, \chi(G))$. 
\end{corollary}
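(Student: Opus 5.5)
Let $k = \chi(G)$. Since $G$ has no isolated vertex, it has at least one edge, so $k \geq 2$ and $t(1,k)$ is defined. The plan is to sandwich $t_s(G)$ between two homomorphism-based bounds, using Proposition \ref{t-in-homo} and Proposition \ref{homo-in-sperner}.

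For the upper bound, I use the homomorphism $G \rightarrow K_{\chi(G)}$ given by an optimal proper coloring. Proposition \ref{t-in-homo} then gives $t_s(G) \leq t_s(K_k)$. By Proposition \ref{t-in-completegraph}, $t_s(K_k) = t(1,k)$, so $t_s(G) \leq t(1,k)$. Concretely, I just assign to each color class one set from an optimal Sperner family of size $k$.

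For the lower bound, I set $l = t_s(G)$. By Proposition \ref{homo-in-sperner} there is a homomorphism $G \rightarrow \mathcal{S}(l)$. Composing homomorphisms, any proper coloring of $\mathcal{S}(l)$ pulls back to a proper coloring of $G$, so $\chi(G) \leq \chi(\mathcal{S}(l))$. By Theorem \ref{sperner-clique-coloring}, $\chi(\mathcal{S}(l)) = \binom{l}{\lfloor l/2 \rfloor}$, hence $\binom{l}{\lfloor l/2\rfloor} \geq k$. Equation \ref{max-sperner-family} characterizes $t(1,k)$ as the least $t$ with $\binom{t}{\lfloor t/2\rfloor} \geq k$, and $\binom{t}{\lfloor t/2\rfloor}$ is nondecreasing in $t$. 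Therefore $l \geq t(1,k)$. Combined with the upper bound, this gives equality.

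The only delicate point is a bookkeeping one: $\mathcal{S}(l)$ contains vertices (such as $\emptyset$ and $[1,l]$) that are isolated or lie in chains. I must make sure the chain-coloring in Theorem \ref{sperner-clique-coloring} is a genuine proper coloring with exactly $\binom{l}{\lfloor l/2\rfloor}$ colors, which that theorem already provides. I also need to observe that composing a homomorphism with a proper coloring yields a proper coloring, which is immediate because $G$ is simple, so no edge maps to a loop. The no-isolated-vertex hypothesis is used only to guarantee $\chi(G) \geq 2$, so that $t(1,\chi(G))$ is meaningful.
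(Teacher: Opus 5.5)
Your proof is correct and follows the paper's argument. The upper bound comes from $G \rightarrow K_{\chi(G)}$ with Propositions \ref{t-in-homo} and \ref{t-in-completegraph}. The lower bound comes from the homomorphism $G \rightarrow \mathcal{S}(l)$ of Proposition \ref{homo-in-sperner}, together with $\chi(\mathcal{S}(l)) = \binom{l}{\lfloor l/2\rfloor}$ and Equation \ref{max-sperner-family}. The only difference is cosmetic: you argue directly with $l = t_s(G)$, whereas the paper argues by contradiction via $\mathcal{S}(k-1)$. Your version also avoids the paper's unstated step that $t_s(G)\le k-1$ gives a homomorphism into $\mathcal{S}(k-1)$ specifically.
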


\begin{proof}
   Since $G \rightarrow K_{\chi(G)}
   $, from Propositions \ref{t-in-homo} and \ref{t-in-completegraph}, we obtain $t_s(G) \leq t_s(K_{\chi(G)})$ and $t_s(K_{\chi(G)}) =  t(1, \chi(G))$. Thus, $t_s(G) \leq t(1, \chi(G))$.
  \par  Assume $t(1, \chi(G)) = k$ for some $k \in \mathbb{N}$. Suppose for the sake of contradiction that $t_s(G) < t(1, \chi(G))$, that is, $ t_s(G) \leq k-1$. Then, by Proposition \ref{homo-in-sperner}, $G \rightarrow \mathcal{S}(k-1)$. By Theorem \ref{sperner-clique-coloring}, $\chi(\mathcal{S}(k-1)) = \binom{k-1}{\lfloor \frac{k-1}{2} \rfloor}$. Since we know that $t(1, \chi(G)) = \min\{l : \binom{l}{\lfloor \frac{l}{2} \rfloor} \geq \chi(G)\}$ and $\chi(G) \leq \chi(\mathcal{S}(k-1)) = \binom{k-1}{\lfloor \frac{k-1}{2} \rfloor}$, the minimum $l$ occurs for some $l \leq k-1$. Thus, $t(1, \chi(G)) \leq k-1 < t(1, \chi(G))$, which is a contradiction. So, $t_s(G) = t(1, \chi(G))$.
\end{proof}

\section{Cover-free families on Graphs}
\label{CFFonGraphs}

We begin this section by giving two equivalent definitions of cover-free families on graphs. Definition \ref{def:3} uses the set system and is a specialization to graphs from the more general cover-free families on hypergraphs defined in \cite{thaismourastructureaware}. Definition \ref{def:3-matrix} is an equivalent one using the incidence matrix of the set system. 

 \begin{definition} \textbf{(via set system)}\label{def:3}
   Let $n, t$ be positive integers with $n \geq t$, and let $G = ([1, n], E(G))$ be a graph. Let $\mathcal{F} = ([1,t], \mathcal{B})$ be a set system with $\mathcal{B} = \{B_1, B_2, \ldots, B_n\}$ and $|\mathcal{B}| = n$ where $B_i$ corresponds to vertex $i$. The set system $\mathcal{F}$ is \emph{cover-free for an edge $\{a,b\}  \in E(G)$}, if for any $i_0 \in [1, n] \setminus \{a,b\}$, we have 

 \begin{center}
    $|B_{i_0} \setminus (B_a \cup B_b)| \geq 1$.
 \end{center}
Then, 
    \begin{enumerate}
        
        \item $\mathcal{F}$ is a $G$-\textit{ECFF}$(t, n)$ if $\mathcal{F}$ is cover-free for all edges in $E(G)$.

    \item $\mathcal{F}$ is a $G$-\textit{CFF}$(t, n)$ if $\mathcal{F}$ is both $G$-ECFF$(t, n)$ and $G$-Sperner$(t, n)$.
  
 \end{enumerate}
 \end{definition}
 We now present equivalent definitions of the above definitions via incidence matrices. 

\begin{definition} \textbf{(via incidence matrix)}
\label{def:3-matrix}
   Let $G = ([1, n], E(G))$ be a graph, and let $A$ be a $t \times n$ binary matrix where each column corresponds to a vertex of $G$. We denote by $A[i, j]$ the entry of $A$ in row $i$ and the column corresponding to the vertex $j \in [1, n]$, and by $A^{j}$ the column vector of $A$ corresponding to the vertex $j$. $A$ is \emph{cover-free for an edge $\{a,b\}  \in E(G)$}, if for any $v_0 \in [1, n] \setminus \{a,b\}$, there exists a row index $l \in [1, t]$ such that $A[l, a] = 0$, $A[l, b] = 0$, and $A[l, v_0] = 1$. Then,

    \begin{enumerate}
        \item Matrix $A$ is $G$-disjunct if it is cover-free for every edge in $E(G)$.

        \item Matrix $A$ is $\overline{G}$-disjunct if it is both $G$-\textit{disjunct} and $G$-Sperner.
      
    \end{enumerate}

\end{definition}
It is easy to prove the equivalences given in the following proposition.

\begin{prop} Let $G = ([1, n], E(G))$ be a graph and $\mathcal{F}$ be a set system. Then, 
    \begin{enumerate}
        \item $\mathcal{F}$ is a $G$-ECFF$(t, n)$ if and only if its incidence matrix is $G$-disjunct.
    
        \item $\mathcal{F}$ is a $G$-CFF$(t, n)$ if and only if its incidence matrix is $\overline{G}$-disjunct.
    \end{enumerate}
\end{prop}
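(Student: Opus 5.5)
The statement consists of two biconditionals, and each follows from comparing Definition \ref{def:3} with Definition \ref{def:3-matrix} condition by condition through the incidence matrix. I would start by fixing notation. Let $\mathcal{F} = ([1,t], \mathcal{B})$ with $\mathcal{B} = \{B_1, \ldots, B_n\}$, and let $A$ be its incidence matrix. By definition, $A[l, j] = 1$ if and only if $l \in B_j$, so the column $A^j$ is exactly the characteristic vector of $B_j$. Every subsequent step is a translation of set membership into matrix entries through this correspondence.

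The key step is to verify, for a single fixed edge $\{a,b\} \in E(G)$ and a fixed vertex $v_0 \in [1,n] \setminus \{a,b\}$, the equivalence
$$|B_{v_0} \setminus (B_a \cup B_b)| \geq 1 \iff \exists\, l \in [1,t] \text{ with } A[l,a] = 0,\ A[l,b] = 0,\ A[l,v_0] = 1.$$
An element $l \in B_{v_0} \setminus (B_a \cup B_b)$ is exactly a row index with $l \in B_{v_0}$, $l \notin B_a$ and $l \notin B_b$. These three conditions translate directly into the three entry conditions on row $l$, and the argument runs in both directions.

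Quantifying this equivalence over all $v_0$ shows that $\mathcal{F}$ is cover-free for the edge $\{a,b\}$ if and only if $A$ is cover-free for $\{a,b\}$. Quantifying further over all edges gives part (1): $\mathcal{F}$ is a $G$-ECFF$(t,n)$ if and only if $A$ is $G$-disjunct.

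For part (2), I would use the fact that $G$-CFF is defined as $G$-ECFF together with $G$-Sperner, and $\overline{G}$-disjunct is defined as $G$-disjunct together with $G$-Sperner. Part (1) identifies the first components of these two conjunctions. The earlier proposition relating Definitions \ref{def:G-sperner} and \ref{def:G-Sperner-matrix} identifies the Sperner components. Taking the conjunction of the two equivalences then gives part (2).

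I do not expect a real obstacle. The only point needing care is to keep the quantifiers aligned: the existential over $l$ must correspond to nonemptiness of the set difference, and the universal quantifiers over $v_0$ and over the edges must appear in the same order on both sides.
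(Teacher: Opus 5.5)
Your proposal is correct and is exactly the definition-unpacking the paper intends; the paper gives no written proof and only remarks that the equivalences are easy. The incidence-matrix columns are the characteristic vectors of the blocks, so $B_{v_0}\setminus(B_a\cup B_b)\neq\emptyset$ is the same as having a row with entries $0,0,1$ in columns $a,b,v_0$, and part (2) follows by combining part (1) with the earlier $G$-Sperner equivalence.
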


Since a $2$-disjunct matrix is equivalent to a $\overline{2}$-disjunct matrix, being a $2$-CFF$(t, n)$ is equivalent to being a $K_n$-ECFF$(t, n)$ or being a $K_n$-CFF$(t, n)$. By Definition \ref{def:3}, a $G$-CFF (or a $\overline{G}$-disjunct matrix) is a $G$-ECFF (or a $G$-disjunct matrix). However, in general, being a $G$-ECFF (or a $G$-disjunct matrix) is not equivalent to being a $G$-CFF (or a $\overline{G}$-disjunct matrix). Example \ref{GDisjunct-not-1CFF} shows a $G$-disjunct matrix that is not $\overline{G}$-disjunct, since the matrix is not Sperner for edges $\{1, 2\}$ and $\{3, 4\}$. 
 \begin{figure}
     \centering
     \includegraphics[width=0.25\linewidth]{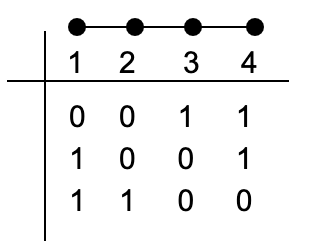}
     \caption{A $P_4$-disjunct matrix}
     \label{GDisjunct-not-1CFF}
 \end{figure}

 We denote by $t_e(G)$ and $t(G)$ the minimum $t$ such that there exist a $G$-ECFF$(t, n)$ and a $G$-CFF$(t, n)$; respectively:
     \begin{align*}
    t_{e}(G)  &= \min\{ t : \exists \text{ a } G\text{-ECFF}(t, |V(G)|) \}, \\
    t(G)      &= \min\{ t : \exists \text{ a } G\text{-CFF}(t, |V(G)|) \}.
\end{align*}

The following proposition is obtained directly from Definition \ref{def:3}, by observing that we can vertically concatenate a $G$-disjunct matrix and a $G$-Sperner matrix to obtain a $\overline{G}$-disjunct matrix. 

\begin{prop} \cite{thaismourastructureaware}
\label{t(G)-simple-bound}
For a simple graph $G$, $t_e(G) \leq t(G) \leq t_{e}(G) + t_s(G)$.
\end{prop}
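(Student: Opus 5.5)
The proof splits into the two inequalities, and each follows directly from the definitions together with one matrix operation.

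For the lower bound $t_e(G) \leq t(G)$, take an optimal $G$-CFF$(t(G), n)$. By Definition \ref{def:3}, a $G$-CFF is in particular a $G$-ECFF, since it is cover-free for every edge of $G$. Hence a $G$-ECFF$(t(G), n)$ exists, and by minimality of $t_e(G)$ we get $t_e(G) \leq t(G)$.

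For the upper bound $t(G) \leq t_e(G) + t_s(G)$, let $A$ be a $t_e(G) \times n$ $G$-disjunct matrix and $S$ a $t_s(G) \times n$ $G$-Sperner matrix. Index both with the same column ordering, so that the column for vertex $j$ refers to the same vertex in each. Let $M$ be the vertical concatenation of $A$ on top of $S$, a $(t_e(G)+t_s(G)) \times n$ matrix.

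We then check the two conditions of Definition \ref{def:3-matrix}.
\begin{enumerate}
\item Take an edge $\{a,b\}$ and a vertex $v_0 \notin \{a,b\}$. The row $l$ of $A$ with $A[l,a]=A[l,b]=0$ and $A[l,v_0]=1$ is still a row of $M$ with the same entries in those columns, so $M$ is $G$-disjunct.
\item Take a proper edge $\{a,b\}$. The two rows of $S$ witnessing the Sperner condition remain rows of $M$, so $M$ is $G$-Sperner.
\end{enumerate}
Therefore $M$ is $\overline{G}$-disjunct. By the equivalence proposition, $M$ is the incidence matrix of a $G$-CFF$(t_e(G)+t_s(G), n)$, which gives the upper bound.

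Both properties only require the existence of certain rows, and adding rows cannot destroy such witnesses. So there is no real obstacle beyond keeping the column indexing consistent across the two matrices. Since $G$ is simple, every edge is proper and the Sperner condition applies to all edges. One small formality: the definitions assume $n \geq t$, and we read this as a convention that does not constrain the existence of the matrices used here.
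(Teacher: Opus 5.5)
Your proof is correct and follows the paper's argument: the lower bound holds because every $G$-CFF is by definition a $G$-ECFF, and the upper bound comes from vertically concatenating a $G$-disjunct matrix with a $G$-Sperner matrix, since adding rows keeps every witness row needed for the $\overline{G}$-disjunct property. Your remark on the $n \geq t$ convention is only a formality; note also that the $n \times n$ identity matrix is $\overline{G}$-disjunct, so $t(G) \leq n$ and the bound is trivial whenever $t_e(G)+t_s(G) > n$.
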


For the graph with $n$ loops $L_n$, we have $t(L_n) = t_e(L_n) = t(1, n)$. If $G$ is a graph with a set of isolated vertices $I$ and at least $3$ non-isolated vertices, then $t_e(G - I) = t_e(G)$ and $t(G - I) = t(G)$; indeed, since isolated vertices are not part of any edges in the graph, to obtain a $G$-disjunct matrix from a $(G-I)$-disjunct matrix or a $\overline{G}$-disjunct matrix from a $\overline{(G - I)}$-disjunct matrix, it suffices to add columns of all $1$'s  as the columns corresponding to the isolated vertices. Therefore, throughout the rest of the paper, we assume that the graph has no isolated vertices.

Idalino and Moura~\cite{thaismourastructureaware} construct CFFs on hypergraphs using strong vertex coloring, which can be adapted to graphs as a construction using vertex coloring, given in Construction \ref{explicit-construct1-color-1}, used to establish Theorem \ref{bound-strongvertexcoloring}.

\begin{theorem} \cite{thaismourastructureaware}
\label{bound-strongvertexcoloring}
    Let $G = (V,E)$ be a graph and $\mathcal{N}_1, \mathcal{N}_2, \ldots, \mathcal{N}_k$ be the set of vertices in each color class of a $k$-vertex-coloring  of $G$. For each $i$, let $n_i = |\mathcal{N}_i|$ and $\sum_{i = 1}^{k} n_i = |V| = n$. Let $t_i = t(1, n_i)$, for $1 \leq i \leq k$. Then, there exists a  $G$-CFF$(t,n)$ with $t = \sum_{i = 1}^{k} t_i$. 
\end{theorem}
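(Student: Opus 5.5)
The plan is to build the matrix as a vertical concatenation of $k$ blocks, one per color class, with each block using only $t_i$ rows. For each $i$, let $A_i$ be the $t_i \times n_i$ incidence matrix of an optimal $1$-CFF$(t_i, n_i)$, which exists by Equation \ref{max-sperner-family}. By Sperner's theorem we may take its columns to be distinct $\floor{t_i/2}$-subsets of $[1,t_i]$. Let $M$ be the $t \times n$ matrix whose rows are partitioned into blocks $R_1,\dots,R_k$ with $|R_i| = t_i$. For a vertex $v \in \mathcal{N}_i$, the restriction of column $M^v$ to block $R_i$ is the column of $A_i$ assigned to $v$, and on every other block $R_j$ ($j \neq i$) it is all zeros. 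We must handle the degenerate case $n_i = 1$: here $t(1,1)$ should be $1$, the single column is $(1)$, and we need $t_i \geq 1$ so that every column is nonzero.

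The first property to check is that $M$ is $G$-Sperner. Take an edge $\{a,b\}$. Since the coloring is proper, $a \in \mathcal{N}_i$ and $b \in \mathcal{N}_j$ with $i \neq j$. Column $a$ has a $1$ in block $R_i$, where $b$ is zero. Symmetrically, column $b$ has a $1$ in block $R_j$, where $a$ is zero. So both required rows exist.

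The second property is that $M$ is cover-free for each edge $\{a,b\}$, with $a \in \mathcal{N}_i$, $b \in \mathcal{N}_j$, $i \ne j$. Fix a third vertex $v_0 \in \mathcal{N}_l$. We need a row where $v_0$ has a $1$ and both $a$ and $b$ have $0$. There are three cases.
\begin{itemize}
\item If $l \notin \{i,j\}$, then on block $R_l$ both $a$ and $b$ are zero. Since column $v_0$ is nonzero there, any $1$ of $v_0$ in $R_l$ works.
\item If $l = i$, then $b$ is zero on $R_i$. Because $A_i$ is Sperner and $v_0 \neq a$ are distinct columns of $A_i$, some row of $R_i$ has $v_0 = 1$ and $a = 0$.
\item If $l = j$, the argument is symmetric, using block $R_j$ and the Sperner property of $A_j$.
\end{itemize}
Hence $M$ is $\overline{G}$-disjunct with $t = \sum t_i$ rows, giving a $G$-CFF$(t,n)$.

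The main obstacle is only bookkeeping: nonzero columns are needed for the case $l \notin\{i,j\}$. This holds because an optimal Sperner family with $n_i \geq 2$ contains no empty set, since the empty set is contained in every other set. For $n_i = 1$ we simply choose the column $(1)$.
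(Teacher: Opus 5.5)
Your proof is correct and follows essentially the same approach as the paper: the block-diagonal matrix of Construction~\ref{explicit-construct1-color-1}, with an optimal $1$-CFF$(t_i,n_i)$ on each diagonal block (the $1\times 1$ all-ones block when $n_i=1$) and zeros elsewhere. Your case analysis of the Sperner and cover-free conditions, including the check that every column is nonzero, supplies the verification that the paper leaves implicit.
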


\begin{Construction}
 \label{explicit-construct1-color-1} 
Let $t_1, t_2, \ldots, t_k, n_1, n_2, \ldots, n_k$ be as in Theorem \ref{bound-strongvertexcoloring}. Construct a matrix $A$ consisting of blocks $B_{ij}, 1 \leq i, j \leq k$, where block column $j$ corresponds to vertices having color $j$. Each $B_{ij}$ is a $t_i \times n_j$ matrix given as follows:
     \begin{center}
    $B_{i,j} = \left\{ \begin{array}{ll}  1\text{-CFF}(t_i, n_i), & \text{if } i = j \text{ and } n_i > 1, \\
    \mathbf{1}_{t_i \times n_i} \text{ with } t_i = 1, & \text{if } i = j \text{ and } n_i = 1, \\
      \mathbf{0}_{t_i \times n_j}, & \text{otherwise}, \\ \end{array} \right. $
\end{center}
as shown in Figure \ref{constructcolor}.
\end{Construction}

\begin{figure}[h!]
  \centering
  \[
  \begin{bNiceMatrix}[first-col,first-row]
   & \textcolor{red}{\mathcal{N}_1} & \textcolor{green}{\mathcal{N}_2} & \textcolor{blue}{\mathcal{N}_3} & \cdots & \textcolor{violet}{\mathcal{N}_k} \\[10pt]
   & \fbox{\rule{0pt}{0.4cm}\makebox[2.3cm]{$1$-CFF$(t_1, n_1)$}} & \fbox{\rule{0pt}{0.4cm}\makebox[2.3cm]{$\mathbf{0}_{t_1, n_2}$}} & \fbox{\rule{0pt}{0.4cm}\makebox[2.3cm]{$\mathbf{0}_{t_1, n_3}$}} & \cdots & \fbox{\rule{0pt}{0.4cm}\makebox[2.3cm]{$\mathbf{0}_{t_1, n_k}$}} \\[10pt]
   & \fbox{\rule{0pt}{0.4cm}\makebox[2.3cm]{$\mathbf{0}_{t_2, n_1}$}} & \fbox{\rule{0pt}{0.4cm}\makebox[2.3cm]{$1$-CFF$(t_2, n_2)$}} & \fbox{\rule{0pt}{0.4cm}\makebox[2.3cm]{$\mathbf{0}_{t_2, n_3}$}} & \cdots & \fbox{\rule{0pt}{0.4cm}\makebox[2.3cm]{$\mathbf{0}_{t_2, n_k}$}} \\[10pt]
   & \fbox{\rule{0pt}{0.4cm}\makebox[2.3cm]{$\mathbf{0}_{t_3, n_1}$}} & \fbox{\rule{0pt}{0.4cm}\makebox[2.3cm]{$\mathbf{0}_{t_3, n_2}$}} & \fbox{\rule{0pt}{0.4cm}\makebox[2.3cm]{$1$-CFF$(t_3, n_3)$}} & \cdots & \fbox{\rule{0pt}{0.4cm}\makebox[2.3cm]{$\mathbf{0}_{t_3, n_k}$}} \\[10pt]
   & \vdots & \vdots & \vdots & \ddots & \vdots \\[5.83pt]
   & \fbox{\rule{0pt}{0.4cm}\makebox[2.3cm]{$\mathbf{0}_{t_k, n_1}$}} & \fbox{\rule{0pt}{0.4cm}\makebox[2.3cm]{$\mathbf{0}_{t_k, n_2}$}} & \fbox{\rule{0pt}{0.4cm}\makebox[2.3cm]{$\mathbf{0}_{t_k, n_3}$}} & \cdots & \fbox{\rule{0pt}{0.4cm}\makebox[2.3cm]{$1$-CFF$(t_k, n_k)$}}
  \end{bNiceMatrix}
  \]
  \caption{A $k$-vertex coloring construction of $G$-CFF}
  \label{constructcolor}
\end{figure}

The following corollary is a direct consequence of Theorem \ref{bound-strongvertexcoloring}.

\begin{corollary}
\label{vertexcoloring-examples}
Let $P_n$, $C_n$, $T_n$ be respectively, a path, a cycle, a tree with $n \geq 3$ vertices, and let $K_{n_1,n_2}$ be a complete bipartite graph with $n_1, n_2 \geq 2$. Then,
\begin{enumerate}
    \item \[t(P_n) \leq 
\begin{cases} 
3 & \text{if } n = 3 \\
2t(1, \frac{n}{2}) & \text{if } n \text{ is even}, n \geq 4, \\
t(1, \left\lfloor \frac{n}{2} \right\rfloor) + t(1, \left\lceil \frac{n}{2} \right\rceil) & \text{if } n \text{ is odd}, n \geq 5 \\
\end{cases}\]
\item \[t(C_n) \leq 
\begin{cases} 
3 & \text{if } n = 3, \\
2t(1, \frac{n}{2}) & \text{if } n \text{ is even}, n \geq 4, \\
2t(1, \left\lfloor \frac{n}{2} \right\rfloor) + 1 & \text{if } n \text{ is odd}, n \geq 5.
\end{cases}\]
\item \[t(K_{n_{1},n_{2}}) \leq t(1, n_1) + t(1, n_2).\]
\item $t(T_n) \leq t(1, n_1) + t(1, n_2)$ where $n_1$ and $n_2$ are the number of vertices in each of the two color classes for a $2$-coloring of $T_n$. 
\end{enumerate}
\end{corollary}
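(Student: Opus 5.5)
Each item follows by applying Theorem \ref{bound-strongvertexcoloring} to a suitable proper vertex coloring of the graph and computing $\sum_i t(1,n_i)$, with the convention of Construction \ref{explicit-construct1-color-1} that a color class of size $1$ contributes a single row ($t_i=1$). So the plan is, for each graph, to pick a coloring with balanced class sizes and then read off the bound.

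For the path $P_n$ with vertices $v_1,\dots,v_n$, we color $v_i$ by the parity of $i$. This proper $2$-coloring has class sizes $\lfloor n/2\rfloor$ and $\lceil n/2\rceil$, which gives $t(1,\lfloor n/2\rfloor)+t(1,\lceil n/2\rceil)$, equal to $2t(1,n/2)$ when $n$ is even. For $n=3$ the classes have sizes $1$ and $2$, so the bound is $1+t(1,2)=1+2=3$.

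For the cycle $C_n$ with $n$ even, the parity coloring is again proper, giving $2t(1,n/2)$. For $n$ odd, parity fails only on the edge $\{v_n,v_1\}$. We therefore use the classes $\{v_1,v_3,\dots,v_{n-2}\}$, $\{v_2,v_4,\dots,v_{n-1}\}$ and $\{v_n\}$, of sizes $\lfloor n/2\rfloor$, $\lfloor n/2\rfloor$ and $1$. This yields $2t(1,\lfloor n/2\rfloor)+1$. For $n=3$ the three singleton classes give $1+1+1=3$.

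For $K_{n_1,n_2}$ we take the bipartition as the $2$-coloring. For a tree $T_n$ we take its bipartition, which exists because trees are bipartite. In both cases Theorem \ref{bound-strongvertexcoloring} gives $t(1,n_1)+t(1,n_2)$ directly. For the tree, a class may have size $1$, as in a star, and then that term should be read as $1$ rather than $t(1,1)$. Stating this singleton convention explicitly is the only point needing any care; the rest is routine substitution into the theorem.
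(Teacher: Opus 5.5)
Your proposal is correct and follows the paper's strategy. You apply Theorem \ref{bound-strongvertexcoloring} to the bipartition of $P_n$, of $C_n$ for even $n$, of $K_{n_1,n_2}$ and of $T_n$, and to a $3$-coloring of $C_n$ for odd $n$.

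The only real divergence is the odd-cycle case with $n\ge 5$. There the paper's write-up takes the $P_n$-CFF produced by the theorem and appends a row that is $0$ on $v_1,v_n$ and $1$ elsewhere. Since $v_1$ and $v_n$ lie in the larger parity class, this costs $t(1,\lfloor n/2\rfloor)+t(1,\lceil n/2\rceil)+1$ rows. That is one more than the stated bound whenever $\lfloor n/2\rfloor=\binom{x}{\lfloor x/2\rfloor}$ for some $x$; for $n=5$ it gives $6$ rather than $5$. Your explicit classes of sizes $\lfloor n/2\rfloor,\lfloor n/2\rfloor,1$ give exactly $2t(1,\lfloor n/2\rfloor)+1$, so your version is the one that actually matches the statement.

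Two smaller differences:
\begin{itemize}
\item You handle $n=3$ directly via singleton classes, where the paper cites Proposition \ref{small-cycles-234}.
\item You insist that a singleton class contributes one row, as in Construction \ref{explicit-construct1-color-1}, rather than ``$t(1,1)$''. This is what makes item 4 correct for stars, where it yields $t(1,n-1)+1=t(S_n)$.
\end{itemize}
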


\begin{proof}
The result follows from Theorem \ref{bound-strongvertexcoloring} and the fact that $P_n, T_n$ are $2$-colorable and $C_n$ is $2$-colorable for $n$ even and $3$-colorable for $n$ odd. The size $n_i$ of each color class for a $2$-coloring is computed in the case of each type of graph:
  \begin{enumerate}
      \item For $P_n, n_1 = \floor{\frac{n}{2}}$ and $n_2 = \ceil{\frac{n}{2}}$, for $n > 3$. For $P_3$, see Proposition \ref{small-cycles-234}.

      \item For $C_n, n_1 =\frac{n}{2}$ and $n_2 = \frac{n}{2}$ if $n$ is even. If $n \geq 5$ is odd, construct a $P_n$-disjunct matrix by Theorem \ref{bound-strongvertexcoloring} and add a row with $0$'s in the columns corresponding to the first and the last vertices and $1$'s for the remaining vertices. See Proposition \ref{small-cycles-234}, for $C_3$.
      \item $K_{n_1, n_2}$ is $2$-colorable with color classes of size $n_1$ and $n_2$.

      \item The tree $T_n$ is $2$-colorable with color classes of size $n_1$ and $n_2$, as given. 

  \end{enumerate}
\end{proof}

For all cases of graph $G_n$ given in Corollary \ref{vertexcoloring-examples} above, $t(G_n) \leq 2\log_{2} n + 1$ as $n$ grows. However, in Section \ref{cff-on-paths-cycles} we improve this bound for $t(P_n)$ and $t(C_n)$ to $\frac{3}{\log_{2} 3} \log_{2} n + \O(1) \approx 1.894 \log_{2}(n) + \O(1)$.

\begin{prop}
\label{subgraph-CFF}
If $G$ is a subgraph of $H$, then $t(G) \leq t(H)$.
\end{prop}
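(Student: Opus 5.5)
The plan is to prove the inequality by restriction: take an optimal $\overline{H}$-disjunct matrix and keep only the columns indexed by $V(G)$. Let $A$ be a $t \times |V(H)|$ $\overline{H}$-disjunct matrix with $t = t(H)$, which exists by definition of $t(H)$. Let $M$ be the $t \times |V(G)|$ submatrix of $A$ consisting of the columns $A^{v}$ with $v \in V(G)$, so that $M^{v} = A^{v}$ for every $v \in V(G) \subseteq V(H)$. It suffices to show that $M$ is $\overline{G}$-disjunct, since that gives $t(G) \leq t = t(H)$.

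First I will check the $G$-Sperner condition (Definition \ref{def:G-Sperner-matrix}). Take a proper edge $\{a,b\} \in E(G)$. Since $E(G) \subseteq E(H)$, it is also an edge of $H$. Because $A$ is $H$-Sperner, there are rows $l, m$ with $A[l,a]=0$, $A[l,b]=1$, $A[m,a]=1$ and $A[m,b]=0$. The columns of $a$ and $b$ are unchanged in $M$, so the same rows $l, m$ witness that $M$ is Sperner for $\{a,b\}$.

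Next I will check the cover-free condition (Definition \ref{def:3-matrix}). Take $\{a,b\} \in E(G)$ and any $v_0 \in V(G) \setminus \{a,b\}$. Then $\{a,b\} \in E(H)$ and $v_0 \in V(H) \setminus \{a,b\}$. Since $A$ is $H$-disjunct, there is a row $l$ with $A[l,a]=A[l,b]=0$ and $A[l,v_0]=1$, and this row works for $M$ as well. Hence $M$ is $G$-disjunct, and together with the first step it is $\overline{G}$-disjunct.

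I do not expect a genuine obstacle here, since deleting columns and edges only removes constraints and never creates new ones. The one point to handle with care is bookkeeping in the definitions. Definition \ref{def:3} is phrased with the condition $n \geq t$ and with vertex set $[1,n]$, so I would relabel $V(G)$ as $[1,|V(G)|]$ and note that the defining conditions themselves make sense for any $t$; the minimum $t(G)$ is then still bounded by $t$. If $G$ has isolated vertices, they impose no conditions, so the argument is unaffected.
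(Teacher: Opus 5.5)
Your proof is correct and is essentially the paper's argument: restrict an optimal $\overline{H}$-disjunct matrix to the columns of $V(G)$ and observe that every Sperner and cover-free requirement for $G$ is already a requirement for $H$. The paper phrases this in two steps (first $H[V(G)]$, then deleting edges to reach $G$), while you check both conditions for $G$ directly; the content is the same.
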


\begin{proof}
Let $A$ be the incidence matrix of an $H$-CFF$(t, |V(H)|)$. Let $M$ be the $t \times |V(G)|$ matrix where each column corresponding to a vertex $v_i \in V(G)$ is identical to the column corresponding to $v_i$ in $A$. It is easy to see that $M$ is $\overline{H[V(G)]}$-disjunct since deletion of vertices from $H$ to obtain $H[V(G)]$ does not affect the property that $M$ is cover-free and Sperner for all edges in $H[V(G)]$. Similarly, removing edges from $H[V(G)]$ to obtain the graph $G$ does not affect the property that $M$ is cover-free and Sperner for all edges in $G$. Thus, $M$ is $\overline{G}$-disjunct and $t(G) \leq t(H[V(G)]) \leq t(H)$. 

\end{proof}

\begin{theorem}
\label{GbarCFF-1CFF}
     Let $G = (V(G), E(G))$ be a graph with no isolated vertices. Then any $G$-CFF$(t, |V(G)|$ is a $1$-CFF$(t, |V(G)|)$. 
    \end{theorem}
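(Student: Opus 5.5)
We have to show that in any $G$-CFF$(t,n)$, where $G$ has no isolated vertices, no set $B_u$ is contained in another set $B_v$ with $u\neq v$. We argue by contradiction: suppose $B_u \subseteq B_v$ for two distinct vertices $u,v$. The plan is to produce an edge of $G$ that violates either the Sperner condition or the cover-free condition of Definition \ref{def:3}, splitting into cases according to how $u$ and $v$ sit relative to the edges of $G$.

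\textbf{Case 1: $\{u,v\}\in E(G)$.} Then $\mathcal{F}$ is Sperner for the edge $\{u,v\}$, so $|B_u\setminus B_v|\geq 1$, which contradicts $B_u\subseteq B_v$.

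\textbf{Case 2: $\{u,v\}\notin E(G)$.} Since $G$ has no isolated vertices, $v$ has a neighbour $w$, and $w\neq v$. Since $\{u,v\}$ is not an edge, $w\neq u$ as well. Consider the edge $\{v,w\}$. The vertex $u$ lies in $[1,n]\setminus\{v,w\}$, so cover-freeness for $\{v,w\}$ gives $|B_u\setminus(B_v\cup B_w)|\geq 1$. But $B_u\subseteq B_v\subseteq B_v\cup B_w$, a contradiction.

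Both cases give a contradiction, so the family is Sperner, that is, a $1$-CFF$(t,|V(G)|)$. The only point needing care is that the neighbour chosen must be a neighbour of the \emph{larger} set $B_v$, not of $u$; a neighbour of $u$ would not give a usable inequality. The case split guarantees this neighbour is distinct from $u$. The edge-case $n=2$ (a single edge) falls entirely under Case 1, so no size assumption on $n$ is needed. This immediately yields the lower bound $t(1,n)\le t(G)$ of the trivial bound.
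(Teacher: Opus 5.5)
Your proof is correct and is essentially the paper's argument. The paper also splits on whether $\{u,v\}$ is an edge, using the Sperner condition in the first case and cover-freeness of edges $\{u,u'\}$ and $\{v,v'\}$ at neighbours in the second. The only difference is that the paper argues directly, exhibiting both separating rows, whereas you argue by contradiction using a single neighbour of the vertex with the larger set.
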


\begin{proof}
   Let $n = |V(G)|$ and $A$ be a $\overline{G}$-disjunct matrix. We want to show that $A$ is $1$-disjunct. Let $u, v \in V(G)$. First, consider the case where $\{u,v\} \in E(G)$. Since $A$ is a $G$-Sperner matrix, there exists a row $j_1$ such that $A[j_1, u] = 0$ and $A[j_1, v] = 1$, and a row $j_2$ such that $A[j_2, u] = 1$ and $A[j_2, v] = 0$. Second, consider the case where $\{u,v\} \notin E(G)$. Let $u', v' \in V(G)$ such that $\{u, u'\} \in E(G)$ and $\{v, v'\} \in E(G)$, which must exist since $G$ has no isolated vertex. Thus, since $A$ is $\overline{G}$-disjunct matrix, then there must exist a row $i_1$ with $A[i_1, u] = A[i_1, u'] = 0$ and $A[i_1, v] = 1$ and a row $i_2$ with $A[i_2, v] = A[i_2, v'] = 0$ and $A[i_2, u] = 1$. 
   From both cases considered, we conclude that for any $u, v \in V(G)$, there exists a row in $A$ that has a $0$ and a $1$ in the columns corresponding to vertices $u$ and $v$, respectively, and a row that has a $1$ and a $0$ in the columns corresponding to vertices $u$ and $v$, respectively.
   This implies $A$ is a $1$-CFF. 
\end{proof}

\begin{corollary}
\label{trivial-bound}
   Let $G = (V(G), E(G))$ be a graph on $n$ vertices, with no isolated vertices. Then,
   \begin{equation}
   \label{trivialbound-eq}
       t(1,n) \leq t(G) \leq t(2,n).
   \end{equation}

\end{corollary}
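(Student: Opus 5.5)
The plan is to prove the two inequalities of Equation \ref{trivialbound-eq} separately. Each follows from a result already established.

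For the lower bound $t(1,n) \leq t(G)$, take an optimal $G$-CFF$(t(G), n)$, that is, a $t(G) \times n$ $\overline{G}$-disjunct matrix $A$. Since $G$ has no isolated vertices, Theorem \ref{GbarCFF-1CFF} applies and shows that $A$ is also the incidence matrix of a $1$-CFF$(t(G), n)$. By the definition of $t(1,n)$ as a minimum, we get $t(1,n) \leq t(G)$.

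For the upper bound $t(G) \leq t(2,n)$, we use the fact that $G$ is a simple graph on $n$ vertices, so $G$ is a subgraph of $K_n$ on the same vertex set. Proposition \ref{subgraph-CFF} then gives $t(G) \leq t(K_n)$. It remains to identify $t(K_n)$ with $t(2,n)$, which was noted after Definition \ref{def:3-matrix}. Indeed, a $2$-disjunct matrix is equivalent to a $\overline{2}$-disjunct matrix. The latter condition, applied to every pair of vertices, is exactly being cover-free and Sperner for every edge of $K_n$. Hence a $2$-CFF$(t,n)$ is the same as a $K_n$-CFF$(t,n)$, and $t(K_n) = t(2,n)$.

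No step here is a genuine obstacle. The only point needing care is that the hypothesis of no isolated vertices is used precisely where Theorem \ref{GbarCFF-1CFF} is invoked. It is also implicitly needed for the notation $t(G)$ with $n$ columns to be compared with $t(1,n)$, since isolated vertices could otherwise receive all-ones columns and break the Sperner property.
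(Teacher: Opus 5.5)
Your proposal is correct and follows the paper's proof. The paper also gets the lower bound from Theorem \ref{GbarCFF-1CFF} (every $G$-CFF is a $1$-CFF when $G$ has no isolated vertices), and the upper bound from Proposition \ref{subgraph-CFF} applied to $G \subseteq K_n$ together with $t(K_n) = t(2,n)$; your justification of that last identity via the equivalence of $2$-disjunct and $\overline{2}$-disjunct matrices is a detail the paper leaves implicit.
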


\begin{proof}
    Since $G$ is a subgraph of $K_n$, by Proposition \ref{subgraph-CFF}, $t(G) \leq t(2,n)$.  Since, by Theorem \ref{GbarCFF-1CFF}, a $G$-CFF$(t, n)$ is a $1$-CFF$(t, n)$, we have $t(1, n) \leq t(G)$. 
     
\end{proof}

\begin{corollary}
\label{graph-with-vertices-central-binomial}
   Let $G$ be a graph with no isolated vertices and let $n = |V(G)|$ with $n = \binom{x}{\floor{\frac{x}{2}}}$, $x \geq 3$. Then,
   \begin{equation}
    \label{eq:t(G)>=x+1}
    t(G) \geq x + 1 = t(1, n) + 1.
   \end{equation}
\end{corollary}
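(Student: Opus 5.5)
By Corollary \ref{trivial-bound}, $t(G) \geq t(1,n) = x$, since $n = \binom{x}{\floor{x/2}}$ and Equation \ref{max-sperner-family} give $t(1,n) = x$. So it suffices to rule out $t(G) = x$. I would argue by contradiction: suppose $A$ is an $x \times n$ $\overline{G}$-disjunct matrix.

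The first step is to pin down the structure of $A$. By Theorem \ref{GbarCFF-1CFF}, the columns of $A$ form a Sperner family of $n = \binom{x}{\floor{x/2}}$ subsets of $[1,x]$. By the equality case of Sperner's theorem (Theorem \ref{sperner}), the columns are then exactly all $\floor{x/2}$-subsets of $[1,x]$ or exactly all $\ceil{x/2}$-subsets. In either case every column has the same weight $w \in \{\floor{x/2}, \ceil{x/2}\}$, and every $w$-subset of $[1,x]$ occurs as a column.

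The second step is to violate the cover-free condition on an edge. Since $G$ has no isolated vertices, pick an edge $\{a,b\}$. The columns $B_a \neq B_b$ are both $w$-sets, so $|B_a \cup B_b| \geq w+1$. It is enough to find a $w$-subset $C \subseteq B_a \cup B_b$ with $C \neq B_a, B_b$, because then the vertex $v_0$ whose column is $C$ satisfies $B_{v_0} \subseteq B_a \cup B_b$, contradicting cover-freeness for $\{a,b\}$.

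To find $C$, I would take an element $p \in B_b \setminus B_a$ and an element $q \in B_a$, and set $C = (B_a \setminus \{q\}) \cup \{p\}$. Then $C \neq B_a$ because $p \in C$. We also need $C \neq B_b$, which can fail for one choice of $q$ only when $|B_a \setminus B_b| = 1$. In that case, choose $q \in B_a \cap B_b$ instead, which is possible when $w \geq 2$. Then $C$ contains $B_a \setminus B_b$ and so is not contained in $B_b$. If $B_a \cap B_b = \emptyset$, then $B_a \cup B_b$ has size $2w \geq w+2$ and contains many other $w$-subsets.

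The main obstacle is this small-case bookkeeping: making sure such a $C$ exists for every pair of distinct $w$-sets. The argument needs $w \geq 2$, or at least that $B_a \cup B_b$ has more than two $w$-subsets. Here $x \geq 3$ is where the hypothesis enters. The borderline case is $x=3$ with $w=1$, where $B_a \cup B_b$ is a $2$-set containing only the singletons $B_a$ and $B_b$. So for $x = 3$ I would instead take $w = 2$, or handle the case directly: with $x=3$ and $n=3$, the singleton family $\{1\},\{2\},\{3\}$ is actually a $2$-CFF. I therefore need to recheck the $x=3$ case carefully; possibly the statement intends $x \geq 4$, or the weight-$\ceil{x/2}$ option is the only one needing care. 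For $x \geq 4$, both possible weights satisfy $w \geq 2$ and the argument goes through.
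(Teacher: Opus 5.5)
Your route is the same as the paper's. You get $t(G)\ge t(1,n)=x$ from Corollary~\ref{trivial-bound}. Then Theorem~\ref{GbarCFF-1CFF} and the equality case of Sperner's theorem force the columns to be all $w$-subsets of $[1,x]$, with $w\in\{\lfloor x/2\rfloor,\lceil x/2\rceil\}$. Finally, any edge $\{a,b\}$ yields a third $w$-set inside $B_a\cup B_b$. The paper only asserts this last step (``the union of any two subsets in the family must contain at least one other subset in it''). Your explicit choice of $C$ proves it correctly whenever $w\ge 2$. A shorter argument also works: $|B_a\cup B_b|\ge w+1$, so $B_a\cup B_b$ contains at least $\binom{w+1}{w}=w+1\ge 3$ subsets of size $w$.

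Your worry about $x=3$ should be stated as a conclusion, not a hedge: the corollary is false for $x=3$. There $n=3$, and the singleton family $\{1\},\{2\},\{3\}$ (the identity matrix $I_3$) is a $2$-CFF$(3,3)$. It is therefore a $G$-CFF$(3,3)$ for every graph $G$ on $3$ vertices, so $t(P_3)=t(C_3)=3=x$. The paper itself records this in Proposition~\ref{small-cycles-234} and Table~\ref{tab:small-n-path-cycle-wheel}. The paper's proof fails at exactly the step it asserts without proof, since two distinct singletons cover no other set.

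Also, ``take $w=2$'' is not a valid fix. The weight is dictated by the hypothetical CFF, and the problematic family is precisely the $w=1$ one. The correct statement requires $x\ge 4$, so that both possible weights are at least $2$. That is all the paper uses later, since Theorem~\ref{matching-sperner} assumes $x\ge 4$. For that range your argument is complete.
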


\begin{proof}
  By Corollary \ref{trivial-bound}, $t(G) \geq t(1, n)$. Then, by Equation \ref{max-sperner-family}, $$t(1, n)  = t\!\left(1, \binom{x}{\left\lfloor \tfrac{x}{2} \right\rfloor}\right) = x.$$ Assume there exists a $G$-CFF$\left(x, \binom{x}{\left\lfloor \tfrac{x}{2} \right\rfloor}\right)$. Since by Theorem \ref{GbarCFF-1CFF}, a $G$-CFF is also a Sperner family with $\binom{x}{\left\lfloor \tfrac{x}{2} \right\rfloor}$ subsets; by Theorem \ref{sperner}, this family must consist of every subset of size $\floor{\frac{x}{2}}$ or every subset of size $\ceil{\frac{x}{2}}$. Thus, the union of any two subsets in the family must contain at least one other subset in it. Hence, it is impossible to construct a $G$-CFF$\left(x, \binom{x}{\left\lfloor \tfrac{x}{2} \right\rfloor}\right)$ on a ground set of size $x$, so $t(G) \geq x + 1 = t(1, n) + 1$.
\end{proof}

\subsection{Establishing that $\mathbf{t(G) \sim t_e(G)}$}
From Definition \ref{def:3}, $t_e(G) \leq t(G)$. 
In this section, we show that, in many cases, $t(G) = t_e(G)$ and in all cases, their difference is at most $2$. First, we provide a necessary and sufficient condition for a graph $G$ such that any $G$-ECFF must be a $G$-CFF.

\begin{prop}
\label{GbarCFF=GCFF-nec-suf}
    For a graph $G = ([1, n], E(G))$, any $G$-disjunct matrix is $\overline{G}$-disjunct if and only if $\delta(G) \geq 2$.
\end{prop}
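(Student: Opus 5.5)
Both directions are short; the sufficiency direction uses only the definitions, and the necessity direction needs an explicit counterexample matrix.

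\emph{Sufficiency} ($\delta(G)\ge 2 \Rightarrow$ every $G$-disjunct matrix is $\overline{G}$-disjunct). Let $A$ be $G$-disjunct and $\{a,b\}\in E(G)$; it suffices to show $A$ is Sperner for $\{a,b\}$. Since $\deg(a)\ge 2$, $a$ has a neighbour $a'\neq b$. Apply cover-freeness of the edge $\{a,a'\}$ with $v_0=b\notin\{a,a'\}$. This gives a row $l$ with $A[l,a]=A[l,a']=0$ and $A[l,b]=1$. Symmetrically, $b$ has a neighbour $b'\neq a$, and the edge $\{b,b'\}$ with $v_0=a$ gives a row $m$ with $A[m,b]=0$ and $A[m,a]=1$. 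Hence $A$ is Sperner for $\{a,b\}$, so $A$ is $G$-Sperner and therefore $\overline{G}$-disjunct. This is the same device as in Theorem \ref{GbarCFF-1CFF}.

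\emph{Necessity.} Suppose $\delta(G)\le 1$; since there are no isolated vertices, some vertex $u$ has degree exactly $1$, with unique neighbour $w$. I will exhibit a $G$-disjunct matrix that is not Sperner for $\{u,w\}$ (generalizing Figure \ref{GDisjunct-not-1CFF}). Take $A$ with $n$ rows and set $A[i,v]=1$ iff $i=v$, except that column $w$ is all zeros and column $u$ has $1$'s in rows $u$ and $w$. Then $A^w=\mathbf{0}\subseteq A^u$, so $A$ is not Sperner for $\{u,w\}$.

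Checking that this $A$ is $G$-disjunct is the main point of care. Every column other than $u$ and $w$ is the unit vector $e_v$, and column $u$ is $e_u+e_w$.
\begin{itemize}
\item For an edge $\{a,b\}$ and any $v_0\notin\{a,b\}$ with $v_0\neq u,w$, row $v_0$ separates $v_0$ from $a$ and $b$. Indeed $A[v_0,a]=0$ unless $a=u$ and $v_0=w$, which is excluded.
\item If $v_0=u$, we need a row where $u$ has a $1$ and both $a,b$ have $0$. Since $u\notin\{a,b\}$ and $\{u,w\}$ is the only edge at $u$, row $u$ works because only column $u$ has a $1$ there.
\item If $v_0=w$, column $w$ is zero, so $w$ would be covered. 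This case requires $w\notin\{a,b\}$.
\end{itemize}

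The last case is the obstacle. The fix I would try is to give $w$ its own private row: add a row $n+1$ with a $1$ only in column $w$. Then $A^w=e_{n+1}$ and $A^u=e_u+e_w$, so the two columns are incomparable and Sperner now holds for $\{u,w\}$. To restore the failure, place the $1$ of row $n+1$ in column $u$ as well. Then $A^w=e_{n+1}\subseteq A^u$, so Sperner still fails for $\{u,w\}$.

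Now recheck the cover-free conditions:
\begin{itemize}
\item For $v_0=w$ and an edge $\{a,b\}$ with $w\notin\{a,b\}$: $u\notin\{a,b\}$ as well, since $u$'s only edge contains $w$. So row $n+1$ has $1$'s only in columns $u$ and $w$, and it separates $w$ from $a$ and $b$.
\item For $v_0=u$: row $u$ works as before.
\item For other $v_0$: row $v_0$ works, except that row $w$ contains a $1$ in column $u$. This only matters for the edge $\{u,w\}$ when $v_0$ equals the vertex indexing row $w$, namely $w$ itself, which is excluded.
\end{itemize}
So the modified matrix is $G$-disjunct but not $\overline{G}$-disjunct, which completes necessity.
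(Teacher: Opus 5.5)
Your proof is correct. Sufficiency is the paper's argument: take a second neighbour $a'$ of $a$, apply cover-freeness of $\{a,a'\}$ at $b$, and argue symmetrically for $b$. You are right to invoke the standing assumption that $G$ has no isolated vertices; without it the equivalence fails, e.g.\ for $K_3$ plus an isolated vertex.

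For necessity your route differs from the paper's, and it is the sounder of the two. The paper takes an arbitrary $\overline{(G-\{u\})}$-disjunct matrix $A$ and adds a column for the leaf $u$ that is a copy of $A^{v}$, where $v$ is the neighbour of $u$. As written, this can only work when $\{u,v\}$ is a single-edge component. If $v$ has a further neighbour $b$, cover-freeness of the edge $\{v,b\}$ at $u$ requires $B^{u}\not\subseteq B^{v}\cup B^{b}$, which is impossible when $B^{u}=B^{v}$; this already happens for $P_3$.

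Your construction orients the Sperner failure the right way: the neighbour's column sits strictly inside the leaf's column, and the leaf has a private row. This orientation is forced whenever $\deg(w)\ge 2$. Otherwise $A^{u}\subseteq A^{w}$ would give $A^{u}\subseteq A^{w}\cup A^{b}$ for any other neighbour $b$ of $w$.

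The paper's approach can be repaired in the same spirit by appending one row with a $1$ only in column $u$. When $\deg(v)\ge 2$, the graph $G-\{u\}$ has no isolated vertices, so $A$ is $1$-disjunct by Theorem \ref{GbarCFF-1CFF}, and this handles the edge $\{u,v\}$. The repaired version would give a quantitative by-product: a non-Sperner $G$-disjunct matrix with $t(G-\{u\})+1$ rows. Your explicit unit-vector matrix is simpler and self-contained, and the proposition needs no control on the number of rows.

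One cosmetic point. In your final matrix, rows $u$ and $w$ coincide, each having its only $1$ in column $u$. You can drop row $w$ and present directly the $n\times n$ matrix with columns $e_v$ for $v\neq u,w$, $A^{w}=e_{n+1}$ and $A^{u}=e_u+e_{n+1}$. This makes your somewhat tangled last verification bullet unnecessary, and it also respects the convention $n\ge t$ of Definition \ref{def:3}.
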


\begin{proof}
    First, suppose $\delta(G) \geq 2$. We need to show that every $G$-disjunct matrix is also $G$-Sperner. Let a matrix $A$ be a $G$-disjunct matrix.
    For an edge $\{u, v\}$,  let $x$ and $y$ be neighbors of $u$ and $v$, respectively, such that $x, y \notin \{u,v\}$. Then, for the edge $\{u, x\}$ and the vertex $v$, there must exist a row $i$ in $A$ such that $A[i, u] = 0, A[i, x] = 0,$ and $A[i, v] = 1$. Similarly, for the edge $\{v, y\}$ and the vertex $u$, there must exist a row $j$ in $A$ such that $A[j, v] = 0, A[j, y] = 0,$ and $A[j, u] = 1$. Thus, we have $A[i, u] = 0, A[i, v] = 1, A[j, v] = 0$, and $A[j, u] = 1$, which proves that $A$ is Sperner for the edge $\{u, v\}$. Thus, $A$ is $G$-Sperner, which implies $A$ is $\overline{G}$-disjunct. 

    Now, we show by contraposition that if $\delta(G) < 2$, there exists a $G$-disjunct matrix that is not $\overline{G}$-disjunct. Let $\{u, v\}$ be an edge with $\deg(u) = 1$. Let $A$ be a $t \times (n-1)$ $\overline{(G - \{u\})}$-disjunct matrix. Let $B$ be a $t \times n$ $G$-disjunct matrix constructed by adding a column to $A$, corresponding to $u$, which is equal to $A^{v}$.
   It is easy to see that $B$ is $G$-disjunct since using that $A$ is $1$-disjunct, we conclude that $B$ is cover-free for $\{u, v\}$.
    Note that $B$ is not $\overline{G}$-disjunct because $B$ is not Sperner for $\{u, v\}$. Hence, $B$ is not $G$-Sperner which means $B$ is  not $\overline{G}$-disjunct. 
 
\end{proof}

\begin{theorem}
\label{GbarCFF=GCFF}
   Let $G$ be a graph.
\begin{enumerate}

    \item
    \label{delta=2}
    If $\delta(G) \geq 2$, then \begin{equation}
        t(G) = t_{e}(G).
    \end{equation} 
 
    \item \label{t_e(G)+1} If $\delta(G) = 1$ and $G$ has no connected component that is a single edge, then \begin{equation} \label{item2-deltaG}
        t_{e}(G) \leq t(G) \leq t_{e}(G) + 1.
    \end{equation} 

    \item
    \label{t_e(G)+2}
    If $\delta(G) = 1$, then \begin{equation} \label{item3-deltaG}
      t_{e}(G) \leq t(G) \leq t_{e}(G) + 2.  
    \end{equation} 
   
\end{enumerate}

\end{theorem}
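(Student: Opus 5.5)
\emph{Plan.} In all three parts the lower bound $t_e(G)\le t(G)$ is immediate from Definition~\ref{def:3}, since a $G$-CFF is in particular a $G$-ECFF. So only the upper bounds need work. For each one I will start from a $G$-disjunct matrix $A$ with $t_e(G)$ rows and append at most two rows so that the result is also $G$-Sperner. Appending rows never destroys the cover-free property for an edge, so the augmented matrix stays $G$-disjunct. Throughout, $G$ has no isolated vertices, as assumed earlier.

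\textbf{Part \ref{delta=2}.} Let $\delta(G)\ge 2$ and let $A$ be a $G$-disjunct matrix with $t_e(G)$ rows. By Proposition~\ref{GbarCFF=GCFF-nec-suf}, $A$ is already $\overline{G}$-disjunct, so $t(G)\le t_e(G)$.

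\textbf{Local version of the Sperner argument.} The proof of Proposition~\ref{GbarCFF=GCFF-nec-suf} shows something slightly stronger, which I will use for the other two parts. Suppose $\{u,v\}$ is an edge and $v$ has a neighbour $y\neq u$. Applying cover-freeness of $\{v,y\}$ to the vertex $u$ gives a row $j$ with $A[j,v]=0$ and $A[j,u]=1$, so $B_u\not\subseteq B_v$. Consequently:
\begin{itemize}
\item If both endpoints of an edge have degree at least $2$, the edge is automatically Sperner.
\item If $u$ is a leaf and its neighbour $v$ has degree at least $2$, we automatically get $B_u\not\subseteq B_v$. The only possible failure is $B_v\subseteq B_u$.
\end{itemize}

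\textbf{Part \ref{t_e(G)+1}.} Since $G$ has no single-edge component, every leaf is adjacent to a vertex of degree at least $2$. I append one row with a $1$ in every column of a non-leaf vertex and a $0$ in every column of a leaf. For a leaf $u$ with neighbour $v$, this row has $A[\cdot,v]=1$ and $A[\cdot,u]=0$, so $B_v\not\subseteq B_u$. Together with the bullet points above, every edge is now Sperner, giving $t(G)\le t_e(G)+1$.

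\textbf{Part \ref{t_e(G)+2}.} Now $G$ may also have single-edge components $\{u_i,v_i\}$, where both endpoints are leaves and nothing from the rest of $A$ forces either direction. I orient each such component as $(u_i,v_i)$ and append two rows.
\begin{itemize}
\item Row $r_1$ has a $1$ at every non-leaf vertex and at every $u_i$, and a $0$ at every $v_i$ and at every leaf adjacent to a non-leaf.
\item Row $r_2$ has a $1$ exactly at the vertices $v_i$ and $0$ elsewhere.
\end{itemize}
For each single-edge component, $r_1$ gives $B_{u_i}\not\subseteq B_{v_i}$ and $r_2$ gives $B_{v_i}\not\subseteq B_{u_i}$. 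For a leaf attached to a non-leaf, $r_1$ plays the role of the row used in Part~\ref{t_e(G)+1}. All remaining edges are handled by the bullet points above. Hence $t(G)\le t_e(G)+2$.

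The main point needing care is the bookkeeping in the last two parts: the appended rows must be checked to cover every type of edge, and the automatic direction for leaf–non-leaf edges must genuinely come from cover-freeness of a different edge at the non-leaf endpoint.
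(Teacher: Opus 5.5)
Your proposal is correct and follows essentially the same route as the paper: Part 1 via Proposition~\ref{GbarCFF=GCFF-nec-suf}, Part 2 by appending the row with $0$'s at leaves and $1$'s elsewhere and getting the other Sperner direction from cover-freeness of a second edge at the non-leaf endpoint, and Part 3 by appending two rows that act as $I_2$ on each single-edge component. Your $r_1,r_2$ are exactly the paper's Part 3 construction, with its ``arbitrary'' second row taken to be $0$ outside the single-edge components.
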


\begin{proof}
    We already know, $t_{e}(G) \leq t(G)$. Let $A$ be a $t_e(G) \times |V(G)|$  $G$-disjunct matrix. In the proof of Proposition \ref{GbarCFF=GCFF-nec-suf}, we show that $A$ is Sperner for an edge $\{u, v\}$ if $\deg(u), \deg(v) \geq 2$. Now, we prove each of the statements: 
\begin{enumerate}
    \item Since by Proposition \ref{GbarCFF=GCFF-nec-suf} if $\delta(G) \geq2$, every $G$-ECFF is also $G$-CFF. This implies $t(G) \leq t_{e}(G)$, which concludes this case. 

    \item \label{deg1-no-single-component} Let $A'$ be a $(t_e(G) + 1) \times |V(G)|$ matrix created by appending a new row to $A$ with $0$'s in the columns corresponding to vertices of degree $1$, and $1$'s in the remaining columns. 
    It is enough to show that $A'$ is Sperner for any edge with degree of a vertex in it being $1$.
    Let $\{u, v\}$ be an edge such that $\deg(v) = 1$, and let $w$ be another neighbor of $u$. Such $w$ exists because no connected component of $G$ is a single edge. Then, for the edge $\{u, w\}$ and the vertex $v$, there must exist a row $i$ in $A$ such that $A[i, u] = 0, A[i, w] = 0$, and $A[i, v] = 1$. The last row of $A'$ contains a $0$ in the column corresponding to $v$ and a $1$ in the column corresponding to $u$. Thus, $A'$ is Sperner for $\{u, v\}$. This yields the bound $t(G) \leq t_e(G) + 1$.

    \item The only case we need to consider is when $\delta(G) = 1$ and $G$ has at least one connected component that is a single edge. Let $A''$ be a $(t_e(G) + 2) \times |V(G)|$ matrix by appending two new rows to $A$ for dealing with vertices of degree $1$ as follows. First, copy an identity matrix $I_2$ under the columns corresponding to the vertices of each single-edge component. 
    Then, for the vertices that are not in a single-edge component, add an extra row to the corresponding columns of $A$, in the same way as done for $A'$ in Case \ref{deg1-no-single-component}, and then add an arbitrary row to complete $t_{e}(G) + 2$ rows. Thus, $A''$ is $G$-Sperner, and since $A$ is $G$-disjunct, then $A''$ is $\overline{G}$-disjunct.
 This yields the bound $t(G) \leq t_e(G) + 2$.
\end{enumerate}
\end{proof}

\begin{notation}
\label{G+x}
Let $G$ be a graph and let $x \notin V(G)$. Then, $G + \{x\}$ is a graph where $V(G + \{x\}) = V(G) \cup \{x\}$ and $E(G + \{x\}) = E(G) \cup \left\{\{v, x\} : v \in V(G) \right\}$. 
\end{notation}

In other words, $G + \{x\}$ adds a universal vertex $x$ to $G$. A \textit{wheel} graph $W_n$ is a graph constructed by connecting a single universal vertex to all vertices of a cycle of length $n-1$. Thus, $W_n = C_{n-1} + \{n\}$ and $ K_{n + 1} = K_{n} + \{n + 1\}$.

\begin{theorem}
\label{star-cons-gen}
 Let $n \geq 3$. Let $G = ([1, n], E(G)$ be a graph with no isolated vertices. Then, 
    \begin{equation}
    \label{star-cons-gen-eq}
        \max\{t(1, n) + 1, t(G)\} \leq t(G + \{0\}) \leq t(G) + 1.
    \end{equation}
 
\end{theorem}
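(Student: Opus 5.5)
The plan is to prove the three inequalities separately: two lower bounds, one via Proposition \ref{subgraph-CFF} and one via a row-restriction argument, and the upper bound via an explicit one-row extension.

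\textbf{Lower bound $t(G) \leq t(G+\{0\})$.} Since $G$ is a subgraph of $G+\{0\}$, this is immediate from Proposition \ref{subgraph-CFF}.

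\textbf{Lower bound $t(1,n)+1 \leq t(G+\{0\})$.} Let $A$ be a $\overline{(G+\{0\})}$-disjunct matrix with $t$ rows, and let $Z$ be the set of rows $r$ with $A[r,0]=0$. I claim the submatrix of $A$ on rows $Z$ and the columns of $V(G)$ is $1$-disjunct.
\begin{itemize}
\item Take distinct $u,w \in V(G)$. Since $\{0,w\}$ is an edge of $G+\{0\}$ and $u \notin \{0,w\}$, cover-freeness gives a row $r$ with $A[r,0]=A[r,w]=0$ and $A[r,u]=1$.
\item This row lies in $Z$ and separates $u$ from $w$ in the required direction. By symmetry both directions occur, so the restriction is a $1$-CFF$(|Z|,n)$.
\item Hence $|Z| \geq t(1,n)$.
\end{itemize}
Moreover, $0$ is adjacent to some $v$, and the Sperner condition for $\{0,v\}$ forces a row with $A[r,0]=1$, which is outside $Z$. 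Therefore $t \geq |Z|+1 \geq t(1,n)+1$.

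\textbf{Upper bound $t(G+\{0\}) \leq t(G)+1$.} Start from a $\overline{G}$-disjunct $t(G)\times n$ matrix $A$. Append a new last row that is $0$ in every column of $V(G)$, and a new column for vertex $0$ that is $0$ in the first $t(G)$ rows and $1$ in the last row. Call the result $A^*$. By Theorem \ref{GbarCFF-1CFF}, $A$ is a $1$-CFF, and since $n \geq 3$ no column of $A$ is empty. The checks are as follows.
\begin{itemize}
\item \emph{Edges of $G$.} Sperner and cover-free conditions with respect to vertices of $G$ are inherited from $A$. Cover-freeness with respect to vertex $0$ is supplied by the last row.
\item \emph{Sperner for an edge $\{0,v\}$.} The last row has a $1$ at $0$ and a $0$ at $v$. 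Any row in which $A^v$ has a $1$ has a $0$ at $0$.
\item \emph{Cover-free for an edge $\{0,v\}$ with respect to $w \in V(G)\setminus\{v\}$.} Since $A$ is $1$-disjunct, some row among the first $t(G)$ has $A[r,v]=0$ and $A[r,w]=1$, and column $0$ is $0$ there.
\end{itemize}
Thus $A^*$ is $\overline{(G+\{0\})}$-disjunct with $t(G)+1$ rows.

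The only step that is not routine is the lower bound $t(1,n)+1$. Its key idea is to restrict to the rows where the universal vertex's column is $0$; after that everything follows directly from the definitions and Theorem \ref{GbarCFF-1CFF}.
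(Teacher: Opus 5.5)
Your proof is correct and follows the paper's argument: the same one-row, one-column extension of a $\overline{G}$-disjunct matrix gives the upper bound, the same restriction to the rows where column $0$ vanishes (plus the extra row forced by the Sperner condition on $\{0,v\}$) gives $t(1,n)+1$, and Proposition \ref{subgraph-CFF} gives $t(G)\le t(G+\{0\})$. The only difference is that you check the Sperner condition on the new edges directly, whereas the paper deduces it from $\delta(G+\{0\})\ge 2$ via Theorem \ref{GbarCFF=GCFF}; you also explicitly use the last row for cover-freeness of edges of $G$ with respect to vertex $0$, a case the paper leaves implicit.
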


\begin{proof}

Let $A$ be a $t \times n$ $\overline{G}$-disjunct matrix with $t = t(G)$. Let $M$ be a $(t+1) \times (n+1)$ matrix of the form $\begin{pmatrix}
\mathbf{0}_{t \times 1} & A \\
\mathbf{1}_{1 \times 1} & \mathbf{0}_{1 \times n}
\end{pmatrix}$, with columns labeled, in order, by $0, 1, \ldots, n$; and rows labeled as elements in $[1, t+1]$. We first show that $M$ is $\overline{G + \{0\}}$-disjunct. By Theorem \ref{GbarCFF-1CFF}, $A$ is $1$-disjunct. Thus, for any vertices $y, z \in [1, n]$, there exists a row $r \in [1, t]$ of $A$ such that $A[r, y] = 0$ and $A[r, z] = 1$. Thus, for $\{0, y\} \in E(G + \{0\})$, we have $M[r, 0] = 0, M[r, y] = 0,$ and $M[r, z] = 1$. So, $M$ is cover-free for $\{0 ,y\}$. Furthermore, since $A$ is also $G$-disjunct, for each edge $\{y, w\} \in E(G)$ and any vertex $v \in [1, n] \setminus \{y, w\}$, there exists a row $r \in [1, t]$ of $A$ such that $A[r, y] = 0, A[r, w] = 0,$ and $A[r, v] = 1$. So, $M[r, y] = 0, M[r, w] = 0,$ and $M[r, v] = 1$. Hence, $M$ is $(G + \{0\})$-disjunct. Thus, $t_{e}(G + \{0\}) \leq t(G) + 1$. Since $\delta(G) \geq 1$, $\delta(G + \{0\}) \geq 2$. Then, by Theorem \ref{GbarCFF=GCFF} part \ref{delta=2}, $t(G + \{0\}) = t_{e}(G + \{0\}) \leq t(G) + 1$ (Using Proposition \ref{GbarCFF=GCFF-nec-suf}, we also obtain that $M$ is $\overline{(G + \{0\}}$-disjunct). 

Let $M$ be a $\overline{(G + \{0\})}$-disjunct matrix. Let $I$ be the set of rows in $\mathcal{M}$ such that $M[i, 0] = 0 $ for all $i \in I$. Then, the submatrix $M'$ of $M$ restricted to the columns $M^1, M^2, \ldots, M^{n}$ and rows in $I$ must be a $|I| \times n$ $1$-disjunct matrix. Therefore, $|I| \geq t(1, n)$. Since there must be at least one row $r$ such that $A[r, 0] = 1$ and $A[r, y] = 0$ for $y \in [1, n]$, then $t(G + \{0\}) \geq t(1, n) + 1$. Furthermore, since $G \subseteq G + \{0\}$; by Proposition \ref{subgraph-CFF}, $t(G) \leq t(G + \{0\})$. Thus, we have the desired lower bound.
\end{proof}

Theorem~\ref{star-cons-gen} generalizes the bound $t(2,n) \leq t(2,n-1) + 1$ in Equation \ref{simple-additive-bound}, which can also be written as $t(K_n) \leq t(K_{n-1}) + 1$. In addition, if $c = t(G) - t(1, n)$, then from Theorem \ref{star-cons-gen}, it follows that $t(1, n) + c \leq t(G + \{0\}) \leq t(G) + 1 = t(1, n) + c + 1$. For the case of $c = 1$, the next corollary provides an exact bound of $t(G + \{0\})$ in terms of $t(G)$.

\begin{corollary}
\label{star-gen-cons-special-case}
    Let $G = ([1, n], E)$ be a graph without isolated vertices. If $t(G) - t(1, n) = 1$, then $t(G + \{0\}) = t(G) + 1$. 
\end{corollary}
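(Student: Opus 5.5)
The upper bound $t(G + \{0\}) \leq t(G) + 1$ is already given by Theorem \ref{star-cons-gen}, so only the matching lower bound $t(G + \{0\}) \geq t(G) + 1 = t(1, n) + 2$ needs proof. Theorem \ref{star-cons-gen} supplies only $t(G + \{0\}) \geq t(1,n)+1$, so I would refine the counting argument from its proof.

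Let $M$ be a $\overline{(G+\{0\})}$-disjunct matrix with $T$ rows. Let $I$ be the set of rows with $M[i,0]=0$ and $J$ the set of rows with $M[i,0]=1$. The argument in Theorem \ref{star-cons-gen} shows that the restriction of columns $1,\ldots,n$ to rows $I$ is $1$-disjunct: for $y,z\in[1,n]$, the edge $\{0,y\}$ and vertex $z$ force a row with $0$ in columns $0,y$ and $1$ in column $z$. I would strengthen this to show that this restriction $M'$ is in fact $\overline{G}$-disjunct. For an edge $\{y,w\}\in E(G)$ and a vertex $v\in[1,n]\setminus\{y,w\}$, cover-freeness of $M$ gives a row with $0$ in $y,w$ and $1$ in $v$. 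Such a row need not lie in $I$, and this is the main obstacle. The Sperner property for edges of $G$ is already implied by $M'$ being $1$-disjunct, so only this cover-free condition remains.

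To handle the obstacle, I would argue by case on $|J|$. Also, $J\neq\emptyset$, because vertex $0$ must not be covered by $y\cup z$.

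\textbf{Case $|J|\geq 2$.} Then $T=|I|+|J|\geq t(1,n)+2$ directly, since $|I|\geq t(1,n)$.

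\textbf{Case $|J|=1$, with $J=\{r\}$.} If the needed row for $(\{y,w\},v)$ is $r$, then $M[r,y]=M[r,w]=0$. I would then show that row $r$ restricted to $[1,n]$ can be deleted or absorbed. Consider the matrix $M'$ on $I$, augmented by the row $r$ restricted to columns $1,\ldots,n$. This is a $\overline{G}$-disjunct matrix on $|I|+1=T$ rows, which only gives $T\geq t(G)$. That is not enough, so instead I would exploit row $r$ more concretely. Note that $\{0,v\}$ is an edge, so for each other vertex $u$ there is a row with $0$ in $0,v$ and $1$ in $u$, which again lies in $I$.

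The cleanest route I expect is the following. If $T=t(1,n)+1$ with $|I|=t(1,n)$ and $|J|=1$, then $M'$ is a $1$-CFF on $t(1,n)$ rows. Suppose in addition that $M'$ is not $G$-disjunct. Then some triple $(y,w,v)$ is separated only in row $r$, so $M[r,v]=1$ and $M[r,y]=M[r,w]=0$. Together with the $\overline{G}$-disjunctness of $M$ restricted to the rows $I\cup\{r\}$ on columns $[1,n]$, this gives a $G$-CFF with $t(1,n)+1=t(G)$ rows, which is not a contradiction by itself. So the real contradiction should instead come from considering the whole matrix $M$ as a $G$-CFF on $T=t(1,n)+1$ rows after deleting column $0$. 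I would then derive a contradiction from the fact that column $0$ is the zero vector outside row $r$, while $t(G)=t(1,n)+1$ forces tightness. I expect this tightness argument, showing that $|J|=1$ and $|I|=t(1,n)$ cannot coexist, to be the crux. My first attempt would be to show that the rows $I$ alone must already be $G$-disjunct, using the edge $\{0,y\}$ for the relevant pairs, which would give $|I|\geq t(G)$ and hence $T\geq t(G)+1$.
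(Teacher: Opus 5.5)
Your framework matches the paper's: split the rows of $M$ into $I$ (entry $0$ in column $0$) and $J$ (entry $1$ in column $0$), note $J\neq\emptyset$, observe that $M'$ (rows $I$, columns $[1,n]$) is $1$-disjunct so $|I|\ge t(1,n)$, and dispose of $|J|\ge 2$ by counting. But the case $|J|=1$, which you correctly flag as the crux, is never proved. The text there circles between attempts, and the closing claim that the rows of $I$ alone are $G$-disjunct is asserted without a reason. Your earlier use of the edge $\{0,y\}$ does not supply it: for each $z$ it gives a row with $0$ in columns $0,y$ and $1$ in column $z$. Those rows lie in $I$ and give only $1$-disjunctness, not cover-freeness for edges of $G$. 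As written, the proposal does not rule out a triple $(\{y,w\},v)$ being separated only in row $r$.

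The missing observation is that when $J=\{r\}$, row $r$ is zero in every column $y\in[1,n]$. Since $\{0,y\}$ is an edge of $G+\{0\}$, the Sperner condition $B_0\not\subseteq B_y$ requires a row with $1$ in column $0$ and $0$ in column $y$. The only row with a $1$ in column $0$ is $r$. Alternatively, push the fact you used to get $J\neq\emptyset$: for every edge $\{y,w\}\in E(G)$, vertex $0$ must be uncovered, which forces $M[r,y]=M[r,w]=0$, and every $y$ lies in such an edge because $G$ has no isolated vertices. Hence $r$ can never witness a triple $(\{y,w\},v)$ with $v\in[1,n]$, since that needs $M[r,v]=1$. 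So all these witnesses lie in $I$, and $M'$ is $G$-disjunct. Being $1$-disjunct, it is also $G$-Sperner, so it is $\overline{G}$-disjunct. This gives $|I|\ge t(G)=t(1,n)+1$ and $T\ge t(1,n)+2=t(G)+1$. With this one step added, your argument is complete and essentially the paper's proof, which phrases the same reasoning as a contradiction from assuming $T=t(1,n)+1$.
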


\begin{proof}
    Since $t(G) = t(1, n) + 1$, by Theorem \ref{star-cons-gen} we have $t(1, n) + 1 \leq t(G + \{0\}) \leq t(1, n) + 2$. Suppose, for the sake of contradiction, that $M$ is a $(t(1, n) + 1) \times (n+1)$ $\overline{(G + \{0\})}$-disjunct matrix. Let $I$ be the set of rows in $\mathcal{M}$ such that $M[i, 0] = 0 $ for all $i \in I$. Then, the submatrix $M'$ of $M$ restricted to the columns $M^1, M^2, \ldots, M^{n}$ and rows in $I$ must be a $|I| \times n$ $1$-disjunct matrix. Therefore, $|I| \geq t(1, n)$. Since $M$ has $t(1, n) + 1$ rows, we have $|I| = t(1, n)$ and the remaining row $r \notin I$ must be the only row such that $M[r, 0] = 1$ and $M[r, y] = 0$ for all $y \in [1, n]$. This structure also implies $M'$ must be $\overline{G}$-disjunct, but $t(G) = t(1, n) + 1$ and $M'$ has $t(1, n)$ rows, which leads to a contradiction. Thus, $t(G + \{0\}) = t(1, n) + 2 = t(G) + 1$.
\end{proof}

\section{Tightness of various lower and upper bounds}
\label{Graphs-meeting-extreme-bounds}
 In this section, we provide families of graphs $G$ for which $t(G)$ achieves some of the extreme bounds presented in Section \ref{CFFonGraphs}. In particular, we show that for a star graph $S_n$, $t(S_n)$ meets the upper bound of Equation \ref{item2-deltaG} in Theorem \ref{GbarCFF=GCFF} and a subfamily of star graphs where $t(S_n)$ meets the lower bound of Equation \ref{trivialbound-eq} in Theorem \ref{trivial-bound}. Furthermore, $S_n$ also achieves the upper bound of Equation \ref{star-cons-gen-eq} in Theorem \ref{star-cons-gen}. We also provide an infinite families of graphs achieving the upper bound of Equation \ref{item3-deltaG} in Theorem \ref{GbarCFF=GCFF}.

\begin{definition}
\label{def:star}
    A star $S_n$ is the complete bipartite graph $K_{1, n-1}$. 
\end{definition}
Thus, a star graph $S_n$ is $G + \{x\}$ where $G$ is a graph with $n-1$ vertices and no edges.

\begin{Construction}
\label{star-construction}
Let $A$ be a $t \times (n-1)$ binary matrix. Let $M_1$ be a matrix with columns labeled as $ 0, 1, \ldots, n-1 $ such that $M_1^0 = \mathbf{0}_{t \times 1}$ and $M_1^i = A^i$ for all $i \in [1, n-1]$. Let $M_2$ be the matrix obtained by appending a row to $M_1$ such that $M_2[t+1, 0] = 1$ and $M_2[t+1, i] = 0$ for all $i \in [1, n-1]$.
\end{Construction}

\begin{theorem}
\label{star-edgecff-optimal}
    Let $n \geq 3$ and $G = S_n$ be a star graph on $n$ vertices. Then $t_{e}(S_n) = t(1, n-1)$.
\end{theorem}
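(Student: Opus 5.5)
We prove the two inequalities $t_e(S_n) \leq t(1, n-1)$ and $t_e(S_n) \geq t(1, n-1)$ separately. Throughout, label the center of $S_n$ by $0$ and the leaves by $1, \ldots, n-1$, so that $E(S_n) = \{\{0, i\} : i \in [1, n-1]\}$.

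\textbf{Upper bound.} Let $A$ be a $t \times (n-1)$ $1$-disjunct matrix with $t = t(1, n-1)$. Apply Construction \ref{star-construction} and take the matrix $M_1$, whose center column is all zeros and whose leaf columns are those of $A$. We check that $M_1$ is $S_n$-disjunct. Fix an edge $\{0, i\}$ and a vertex $v_0 \in [1, n-1] \setminus \{i\}$. Since $A$ is $1$-disjunct, there is a row $l$ with $A[l, i] = 0$ and $A[l, v_0] = 1$. Because $M_1[l, 0] = 0$, this row satisfies $M_1[l,0] = M_1[l,i] = 0$ and $M_1[l,v_0] = 1$, which is exactly the cover-free condition. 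The only vertices outside an edge $\{0,i\}$ are leaves, so no other case arises. Hence $t_e(S_n) \leq t(1, n-1)$.

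\textbf{Lower bound.} Let $M$ be any $t \times n$ $S_n$-disjunct matrix. The goal is to show that its restriction $A$ to the leaf columns $1, \ldots, n-1$ is $1$-disjunct, since then $t \geq t(1, n-1)$. Take distinct leaves $i$ and $j$. Because $n \geq 3$, the pair $\{0, i\}$ is an edge and $j \notin \{0, i\}$. So there is a row $l$ with $M[l, i] = 0$ and $M[l, j] = 1$, and swapping the roles of $i$ and $j$ gives a row with the reverse pattern. Therefore no leaf column of $A$ is contained in another, and $A$ is $1$-disjunct. Note also that the leaf columns must be pairwise distinct, since $j \not\subseteq i$ for any two leaves; this is automatic from the same row.

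Neither direction involves a real obstacle. The only point needing care is that the center column plays no role in the lower bound: the cover-free condition for the edges $\{0, i\}$ alone already forces a Sperner family on the $n-1$ leaves. Combining the two inequalities gives $t_e(S_n) = t(1, n-1)$.
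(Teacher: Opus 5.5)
Your proof is correct and follows the paper's own argument. The upper bound puts an all-zero center column alongside an optimal $1$-disjunct matrix on the leaves (Construction~\ref{star-construction}). The lower bound uses the cover-free condition for the edges $\{0,i\}$ and $\{0,j\}$ to show that the leaf columns form a $1$-disjunct matrix.
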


\begin{proof}
    We give label $0$ to the universal vertex and labels $1, \ldots, n-1$ to the other vertices. We then create a $t \times n$ matrix $M_1$ by following the Construction \ref{star-construction} with $A$ being a $t \times (n-1)$ $1$-disjunct matrix, with $t = t(1, n-1)$. For every edge $e = \{0, i\}$ with $i \in [1, n-1]$ and any vertex $j \in [1, n-1] \setminus \{i\}$, there exists a row $l \in [1, t(1, n-1)]$ such that $M_{1}[l, 0] = 0, M_{1}[l, i] = 0$ and $M_{1}[l, j] = 1$ since $M_{1}^i$ and $M_{1}^j$ are the $i^{th}$ and $j^{th}$ columns of $A$. Thus, $M_1$ is an $S_n$-disjunct matrix, which implies $t_{e}(S_n) \leq t(1, n-1)$. 
    
    Now, let $M$ be a $t^* \times n$ $S_n$-disjunct matrix. Let $i, j \in [1, n-1], i \neq j$. Then since $\{0, i\}$ is an edge, there must be a row $l \in [1, t^*]$ such that $M[l, 0] = 0, M[l, i] = 0,$ and $M[l, j] = 1$. Similarly, since $\{0, j\}$ is an edge, there must be a row $k \in [1, t^*]$ such that $M[k, 0] = 0, M[k, j] = 0,$ and $M[k, i] = 1$. Therefore, the submatrix $[M^1 \; M^2 \; M^3 \; \cdots \; M^{n-1}]$ is $1$-disjunct, which implies $t^* \geq t(1, n-1)$.    
\end{proof}

We show that $t(S_n)$ meets the upper bound of Equation \ref{item2-deltaG} in Theorem \ref{GbarCFF=GCFF}.
    
\begin{corollary}
\label{star-optimum}
    Let $n \geq 3$ and $G = S_n$ be a star graph on $n$ vertices. Then $t(S_n) = t_{e}(S_n) + 1 = t(1, n-1) + 1$.
\end{corollary}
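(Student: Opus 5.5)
The plan is to prove the two inequalities $t(S_n) \leq t(1,n-1)+1$ and $t(S_n) \geq t(1,n-1)+1$ separately. The middle equality then follows from Theorem \ref{star-edgecff-optimal}, which gives $t_e(S_n)=t(1,n-1)$.

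For the upper bound, use Construction \ref{star-construction} with $A$ a $t(1,n-1)\times(n-1)$ $1$-disjunct matrix, giving the matrix $M_2$. By the proof of Theorem \ref{star-edgecff-optimal}, the first $t(1,n-1)$ rows already make $M_2$ cover-free for every edge $\{0,i\}$. It remains to check that $M_2$ is Sperner for each edge $\{0,i\}$.
\begin{itemize}
\item The appended row has a $1$ in column $0$ and a $0$ in column $i$.
\item Column $i$ of $A$ is nonzero, since $A$ is $1$-disjunct with $n-1\geq 2$ columns. So some row has a $1$ in column $i$ and a $0$ in column $0$.
\end{itemize}
Hence $M_2$ is $\overline{S_n}$-disjunct with $t(1,n-1)+1$ rows. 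Alternatively, the bound follows directly from Theorem \ref{star-cons-gen} applied to the edgeless graph, or from Theorem \ref{GbarCFF=GCFF} part \ref{t_e(G)+1}, since $S_n$ with $n\geq 3$ has no single-edge component.

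For the lower bound, let $M$ be any $\overline{S_n}$-disjunct matrix with $t^*$ rows, and let $I$ be the set of rows where column $0$ is $0$.
\begin{itemize}
\item \textbf{The rows in $I$ force $t(1,n-1)$ rows.} For distinct $i,j\in[1,n-1]$, cover-freeness for the edge $\{0,i\}$ gives a row with $M[\cdot,0]=M[\cdot,i]=0$ and $M[\cdot,j]=1$, and this row lies in $I$. Hence the restriction of $M$ to rows $I$ and columns $1,\dots,n-1$ is $1$-disjunct, so $|I|\geq t(1,n-1)$.
\item \textbf{At least one row lies outside $I$.} Sperner for the edge $\{0,1\}$ requires a row with $M[\cdot,0]=1$ and $M[\cdot,1]=0$, and such a row is not in $I$.
\end{itemize}
Therefore $t^*\geq |I|+1\geq t(1,n-1)+1$.

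The main (mild) obstacle is noticing that the cover-free rows for the edges $\{0,i\}$ all have $0$ in column $0$, so the row required by the Sperner condition is genuinely an extra row.
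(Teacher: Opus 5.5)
Your proof is correct and follows the paper's argument essentially step for step. The upper bound uses the same matrix $M_2$ from Construction \ref{star-construction}. The lower bound uses the same split: the rows with $0$ in column $0$ must form a $1$-disjunct matrix on columns $1,\dots,n-1$, and the Sperner condition on an edge $\{0,i\}$ forces at least one further row.

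One side remark should be dropped: Theorem \ref{star-cons-gen} assumes $G$ has no isolated vertices, so it cannot be applied to the edgeless graph. An edgeless graph imposes no conditions, so applying it there would give an absurdly small bound on $t(S_n)$. Your other alternative, via Theorem \ref{GbarCFF=GCFF} part \ref{t_e(G)+1}, is valid.
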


\begin{proof}
 We construct the matrix $M_2$ using Construction~\ref{star-construction}, where $A$ is a $t \times (n-1)$ $1$-disjunct matrix where $t = t(1, n-1)$. In the proof of Theorem~\ref{star-edgecff-optimal}, we obtain that $M_1$ is $S_n$-disjunct. It guarantees the existence of a row $l$ such that $M_2[l, 0] = 0$ and $M_2[l, i] = 1$ for any $i \in [1, n-1]$. For the last row $l'$, we have $M_2[l', 0] = 1$ and $M_2[l', i] = 0$ for all $i \in [1, n-1]$. This implies $M_2$ is $S_n$-Sperner. Thus, $M_2$ is an $\overline{S_n}$-disjunct matrix, which implies $t(S_n) \leq t(1, n-1) + 1$.

 Let $M$ be an $t^* \times n$ $\overline{S_n}$-disjunct matrix with $M^0$ having weight $x$. Let $M'$ be the submatrix of $M$ corresponding to the rows of $M$ that contain a zero in column $M^0$ and to the columns of vertices $1, 2, \ldots, n-1$. We claim $M'$ is $1$-disjunct. Indeed, for any $i, j \in [1, n-1], i \neq j$, since $\{0, i\}$ and $\{0, j\}$ are edges and $M$ is $\overline{S_n}$-disjunct, there must be row indices $k, l$ such that $M[k, 0] = 0, M[k, i] = 0, M[k, j] = 1$ and $M[l, 0] = 0, M[l, j] = 0, M[l, i] = 1$. Thus, $M'[k, i] = 0, M'[k, j] = 1$ and $M'[l, j] = 0, M'[l, i] = 1$. This implies, $M'$ is $1$-disjunct. So, $t^* - x \geq t(1, n-1)$. Also, $x \geq 1$ since there must exist a row that has a $0$ in the column $M^i, i \in [1, n-1]$ and a $1$ in the column $M^0$. Hence, $t^* = t^* - x + x \geq t(1, n-1) +1$. Thus, $t(S_n) = t(1, n-1) + 1$ and by Theorem \ref{star-edgecff-optimal}, $t(1, n-1) = t_{e}(S_n)$.
 
\end{proof}

    We provide an example of an optimal $\overline{S_n}$-disjunct matrix in Figure \ref{fig:S9-CFF}.

\begin{figure}[ht]
    \centering
    \begin{subfigure}[b]{0.3\textwidth}
        \centering
 \tikzset{every picture/.style={line width=1pt}}       

\begin{tikzpicture}[scale=1]

\tikzset{
    vertex/.style={circle, fill=black, minimum size=10pt, inner sep=0pt, text=white},
    centervertex/.style={circle, fill=black, minimum size=10pt, inner sep=0pt, text=white}
}

\node[centervertex] (v0) at (0,0) {0};

\node[vertex] (v1) at (-1.5,-1.5) {1};
\node[vertex] (v2) at (0,-2) {2};
\node[vertex] (v3) at (1.5,-1.5) {3};
\node[vertex] (v4) at (2,0) {4};
\node[vertex] (v5) at (1.5,1.5) {5};
\node[vertex] (v6) at (0,2) {6};
\node[vertex] (v7) at (-1.5,1.5) {7};
\node[vertex] (v8) at (-2,0) {8};

\foreach \i in {1,...,8}
    \draw (v0) -- (v\i);

\end{tikzpicture}
   
       \caption{Star graph $S_9$}
    \end{subfigure}
    \hfill
    \begin{subfigure}[b]{0.5\textwidth}
    \vspace{-30cm}
  \centering
\raisebox{1.9cm}{$
\begin{bNiceMatrix}[first-col,first-row]
  & \{0\} & V(S_9) \setminus \{0\} \\[3pt]
  & \fbox{\rule{0pt}{0.2cm}\makebox[2.3cm]{$\mathbf{0}_{5\times 1}$}}
  & \fbox{\rule{0pt}{0.2cm}\makebox[2.3cm]{$1$-CFF$(5,8)$}} \\[3.7pt]
  & \fbox{\rule{0pt}{0.2cm}\makebox[2.3cm]{$\mathbf{1}_{1 \times 1}$}}
  & \fbox{\rule{0pt}{0.2cm}\makebox[2.3cm]{$\mathbf{0}_{1\times 8}$}}
\end{bNiceMatrix}
$}
        \caption{A $6 \times 9$ $\overline{S_9}$-disjunct matrix}
    \end{subfigure}
    \caption{An optimal $\overline{S_9}$-disjunct matrix}
    \label{fig:S9-CFF}
\end{figure}

The next corollary provides an infinite family of star graphs $S_n$ such that $t(S_n) = t(1, n)$, showing that the lower bound in Equation \ref{trivialbound-eq} is tight. 

\begin{corollary}
\label{star-meeting-trivial-lower-bound}
    If $n = \binom{x}{\lfloor{\frac{x}{2}}\rfloor} + 1$ for any $x \in \mathbb{N}$, then $t(S_n) = t(1,n)$. 
\end{corollary}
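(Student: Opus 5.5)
This follows by combining Corollary \ref{star-optimum} with the stepping behaviour of $t(1,\cdot)$ recorded right after Equation \ref{max-sperner-family}. By Corollary \ref{star-optimum}, for every $n \geq 3$ we have $t(S_n) = t(1, n-1) + 1$. It therefore suffices to show that $t(1,n) = t(1,n-1)+1$ whenever $n = \binom{x}{\lfloor x/2 \rfloor} + 1$.

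The key step is to evaluate $t(1,\cdot)$ at both arguments using Equation \ref{max-sperner-family}. First, $n - 1 = \binom{x}{\lfloor x/2\rfloor}$. Since the central binomial coefficients $\binom{l}{\lfloor l/2\rfloor}$ are strictly increasing for $l \geq 1$, we get $t(1, n-1) = x$. Second, $n = \binom{x}{\lfloor x/2\rfloor} + 1$ strictly exceeds $\binom{x}{\lfloor x/2\rfloor}$, and $n \leq \binom{x+1}{\lfloor (x+1)/2\rfloor}$, because $\binom{x+1}{\lfloor (x+1)/2\rfloor} \geq \binom{x}{\lfloor x/2\rfloor} + 1$ by strict monotonicity. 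Hence $t(1,n) = x+1$. This is exactly the remark in the paper that $t(1,n) = t(1,n-1)+1$ for such $n$. Combining, $t(S_n) = x + 1 = t(1,n)$.

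The only real obstacle is the range of validity. Corollary \ref{star-optimum} needs $n \geq 3$, that is, $x \geq 2$. For $x = 1$ we get $n = 2$, and $S_2 = K_2$ must be handled directly. A $K_2$-CFF$(t,2)$ only needs two incomparable sets, since the edge covers every other vertex vacuously. So $t(S_2) = 2 = t(1,2)$, and the statement still holds. If the paper's convention has $\mathbb{N}$ starting at $0$, then $x = 0$ gives $n = 2$ as well, which is the same case. Monotonicity of $\binom{l}{\lfloor l/2\rfloor}$ for $l \geq 1$ is standard: the ratio of consecutive terms is $2$ or $\frac{2l+2}{l+2} > 1$ according to parity. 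So the proof is essentially a two-line substitution once these edge cases are noted.
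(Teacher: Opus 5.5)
Your proof is correct and follows the paper's own argument: Corollary \ref{star-optimum} gives $t(S_n) = t(1,n-1)+1 = x+1$, and Equation \ref{max-sperner-family} gives $t(1,n) = x+1$. Your monotonicity check for $t(1,n)=x+1$ and your separate treatment of $n=2$, where Corollary \ref{star-optimum} does not apply, are details the paper leaves implicit, and both are handled correctly.
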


\begin{proof}
    If $n = \binom{x}{\lfloor{\frac{x}{2}}\rfloor} + 1$, by Equation \ref{max-sperner-family}, $t(1, n-1) = t\!\left(1, \binom{x}{\left\lfloor \tfrac{x}{2} \right\rfloor}\right) = x$.  Then, by Corollary \ref{star-optimum}, $t(S_n) = t(1, n-1) + 1 = x + 1 = t(1, n)$. 
\end{proof}

We show next that $S_n$ is an infinite family of graphs satisfying the upper bound of Equation \ref{star-cons-gen-eq}.

\begin{corollary}
\label{Sn+x-optimum}
   For $n \geq 3$, $t(S_n + \{x\}) = t(S_n) + 1$. 
\end{corollary}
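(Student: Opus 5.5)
The plan is to deduce the statement from Theorem \ref{star-cons-gen} and Corollary \ref{star-gen-cons-special-case}, after splitting on the value of $t(S_n)-t(1,n)$. By Corollary \ref{star-optimum}, $t(S_n)=t(1,n-1)+1$. By Equation \ref{max-sperner-family}, either $t(1,n)=t(1,n-1)$, or $t(1,n)=t(1,n-1)+1$; the latter happens exactly when $n-1=\binom{x}{\floor{x/2}}$ for some $x$. So $t(S_n)-t(1,n)$ is either $1$ or $0$.

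\textbf{Case 1: $t(1,n)=t(1,n-1)$.} Then $t(S_n)-t(1,n)=1$. Since $S_n$ has no isolated vertices, Corollary \ref{star-gen-cons-special-case} applies directly and gives $t(S_n+\{x\})=t(S_n)+1$.

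\textbf{Case 2: $n-1=\binom{x}{\floor{x/2}}$.} Here $t(S_n)=t(1,n)$, which is Corollary \ref{star-meeting-trivial-lower-bound}, so Corollary \ref{star-gen-cons-special-case} is not available. Theorem \ref{star-cons-gen} only yields
\[
t(1,n)+1 \le t(S_n+\{x\}) \le t(S_n)+1 = t(1,n)+1 .
\]
The lower bound $t(1,n)+1$ already coincides with the upper bound $t(S_n)+1$, so equality holds. In fact the lower bound of Theorem \ref{star-cons-gen} suffices in both cases, because $\max\{t(1,n)+1,t(S_n)\}$ equals $t(S_n)+1$ exactly when $t(S_n)=t(1,n)$.

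\textbf{Direct argument for Case 1.} Alternatively, one can argue directly on the structure, mirroring the proof of Corollary \ref{star-optimum}. The graph $S_n+\{x\}$ consists of two adjacent universal vertices $0$ and $x$, together with $n-1$ leaves forming an independent set, each adjacent to both $0$ and $x$.
\begin{itemize}
\item Take rows where columns $0$ and $x$ are both $0$. For any two leaves $i,j$, the cover-free condition for the edges $\{0,i\}$ and $\{x,i\}$ only gives separate rows, one with $0$ in column $0$ and one with $0$ in column $x$. So instead restrict to the rows with a $0$ in column $0$. On these rows, the columns of $x$ and of the leaves form a $\overline{S_n}$-type structure: cover-freeness for $\{0,i\}$ against $x$ and against $j$, and for $\{0,x\}$ against every leaf.
\item This forces at least $t(S_n)$ rows with $M[\cdot,0]=0$, provided the restricted matrix is $\overline{S_n}$-disjunct with centre $x$.
\item One extra row is needed with $M[\cdot,0]=1$ and $0$ in column $x$, coming from the Sperner condition for $\{0,x\}$.
\end{itemize}

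The main obstacle is verifying that the restricted matrix is Sperner for the edges $\{x,i\}$. The required row with a $1$ at $x$ and a $0$ at $i$ comes from the cover-free condition for $\{0,i\}$ against $x$, so it lies among the rows with a $0$ in column $0$. The row with a $0$ at $x$ and a $1$ at $i$ comes from the cover-free condition for $\{0,x\}$ against $i$. This matches the argument in Corollary \ref{star-gen-cons-special-case}, so I would rely on that corollary rather than redo it.
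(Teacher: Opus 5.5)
Your main argument is the paper's proof: when $n-1$ is a central binomial coefficient, $t(S_n)=t(1,n)$ and the two bounds of Theorem \ref{star-cons-gen} coincide; otherwise $t(S_n)-t(1,n)=1$ and Corollary \ref{star-gen-cons-special-case} finishes. Two of your asides, which the proof does not use, are wrong, though. First, the lower bound of Theorem \ref{star-cons-gen} does not suffice in Case 1, since there $\max\{t(1,n)+1,t(S_n)\}=t(S_n)$. Second, in your ``direct argument'' the row covering $j$ for the edge $\{x,i\}$ need not have a $0$ in column $0$. Corollary \ref{star-gen-cons-special-case} gets this only under the contradiction hypothesis of exactly $t(1,n)+1$ rows, which forces the unique row with a $1$ in column $0$ to be zero in every other column.
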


\begin{proof}
   By Theorem \ref{star-cons-gen}, $t(1, n) + 1 \leq t(S_n + \{x\}) \leq t(S_n) + 1$. By Corollary \ref{star-optimum}, $t(S_n) = t(1, n-1) + 1$. If $n = \binom{x}{\lfloor{\frac{x}{2}}\rfloor} + 1$ for some $x \in \mathbb{N}$, then by Corollary \ref{star-meeting-trivial-lower-bound}, $t(S_n) = t(1, n)$ and it implies $t(S_n + \{x\}) = t(S_n) + 1$. For other values of $n$, $t(1, n) = t(1, n-1)$. It further implies $t(S_n) -t(1, n) = 1$. Then by Corollary \ref{star-gen-cons-special-case}, $t(S_n + \{x\}) = t(S_n) + 1$.
\end{proof}

 We now show that there exists an infinite family of graphs meeting the upper bound of Equation \ref{item3-deltaG} in Theorem \ref{GbarCFF=GCFF}. 
 
\begin{notation}
\label{En}
     Let $\mathcal{E}_n$, for $n$ even, be a graph with $n$ vertices and $\frac{n}{2}$ disjoint edges.
\end{notation}
 In \cite{IM1, thaismourastructureaware}, it is proven that for hypergraphs $H$ with disjoint edges, $t_{e}(H) = t(1, |E(H)|)$. Thus, we state the following proposition specialized to graph $\mathcal{E}_n$.

\begin{prop} \cite{thaismourastructureaware}
\label{matchinggraph-optimal-edge-CFF}
  For even $n \geq 4, t_e(\mathcal{E}_n) = t(1, \frac{n}{2})$.
\end{prop}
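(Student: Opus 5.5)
We must show $t_e(\mathcal{E}_n) = t(1, \frac{n}{2})$ for even $n \geq 4$. Write $m = n/2$ and label the edges of $\mathcal{E}_n$ as $\{a_k, b_k\}$ for $k \in [1, m]$. The plan is to prove the upper bound with an explicit matrix and the lower bound by extracting a $1$-disjunct submatrix, mirroring the argument for stars in Theorem \ref{star-edgecff-optimal}.

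\textbf{Upper bound.} Let $A$ be a $t \times m$ $1$-disjunct matrix with $t = t(1, m)$; it exists by Equation \ref{max-sperner-family} since $m \geq 2$. Define a $t \times n$ matrix $M$ by setting $M^{a_k} = M^{b_k} = A^k$, so both endpoints of each edge receive the same column.

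Fix an edge $\{a_k, b_k\}$ and a vertex $v_0 \notin \{a_k, b_k\}$. Then $v_0$ is an endpoint of some edge $l \neq k$, so $M^{v_0} = A^l$. Because $A$ is $1$-disjunct, some row $r$ has $A[r, k] = 0$ and $A[r, l] = 1$. This gives $M[r, a_k] = M[r, b_k] = 0$ and $M[r, v_0] = 1$. Hence $M$ is $\mathcal{E}_n$-disjunct and $t_e(\mathcal{E}_n) \leq t(1, m)$.

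\textbf{Lower bound.} Let $M$ be any $t^* \times n$ $\mathcal{E}_n$-disjunct matrix. Consider the submatrix $M'$ formed by the columns $M^{a_1}, \ldots, M^{a_m}$, one endpoint per edge. For $k \neq l$, cover-freeness for the edge $\{a_k, b_k\}$ with $v_0 = a_l$ gives a row $r$ with $M[r, a_k] = 0$ and $M[r, a_l] = 1$. Swapping the roles of $k$ and $l$ gives a row with the opposite pattern. So $M'$ is $1$-disjunct on $m$ columns, and $t^* \geq t(1, m)$.

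Both directions are direct, so I expect no real obstacle. The only points needing care are that $m \geq 2$, which holds because $n \geq 4$, and that the $1$-disjunct definition is applied to distinct edge indices $k \neq l$.
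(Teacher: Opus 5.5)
Your proof is correct. The paper gives no argument of its own here: it quotes the result of \cite{IM1, thaismourastructureaware} that $t_e(H) = t(1, |E(H)|)$ for any hypergraph $H$ with pairwise disjoint edges, and specializes it to the perfect matching $\mathcal{E}_n$.

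Your argument is the graph case of that result written out directly, in the style of Theorem~\ref{star-edgecff-optimal}.
\begin{itemize}
\item For the upper bound, you give both endpoints of the $k$-th edge the $k$-th column of an optimal $1$-disjunct matrix.
\item For the lower bound, you keep one representative per edge. Cover-freeness of $\{a_k,b_k\}$ against $a_l$, together with the symmetric condition, forces these $m$ columns to be pairwise incomparable.
\end{itemize}
The citation gives more generality, since it allows edges of any size. Your direct version is self-contained.

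It also exposes a convention the statement depends on. Your matrix has $M^{a_k} = M^{b_k}$, so the corresponding set system has repeated blocks. A literal reading of $|\mathcal{B}| = n$ in Definition~\ref{def:3} would exclude this. The paper does allow repeated columns elsewhere, for example in the proof of Proposition~\ref{GbarCFF=GCFF-nec-suf}.

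The statement genuinely needs this convention. For $n=4$, two rows with four \emph{distinct} columns cannot work: whichever vertex receives the all-zero column can never play the role of $v_0$. You should state the convention explicitly in your write-up.
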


 \begin{prop}
\label{matching}
    If $m \leq \lfloor \tfrac{1}{2} \overline{(\overline{m} + 1)} \rfloor$, then $t_e(\mathcal{E}_{2m}) + 1 \leq t(\mathcal{E}_{2m}) \leq t_e(\mathcal{E}_{2m}) + 2$. Otherwise, $t(\mathcal{E}_{2m}) = t_e(\mathcal{E}_{2m}) + 2$.
\end{prop}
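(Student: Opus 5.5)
Throughout, write $t_e(\mathcal{E}_{2m}) = t(1,m)$ by Proposition \ref{matchinggraph-optimal-edge-CFF}; this needs $m \geq 2$, and the case $m=1$ is trivial. The upper bound $t(\mathcal{E}_{2m}) \leq t_e(\mathcal{E}_{2m}) + 2$ in both cases is exactly Theorem \ref{GbarCFF=GCFF} part \ref{t_e(G)+2}, since $\delta(\mathcal{E}_{2m}) = 1$. The whole proof therefore reduces to two lower bounds:
\begin{itemize}
\item[(i)] $t(\mathcal{E}_{2m}) \geq t(1,m) + 1$ always;
\item[(ii)] $t(\mathcal{E}_{2m}) \geq t(1,m) + 2$ when $m > \lfloor \tfrac12 \overline{(\overline{m}+1)} \rfloor$.
\end{itemize}

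For (ii) I would use the trivial lower bound, Corollary \ref{trivial-bound}, which gives $t(\mathcal{E}_{2m}) \geq t(1,2m)$. In the regime $m > \lfloor \tfrac12 \overline{(\overline{m}+1)} \rfloor$, Proposition \ref{more-sperner-doubling} gives $t(1,2m) = t(1,m) + 2$. Combined with the upper bound, this yields the equality $t(\mathcal{E}_{2m}) = t_e(\mathcal{E}_{2m}) + 2$.

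For (i), in the regime $m \leq \lfloor \tfrac12 \overline{(\overline{m}+1)} \rfloor$, the same argument gives $t(\mathcal{E}_{2m}) \geq t(1,2m) = t(1,m) + 1$. So the lower bound is again immediate from Corollary \ref{trivial-bound} and Proposition \ref{more-sperner-doubling}.

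The only point needing care is the case $m=2$, or small $m$ in general, where Proposition \ref{more-sperner-doubling} is used. It requires $n \geq 2$, so it applies for every $m \geq 2$.

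An alternative direct argument for (i) is also available. Suppose a $\overline{\mathcal{E}_{2m}}$-disjunct matrix has $t(1,m)$ rows. Choosing one vertex from each edge gives a $1$-disjunct submatrix, by Theorem \ref{GbarCFF-1CFF}. The Sperner condition on an edge $\{a,b\}$ then forces extra rows beyond that. Since the doubling proposition already settles everything, however, I would use it.

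The main obstacle is only verifying that the threshold conventions of Proposition \ref{more-sperner-doubling} match the statement exactly, with $n$ replaced by $m$. They do, verbatim.
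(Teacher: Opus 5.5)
Your proof is correct and is essentially the paper's own argument: $t_e(\mathcal{E}_{2m}) = t(1,m)$ by Proposition~\ref{matchinggraph-optimal-edge-CFF}, the upper bound by Theorem~\ref{GbarCFF=GCFF} part~\ref{t_e(G)+2}, the lower bound $t(1,2m) \leq t(\mathcal{E}_{2m})$ by Corollary~\ref{trivial-bound}, and Proposition~\ref{more-sperner-doubling} to decide whether $t(1,2m)$ equals $t(1,m)+1$ or $t(1,m)+2$. Your ``direct'' alternative for (i) is too vague to stand on its own, but you do not rely on it, so nothing is lost.
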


\begin{proof}
    By Proposition \ref{matchinggraph-optimal-edge-CFF}, $t_e(\mathcal{E}_{2m}) = t(1, m)$ and by Theorem \ref{GbarCFF=GCFF} item \ref{t_e(G)+2}, $t(\mathcal{E}_{2m}) \leq t_e(\mathcal{E}_{2m})) + 2 = t(1, m) + 2$. By Theorem~\ref{trivial-bound}, it follows that $t(1, 2m) \leq t(\mathcal{E}_{2m}) \leq t(1, m) + 2$. If $m \leq \lfloor \tfrac{1}{2} \overline{(\overline{m} + 1)} \rfloor$, then by Proposition \ref{more-sperner-doubling}, $t(1, 2m) = t(1, m) + 1$ and it yields $t_e(\mathcal{E}_{2m}) + 1 \leq t(\mathcal{E}_{2m}) \leq t_e(\mathcal{E}_{2m}) + 2$. Otherwise, $t(1, 2m) = t(1, m) + 2$ and we obtain $t(\mathcal{E}_{2m}) = t(1, m) + 2$. 
\end{proof}

Notice that Proposition \ref{matching} provides an infinite family of $\mathcal{E}_{n}$ which meets the upper bound of $t(G)$ given in Equation \ref{item3-deltaG} of Theorem \ref{GbarCFF=GCFF}, that is, if $n = 2m$ with $m > \lfloor \tfrac{1}{2} \overline{(\overline{m} + 1)} \rfloor$, then $t(\mathcal{E}_{2m}) = t_e(\mathcal{E}_{2m}) + 2$. Furthermore, the next theorem provides an infinite family of $\mathcal{E}_{n}$ which also meets the upper bound of $t(G)$ given in Equation \ref{item3-deltaG} of Theorem \ref{GbarCFF=GCFF}, as well as, the lower bound of $t(G)$ given in Equation \ref{eq:t(G)>=x+1} of Corollary \ref{graph-with-vertices-central-binomial}.

 \begin{theorem}
\label{matching-sperner}
    Let $x \geq 4$ be an integer such that $\binom{x}{\lfloor \frac{x}{2} \rfloor}$ is even. Then, $$t\!\left(\mathcal{E}_{\binom{x}{\left\lfloor \tfrac{x}{2} \right\rfloor}}\right) = x + 1 = t_e\!\left(\mathcal{E}_{\binom{x}{\left\lfloor \tfrac{x}{2} \right\rfloor}}\right) + 2.$$
\end{theorem}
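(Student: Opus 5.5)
Let $n = \binom{x}{\lfloor x/2\rfloor}$ and $m = n/2$. The plan is to squeeze $t(\mathcal{E}_n)$ between two earlier bounds. The lower bound comes from Corollary \ref{graph-with-vertices-central-binomial}, and the upper bound from Proposition \ref{matchinggraph-optimal-edge-CFF} together with Theorem \ref{GbarCFF=GCFF} part \ref{t_e(G)+2}. The key arithmetic fact needed is that $t(1,m) = x-1$, so that $t_e(\mathcal{E}_n) = x-1$.

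\textbf{Lower bound.} $\mathcal{E}_n$ has no isolated vertices and $n = \binom{x}{\lfloor x/2\rfloor}$ with $x \geq 4 \geq 3$. Corollary \ref{graph-with-vertices-central-binomial} therefore applies directly and gives $t(\mathcal{E}_n) \geq x+1$.

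\textbf{Upper bound.} Since $n \geq 6$ is even, Proposition \ref{matchinggraph-optimal-edge-CFF} gives $t_e(\mathcal{E}_n) = t(1,m)$. Theorem \ref{GbarCFF=GCFF} part \ref{t_e(G)+2} then gives $t(\mathcal{E}_n) \leq t(1,m)+2$. It remains to show $t(1,m) = x-1$, which by Equation \ref{max-sperner-family} amounts to the two inequalities
\[
\binom{x-2}{\lfloor (x-2)/2\rfloor} < \frac{1}{2}\binom{x}{\lfloor x/2\rfloor} \leq \binom{x-1}{\lfloor (x-1)/2\rfloor}.
\]

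\textbf{The right inequality.} This is the standard fact $\binom{x}{\lfloor x/2\rfloor} \leq 2\binom{x-1}{\lfloor (x-1)/2\rfloor}$. It follows from Pascal's rule: the middle binomial of row $x$ is the sum of two entries of row $x-1$, each at most the central one. For even $x$ it is actually an equality, $\binom{2k}{k} = 2\binom{2k-1}{k-1}$.

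\textbf{The left inequality.} This needs $\binom{x}{\lfloor x/2\rfloor} > 2\binom{x-2}{\lfloor (x-2)/2\rfloor}$. I would verify it by expanding the left side with Pascal's rule twice:
\[
\binom{x}{j} = \binom{x-2}{j} + 2\binom{x-2}{j-1} + \binom{x-2}{j-2}, \qquad j = \lfloor x/2\rfloor .
\]
One of $\binom{x-2}{j-1}$ or $\binom{x-2}{j}$ is central in row $x-2$. In either case the sum exceeds twice the central term, because the remaining terms are positive when $x \geq 4$. Alternatively, the ratio $\binom{x}{\lfloor x/2\rfloor}/\binom{x-2}{\lfloor (x-2)/2\rfloor}$ equals roughly $4$ (exactly $\frac{x(x-1)}{j(x-j)}$), which is greater than $2$.

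This gives $t(1,m) = x-1$, hence $t(\mathcal{E}_n) \leq x+1$, so $t(\mathcal{E}_n) = x+1 = t_e(\mathcal{E}_n)+2$. The only step needing care is the binomial inequality pinning $t(1,m)$ to exactly $x-1$. The evenness hypothesis is used only so that $\mathcal{E}_n$ is defined.
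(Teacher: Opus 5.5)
Your proof is correct and is essentially the paper's argument: the lower bound $x+1$ comes from Corollary \ref{graph-with-vertices-central-binomial}, and the upper bound is $t_e(\mathcal{E}_n)+2 = t(1,n/2)+2$ with $t(1,n/2)=x-1$. The only difference is that you verify $t(1,n/2)=x-1$ uniformly via Pascal's rule, whereas the paper splits on the parity of $x$, using Proposition \ref{sperner-technical} for odd $x$ and $\binom{2k}{k}=2\binom{2k-1}{k-1}$ for even $x$; in your expansion, $\binom{x-2}{j-1}$ is always the central term, since $\lfloor (x-2)/2\rfloor = j-1$, so the ``one of'' hedge is unnecessary.
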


\begin{proof}
By Theorem \ref{trivial-bound}, $t\!\left(1, \binom{x}{\left\lfloor \tfrac{x}{2} \right\rfloor}\right) \leq t\!\left(\mathcal{E}_{\binom{x}{\left\lfloor \tfrac{x}{2} \right\rfloor}}\right)$. Thus, $x \leq t\!\left(\mathcal{E}_{\binom{x}{\left\lfloor \tfrac{x}{2} \right\rfloor}}\right)$. 
    By Proposition \ref{matching}, $t\!\left(\mathcal{E}_{\binom{x}{\left\lfloor \tfrac{x}{2} \right\rfloor}}\right)
 \leq t_e\!\left(\mathcal{E}_{\binom{x}{\left\lfloor \tfrac{x}{2} \right\rfloor}}\right)
 + 2$ and by Proposition \ref{matchinggraph-optimal-edge-CFF}, $t_e\!\left(\mathcal{E}_{\binom{x}{\left\lfloor \tfrac{x}{2} \right\rfloor}}\right)
 = t\!\left(1, \tfrac{1}{2} \binom{x}{\left\lfloor \tfrac{x}{2} \right\rfloor}\right)$. To compute the value of $t\!\left(1, \tfrac{1}{2} \binom{x}{\left\lfloor \tfrac{x}{2} \right\rfloor}\right)$, we consider the following two cases.
 \begin{enumerate}
      \item[\textbf{Case 1.}] $x = 2k + 1$ for some $k \geq 2$. Then, $t\!\left(1, \tfrac{1}{2} \binom{x}{\left\lfloor \tfrac{x}{2} \right\rfloor}\right) = t\!\left(1, \tfrac{1}{2} \binom {2k + 1}{k}\right)$ and by Proposition \ref{sperner-technical}, we have $t\!\left(1, \tfrac{1}{2} \binom {2k + 1}{k}\right) = 2k$, which is equal to $x - 1$.

\item[\textbf{Case 2.}] $x = 2k$ for some $k \geq 2$. Then, $t\!\left(1, \tfrac{1}{2} \binom{x}{\left\lfloor \tfrac{x}{2} \right\rfloor}\right) = t\!\left(1, \tfrac{1}{2} \binom {2k}{k}\right) = t\!\left(1, \tfrac{2k}{2k} \binom {2k-1}{k-1}\right) = t\!\left(1, \binom {2k-1}{k-1}\right) = 2k - 1 = x - 1$. 
 \end{enumerate}
Thus, we obtain $x \leq t\!\left(\mathcal{E}_{\binom{x}{\left\lfloor \tfrac{x}{2} \right\rfloor}}\right) \leq x + 1$. Since 
$\left| V\!\left(\mathcal{E}_{\binom{x}{\left\lfloor \tfrac{x}{2} \right\rfloor}}\right) \right|
= \binom{x}{\left\lfloor \tfrac{x}{2} \right\rfloor}$, by Corollary \ref{graph-with-vertices-central-binomial}, $t\!\left(\mathcal{E}_{\binom{x}{\left\lfloor \tfrac{x}{2} \right\rfloor}}\right) \geq x + 1$. Thus, $t\!\left(\mathcal{E}_{\binom{x}{\left\lfloor \tfrac{x}{2} \right\rfloor}}\right) = x + 1$.

\end{proof}

\begin{prop}
\label{example-E8}
$t(\mathcal{E}_8) = 5 = t_{e}(\mathcal{E}_8) + 1$. 
\end{prop}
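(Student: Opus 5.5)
The plan is to obtain the lower bound directly from results already proved, and the upper bound from an explicit $5 \times 8$ matrix built from $2$-subsets of $[1,5]$.

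\emph{The value of $t_e(\mathcal{E}_8)$.} By Proposition \ref{matchinggraph-optimal-edge-CFF}, $t_e(\mathcal{E}_8) = t(1,4)$. Since $\binom{3}{1} = 3 < 4 \le \binom{4}{2} = 6$, Equation \ref{max-sperner-family} gives $t(1,4) = 4$. So it suffices to show $t(\mathcal{E}_8) = 5$.

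\emph{Lower bound.} Corollary \ref{trivial-bound} gives $t(\mathcal{E}_8) \ge t(1,8)$. Since $\binom{4}{2} = 6 < 8 \le \binom{5}{2} = 10$, we get $t(1,8) = 5$, hence $t(\mathcal{E}_8) \ge 5$.

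\emph{Upper bound.} I will exhibit an $\mathcal{E}_8$-CFF$(5,8)$ whose blocks are eight of the ten $2$-subsets of $[1,5]$.

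The Sperner condition is automatic: distinct sets of the same size never contain one another. So only the cover-free condition for each of the four edges needs checking.

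To choose the pairing, note the following.
\begin{itemize}
\item If an edge joins two disjoint $2$-sets, their union is a $4$-set. It contains four further $2$-subsets, and all of them would have to be omitted from the family.
\item If an edge joins two $2$-sets sharing an element, say $\{a,b\}$ and $\{a,c\}$, their union $\{a,b,c\}$ contains exactly one further $2$-set, namely $\{b,c\}$.
\end{itemize}
So I pair intersecting sets, and choose the two omitted $2$-sets so that they serve as these ``third sides'' for all four edges.

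I omit $\{1,2\}$ and $\{4,5\}$, and take the edges
\[
\{\{1,3\},\{2,3\}\},\quad \{\{1,4\},\{2,4\}\},\quad \{\{1,5\},\{2,5\}\},\quad \{\{3,4\},\{3,5\}\}.
\]
These four edges use exactly the eight remaining $2$-subsets, each once. Their unions are $\{1,2,3\}$, $\{1,2,4\}$, $\{1,2,5\}$ and $\{3,4,5\}$.

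The only $2$-subsets of these unions other than the edge's own endpoints are $\{1,2\}$ for the first three edges and $\{4,5\}$ for the last. Both are absent from the family. Hence no other block lies in the union of the endpoints of any edge, so the family is $\mathcal{E}_8$-ECFF and therefore an $\mathcal{E}_8$-CFF$(5,8)$.

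This gives $t(\mathcal{E}_8) \le 5$. Combined with the lower bound, $t(\mathcal{E}_8) = 5 = t_e(\mathcal{E}_8) + 1$.

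The only real work is finding this pairing; the bounds themselves follow immediately from earlier results. Finding the pairing reduces to the observation above that a single omitted $2$-set can serve several triangle-shaped edges at once.
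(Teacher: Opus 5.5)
Your proof is correct and follows essentially the same route as the paper. The lower bound comes from $t(1,8)=5$, which the paper obtains via Proposition~\ref{matching}, and the upper bound from an explicit family of eight $2$-subsets of $[1,5]$ omitting two disjoint pairs; your pairing coincides with the paper's up to a relabeling of $[1,5]$, and your ``third side'' argument supplies the verification the paper leaves as easy to check.
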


\begin{proof}
By Proposition \ref{matching}, we have $5 \leq t(\mathcal{E}_8) \leq 6$. Let $V(\mathcal{E}_8) = \{v_i : i \in [1 ,8]\}$ and $$E(\mathcal{E}_8) = \left\{ \{v_1, v_2\}, \{v_3, v_4\}, \{v_5, v_6\}, \{v_7, v_8\} \right\}.$$
With the ground set $[1, 5]$, let $\mathcal{F} = \{B_{v_i} : i \in [1, 8]\}$ be a set system where 
\[
\begin{array}{llll}
B_{v_1} = \{1,2\} & B_{v_3} = \{1,4\} & B_{v_5} = \{2,4\} & B_{v_7} = \{3,4\} \\
B_{v_2} = \{1,3\} & B_{v_4} = \{1,5\} & B_{v_6} = \{2,5\} & B_{v_8} = \{3,5\}
\end{array}
\]
It is easy to verify that $\mathcal{F}$ is an $\mathcal{E}_8$-CFF implying $t(\mathcal{E}_8) = 5$. Also, by Proposition \ref{matchinggraph-optimal-edge-CFF}, $t_e(\mathcal{E}_8) = t(1, 4) = 4$.
\end{proof}

Propositions \ref{matching} and \ref{example-E8} lead us to the following open problems, which are worth investigating. 
\begin{op}
\label{op-E2m-existence}
    Is there an infinite family of $\mathcal{E}_{2m}$, where $t(\mathcal{E}_{2m}) = t_{e}(\mathcal{E}_{2m}) + 1$?
\end{op}

\begin{op}
\label{op-E2m-categorize}
   Determine the values of $m$ such that $t(\mathcal{E}_{2m}) = t_e(\mathcal{E}_{2m}) + 1$ and $t(\mathcal{E}_{2m}) = t_e(\mathcal{E}_{2m}) + 2$. 
\end{op}

\section{Cover-free families on Paths and Cycles}
\label{cff-on-paths-cycles}

By Corollary \ref{vertexcoloring-examples} and Proposition \ref{subgraph-CFF}, $t(P_n) \leq t(C_n) \leq 2t(1, \lfloor n/2 \rfloor) + (n \mod 2)$. Using a mixed-radix Gray code, we provide a construction that improves this upper bound to $1.894 \log_{2} n + \mathcal{O} (1)$. Section \ref{Gray-intro} provides the necessary introduction to Gray codes and their generalizations. 

\subsection{Mixed-radix Gray code}
\label{Gray-intro}
The {\em Hamming distance} between two $n$-tuples is the number of positions at which the corresponding co-ordinates differ. A {\em binary Gray code} $G_n$ is an ordering of the $2^n$ binary $n$-tuples such that any two consecutive tuples have Hamming distance exactly one. Analogously, a mixed-radix Gray code $G_{n}^{(m_1, m_2, \cdots, m_n)}$ deals with ordering $n$-tuples $(a_1, a_2, \cdots, a_n) \in \mathbb{Z}_{m_1} \times \mathbb{Z}_{m_2} \times \cdots \times \mathbb{Z}_{m_n}$\footnote{We use $\mathbb{Z}_{m_1} \times \cdots \times \mathbb{Z}_{m_n}$ to denote the Cartesian product of the sets $\mathbb{Z}_{m_1}, \ldots, \mathbb{Z}_{m_n}$; not the direct product.} such that any two consecutive tuples have Hamming distance exactly one. Hence, a binary Gray code $G_n$ is $G_{n}^{(2, 2, \ldots, 2)}$. A Gray code is \emph{cyclic} if the Hamming distance between the first and last codewords is $1$. Knuth discusses \textit{mixed-radix reflected Gray codes} and \textit{mixed-radix modular Gray codes} in \cite{knuth2011art}, whose definitions and constructions are discussed next. Throughout the rest of this paper, the adjective ``mixed-radix" will be dropped from the mixed-radix Gray code, and we will simply refer to the Gray code $G_{n}^{(m_1, m_2, \ldots, m_n)}$ and simply specifying when it is binary.

\begin{notation} For a gray code $\mathcal{D} = [D_0, D_1,\ldots, D_{m-1}, D_m]$, the reflection of $\mathcal{C}$ is denoted $\mathcal{D}^R=[D_{m}, D_{m-1},\ldots, D_1, D_0]$.
For an $n$-tuple $D = (a_1, a_2, \ldots, a_n)$, $x \cdot D$ denotes
$(x, a_1, a_2, \ldots, a_n)$ and $x \cdot \mathcal{D}$ denotes $[x \cdot D_0, x \cdot D_1,\ldots, x \cdot D_{m-1}, x \cdot D_m]$. Similarly, $D \cdot x$ denotes
$(a_1, a_2, \ldots, a_n, x)$ and $\mathcal{D} \cdot x$ denotes $[D_0 \cdot x,  D_1 \cdot x, \ldots, D_{m-1} \cdot x, D_m \cdot x]$. 
\end{notation}

The \textit{reflected Gray code} $R_{n}^{(m_1, m_2, \cdots, m_n)}$ is defined recursively in Construction \ref{mixed-radix-gray-reflected-code-recursive}.

\begin{Construction}(Reflected Gray code)
\label{mixed-radix-gray-reflected-code-recursive}
\begin{align*}
    R_{1}^{(m_n)} = \mathcal{R}_1 &:= [(0),(1),(2), \ldots, (m_{n}-1)]. \text{ For } i \geq 2, \\
     R_{i+1}^{(m_{n-i}, m_{n-i+1}, \ldots, m_{n-1}, m_n)} = \mathcal{R}_{i+1} &:= 
    \left\{
    \begin{array}{ll}
        [0 \cdot \mathcal{R}_{i}, 1 \cdot \mathcal{R}_{i}^{R}, 2 \cdot \mathcal{R}_{i}, \ldots, m_{n-i}-1 \cdot \mathcal{R}_{i}^{R}], & \text{if } m_{n-i} \text{ is even} \\[8pt]
        [0 \cdot \mathcal{R}_{i}, 1 \cdot \mathcal{R}_{i}^{R}, 2 \cdot \mathcal{R}_{i}, \ldots, m_{n-i}-1 \cdot \mathcal{R}_{i}], & \text{if } m_{n-i} \text{ is odd}
    \end{array} 
    \right.
\end{align*}
\end{Construction}

 The proof of the next Proposition can be found in Appendix \ref{cyclic-even-m1-proof}.

\begin{prop}
\label{cyclic-even-m1}
     $R_{n}^{(m_1, m_2, \ldots, m_n)}$ is cyclic if and only if $m_1$ is even.
\end{prop}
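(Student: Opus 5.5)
We need to prove that $R_{n}^{(m_1,\ldots,m_n)}$ is cyclic if and only if $m_1$ is even. Cyclicity depends only on the first and last codewords, so the plan is to compute both explicitly from Construction \ref{mixed-radix-gray-reflected-code-recursive}. The first codeword is always $(0,0,\ldots,0)$. This follows by induction, since every level begins with $0\cdot\mathcal{R}_i$ and $\mathcal{R}_1$ begins with $(0)$.

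For the last codeword, I will prove by induction on the number of coordinates a slightly stronger claim: the last codeword of $\mathcal{R}$ is $(m_1-1,\ \ell_2,\ldots,\ell_n)$, where the tail is determined as follows.
\begin{itemize}
\item If the outermost radix $m_1$ is odd, the tail is the last codeword of the inner code.
\item If $m_1$ is even, the tail is the first codeword of the inner code, namely all zeros.
\end{itemize}
This holds because the outermost block is $(m_1-1)\cdot\mathcal{R}_{i}^{R}$ when $m_1$ is even and $(m_1-1)\cdot\mathcal{R}_i$ when $m_1$ is odd. The parity of $m_1$ thus dictates whether that last block is reflected. No deeper recursion is needed to settle the top-level behaviour.

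\emph{If $m_1$ is even.} The last codeword is $(m_1-1,0,\ldots,0)$, which differs from $(0,\ldots,0)$ in exactly the first coordinate, since $m_1\ge 2$. The Hamming distance is therefore $1$ and the code is cyclic.

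\emph{If $m_1$ is odd.} If $m_1=1$, the code is $0\cdot\mathcal{R}_{n-1}$ and cyclicity reduces to that of the inner code. I will check this degenerate case separately, and presumably the intended convention is $m_i\ge 2$. With $m_1\ge 3$, the first coordinate of the last codeword is $m_1-1\neq 0$. The last codeword is $m_1-1$ followed by the last codeword of $\mathcal{R}_{n-1}$, so I need that codeword to be nonzero. This is true whenever $n\ge 2$: a Gray code lists all tuples exactly once, and $(0,\ldots,0)$ is the first word of $\mathcal{R}_{n-1}$. Since $\mathcal{R}_{n-1}$ has at least two words, its last word is distinct from its first, hence nonzero. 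The distance from $(0,\ldots,0)$ is thus at least $2$, and the code is not cyclic. For $n=1$ with $m_1$ odd, the code $[(0),\ldots,(m_1-1)]$ has endpoints at distance $1$. This is a boundary case to note: under the paper's phrasing it would be cyclic, so I would state the proposition for $n\ge 2$ or treat $n=1$ as a convention.

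I also need, as an earlier lemma, that the construction actually yields a Gray code containing every tuple exactly once. This is a routine induction: at each junction between blocks $j\cdot\mathcal{R}_i^{(R)}$ and $(j+1)\cdot\mathcal{R}_i^{(R)}$, the inner parts coincide because of the alternating reflection, so only the first coordinate changes.

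The main obstacle is bookkeeping rather than ideas. The recursion is indexed from the right ($m_{n-i}$ prepended), so the statement "the outermost radix is $m_1$" has to be matched carefully to the indices. The degenerate cases $m_1=1$ and $n=1$ also need handling.
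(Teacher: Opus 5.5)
Your proposal is correct and follows the same route as the paper's proof. Both identify the first codeword as the zero tuple and the last codeword as $m_1-1$ followed by either the first (all-zero) or the last codeword of the inner code, depending on whether the final block is reflected, which is governed by the parity of $m_1$. Your version is in fact more careful than the paper's: you justify that the tail is nonzero when $m_1$ is odd, which the paper only asserts (your own stronger claim gives this directly, since the inner last word begins with $m_2-1\neq 0$), and you rightly flag that the ``only if'' direction fails for $n=1$ (e.g.\ $R_1^{(3)}=[(0),(1),(2)]$ has endpoints at Hamming distance $1$) and for $m_1=1$, so the statement implicitly needs $n\ge 2$ and all $m_i\ge 2$.
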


 \textit{Modular Gray codes} $M_{n}^{(m_1, m_2, \ldots, m_n)}$ \cite{knuth2011art} are, however, different from reflected ones in that each successor is obtained from its predecessor by incrementing exactly one coordinate (say at the $i^{\text{th}}$ position) modulo \(m_i\). The term `modular' indicates that the ordering is generated by `cyclically wrapping around'. In general, transitions for reflected Gray code are $0 \rightarrow 1 \rightarrow 2 \rightarrow \cdots \rightarrow m_{i} - 2 \rightarrow m_{i} - 1 \rightarrow m_{i} - 2 \rightarrow \cdots \rightarrow 2 \rightarrow 1 \rightarrow 0$ while transitions for modular Gray code are $0 \rightarrow 1 \rightarrow 2 \rightarrow \cdots m_{i}-2 \rightarrow m_{i}-1 \rightarrow 0 \rightarrow 1 \rightarrow 2 \rightarrow \cdots$. 

Let $M_{n}^{q}$ be an ordered list of tuples of length $n$ where $m_1 = m_2 = \cdots = m_n = q$. For $n \geq 2$, we express a construction of $M_{n}^{q}$ recursively by referring to the individual codewords in $M_{n-1}^{q}$. 

\begin{notation}
    For $D \in \mathbb{Z}_{q}^{n-1}$ and $j \in \mathbb{Z}_{q}$, denote $D \cdot j^+ = [D \cdot (j \mod q), D \cdot ((j+1) \mod q), \ldots, D \cdot ((j+q-1) \mod q)]$.
\end{notation}

\begin{Construction} (Modular Gray code code)
\label{loc-rec-fixedradix} 
Let $q>1$, $n\geq 1$. Define 
$M_{1}^{q} = [(0), (1), \ldots, (q-1)]$, and if $n\geq 2$ and $M_{n-1}^{q} = [D_0, D_1, \ldots, D_{q^{n-1}-1}]$; then
\begin{center}
    $ M_{n}^{q} = [D_{0} \cdot 0^{+}, D_{1} \cdot (q-1)^{+}, D_{2} \cdot (q-2)^{+}, \ldots, D_{i} \cdot (q-i)^{+}, \ldots, D_{q^{n-1}-1} \cdot 1^{+} ]$.
\end{center}
\end{Construction}

The proof of the next Proposition is provided in Appendix \ref{loc-rec-fixed-proof-exp}. 

\begin{prop}
\label{loc-rec-fixed-proof}
    Let $q > 1$ and $n \geq 1$ be integers. The modular Gray code $M_{n}^{q}$ given in Construction \ref{loc-rec-fixedradix} is cyclic.
\end{prop}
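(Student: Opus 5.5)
We need to show that $M_n^q$ from Construction \ref{loc-rec-fixedradix} is a Gray code listing every tuple of $\mathbb{Z}_q^n$ exactly once, with consecutive tuples differing in one coordinate, and with its last and first tuples also differing in one coordinate. The plan is induction on $n$, proving a slightly stronger statement so the wrap-around is controlled: \emph{$M_n^q$ is a Gray code whose first codeword is $(0,\ldots,0)$ and whose last codeword is $(q-1,0,\ldots,0)$.} For $n=1$, $M_1^q=[(0),\ldots,(q-1)]$ lists all of $\mathbb{Z}_q$, consecutive entries differ in the single coordinate, and first and last differ in one coordinate. The case $q=2$ needs no special care, since then $q-1=1$.

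For the inductive step, assume the claim for $M_{n-1}^q=[D_0,\ldots,D_{N-1}]$ with $N=q^{n-1}$. The proof then has four steps.

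\begin{enumerate}
\item \emph{Each tuple appears exactly once.} Each block $D_i\cdot(q-i)^+$ lists $D_i\cdot c$ for all $c\in\mathbb{Z}_q$ once. The $D_i$ are distinct and exhaust $\mathbb{Z}_q^{n-1}$, so every tuple of $\mathbb{Z}_q^n$ appears exactly once.
\item \emph{Within a block.} Consecutive entries $D_i\cdot c$ and $D_i\cdot (c+1)$ differ only in the last coordinate.
\item \emph{Between blocks.} Block $i$ starts at last coordinate $(q-i)\bmod q$ and ends at $(q-i+q-1)\bmod q=(q-i-1)\bmod q$. Block $i+1$ starts at $(q-(i+1))\bmod q$, the same value. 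So the transition from block $i$ to block $i+1$ keeps the last coordinate fixed and changes only from $D_i$ to $D_{i+1}$, which differ in exactly one coordinate by induction.
\item \emph{Endpoints.} The first codeword is $D_0\cdot 0=(0,\ldots,0)$. The last block is $i=N-1$, which starts at $(q-N+1)\bmod q$ and ends at $(q-N)\bmod q = (-N)\bmod q$. Since $q\mid N$ (as $n\ge 2$), this is $0$. So the last codeword is $D_{N-1}\cdot 0=(q-1,0,\ldots,0)\cdot 0=(q-1,0,\ldots,0)$, using the inductive hypothesis for $D_{N-1}$.
\end{enumerate}

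The first and last codewords thus differ only in the first coordinate, giving cyclicity and also the strengthened hypothesis for $n$.

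The main obstacle is the index arithmetic modulo $q$: checking that the end of block $i$ matches the start of block $i+1$ in the last coordinate, and that the final block ends at $0$, which relies on $q\mid q^{n-1}$. This is exactly why the stronger inductive statement pinning down the first and last codewords is needed, rather than just cyclicity.
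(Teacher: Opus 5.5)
Your proof is correct and follows the same route as the paper's: induction on $n$, checking that every tuple occurs exactly once, that consecutive entries differ in one coordinate both inside a block and across block boundaries (where both sides have last coordinate $(-i-1)\bmod q$), and that the endpoints are $0^n$ and $(q-1)0^{n-1}$. Your strengthened hypothesis proves the endpoint claim that the paper only asserts, though it is not actually needed: your own computation shows the first and last codewords of $M_n^q$ are $D_0\cdot 0$ and $D_{N-1}\cdot 0$, so plain cyclicity of $M_{n-1}^q$ already gives the wrap-around step.
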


\subsection{Constructions of cover-free families on paths and cycles using Gray codes}

Let $n = \prod_{i = 1}^{k} m_i, t = \sum_{i = 1}^{k} m_i$, and $f(i, d) = \sum_{l = 1}^{i-1} m_l + d + 1$. Let $\mathcal{P}_i = \{f(i, 0), f(i, 1), \ldots, f(i, m_{i}-1\}$. Thus, $[1, t] = \mathcal{P}_1 \; \dot\cup \; \mathcal{P}_2 \; \dot\cup \;  \cdots \; \dot\cup \; \mathcal{P}_k$. We define a bijection $F$ between tuples in  $\mathbb{Z}_{m_1} \times \mathbb{Z}_{m_2} \times \cdots \times \mathbb{Z}_{m_k}$ and $k$-subsets of $[1, t]$ that are transversals of $\{\mathcal{P}_1, \mathcal{P}_2, \ldots, \mathcal{P}_k\}$:
$$F(D) = \{f(1, d_1), f(2, d_2), \ldots, f(n, d_n)\}.$$

\begin{Construction}
\label{CFFonPaths-cons}

Let $G_{k}^{(m_1, m_2, \cdots, m_k)} = [D_0, D_1, D_2, \ldots, D_{n-1}]$ where $t = \sum_{i = 1}^{k} m_i, n = \prod_{i = 1}^{k} m_i$ and $\mathcal{B}\left(G_{k}^{(m_1, m_2, \ldots, m_k)}\right) = \{B_0 = F(D_0), B_1 = F(D_1) ,\ldots, B_{n-1} = F(D_{n-1}) \}$. Then, $$\mathcal{F}\left(G_{k}^{(m_1, m_2, \ldots, m_k)}\right) = \left([1, t], \mathcal{B}\left(G_{k}^{(m_1, m_2, \ldots, m_k)}\right)\right).$$

\end{Construction}

Given two subsets $A$ and $B$, $A \triangle B$ denotes the symmetric difference of $A$ and $B$. We prove the following lemma.

\begin{lemma}
\label{hamming-distance-1}
    Let $\mathcal{F}\left(G_{k}^{(m_1, m_2, \ldots, m_k)}\right)$ be the set system obtained from $G_{k}^{(m_1, m_2, \ldots, m_k)}$ as described in Construction \ref{CFFonPaths-cons}. Let $D_1, D_2, $ and $D_3$ be distinct tuples in $\mathbb{Z}_{m_1} \times \mathbb{Z}_{m_2} \times \cdots \times \mathbb{Z}_{m_k}$. If the Hamming distance between $D_1$ and $D_2$ is $1$, then $F(D_3) \nsubseteq F(D_1) \cup F(D_2)$.
\end{lemma}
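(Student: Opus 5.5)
The plan is to argue directly from the transversal structure of the sets $F(D)$. By construction, each $F(D)$ meets every part $\mathcal{P}_i$ in exactly one element, namely $f(i,d_i)$. Since the parts are disjoint, for any collection of tuples the intersection of a union of such sets with $\mathcal{P}_i$ is determined coordinatewise. Concretely, $\bigl(F(D_1)\cup F(D_2)\bigr)\cap \mathcal{P}_i = \{f(i,(D_1)_i), f(i,(D_2)_i)\}$. It follows that $F(D_3)\subseteq F(D_1)\cup F(D_2)$ holds if and only if, for every $i\in[1,k]$, $(D_3)_i\in\{(D_1)_i,(D_2)_i\}$. Here we use that $f(i,\cdot)$ is injective on $\mathbb{Z}_{m_i}$ and that the images of different $i$ lie in different parts.

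Next I use the Hamming distance hypothesis. Let $j$ be the unique coordinate where $D_1$ and $D_2$ differ, so $(D_1)_i=(D_2)_i$ for all $i\neq j$. Suppose, for contradiction, that $F(D_3)\subseteq F(D_1)\cup F(D_2)$. By the coordinatewise criterion, $(D_3)_i=(D_1)_i$ for every $i\neq j$. Moreover, $(D_3)_j$ equals either $(D_1)_j$ or $(D_2)_j$. In the first case $D_3=D_1$, and in the second case $D_3=D_2$. Either way this contradicts the assumption that the three tuples are distinct. Hence $F(D_3)\nsubseteq F(D_1)\cup F(D_2)$.

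There is no real obstacle. The only point needing care is the first step: showing that the union of two transversal sets restricted to $\mathcal{P}_i$ is exactly the two chosen elements, and that $f(i,d)=f(i',d')$ forces $i=i'$ and $d=d'$. Both follow immediately from the definition $f(i,d)=\sum_{l<i}m_l+d+1$ with $0\le d\le m_i-1$, which places $f(i,d)$ in the block $\mathcal{P}_i$ and makes it injective in $d$. I would also note in passing that the definition of $F$ should read $f(k,d_k)$ as its last element, so that it is indexed by the $k$ coordinates.
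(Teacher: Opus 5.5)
Your proof is correct and is essentially the paper's argument. The paper writes $F(D_1)=A\cup\{f_1\}$ and $F(D_2)=A\cup\{f_2\}$ and splits on whether $A\subseteq F(D_3)$, which amounts to your coordinatewise criterion applied off and on the differing coordinate. Your remark that the last element in the definition of $F(D)$ should be $f(k,d_k)$ rather than $f(n,d_n)$ is also right.
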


\begin{proof}

Let $l$ be the different coordinate in $D_1$ and $D_2$ that is, $D_1 = (d_1, d_2, \cdots, d_l, \cdots, d_k)$ and $D_2 = (d_1, d_2, \ldots, d'_l, \ldots, d_k)$ with $d_j \in \mathbb{Z}_{m_j}, j \in [1, k] \setminus \{l\}$ such that $d_j, d'_j \in \mathbb{Z}_{m_j}, d_l \neq d'_l$. Let $A = \{f(1, d_1), \ldots, f(l-1, d_{l-1}), f(l+1, d_{l+1}), \ldots, f(k, d_k)\}, f_{1} = f(l, d_l),$ and $f_{2} = f(l, d'_l)$. Thus, $F(D_1) = A \cup \{f_1\}$ and $F(D_2) = A \cup \{f_2\}$. If $A \subseteq F(D_3)$, then $F(D_3) = A \cup \{f_3\}$ where $f_3 \notin F(D_1) \cup F(D_2)$. If $A \nsubseteq F(D_3)$, then there exists $z \in [1, k] \setminus \{l\}$ such that the $z^{th}$ coordinate of $D_3$, $y_z \neq d_z$. Thus, $f(z, y_z) \in F(D_3)$ and $f(z, y_z) \notin F(D_1) \cup F(D_2)$. In both cases, $F(D_3) \nsubseteq F(D_1) \cup F(D_2)$. 

\end{proof}

\begin{theorem}
\label{mixed-radix-to-CFF-paths}
Let $n = \prod_{i = 1}^{k} m_i$ and $t = \sum_{i = 1}^{k} m_i$. Let $G_k = G_{k}^{(m_1, m_2, \ldots, m_k)}$ be a Gray code. Then, $\mathcal{F}(G_k)$ is a $P_n$-CFF$(t, n)$.  If $G_k$ is cyclic, then $\mathcal{F}(G_k)$ is a $C_n$-CFF$(t, n)$.
\end{theorem}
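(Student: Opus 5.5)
We label the vertices of $P_n$ (resp.\ $C_n$) by $0,1,\ldots,n-1$ in path (resp.\ cycle) order. We assign to vertex $i$ the block $B_i = F(D_i)$, where $D_i$ is the $i$-th codeword of $G_k$. Since $G_k$ lists every tuple of $\mathbb{Z}_{m_1}\times\cdots\times\mathbb{Z}_{m_k}$ exactly once, the $D_i$ are pairwise distinct. Since $F$ is a bijection onto the transversals, the blocks $B_i$ are pairwise distinct $k$-subsets of $[1,t]$. There are $n=\prod m_i$ of them on a ground set of size $t=\sum m_i$, so $\mathcal{F}(G_k)$ has the right parameters. It remains to check, for every edge of $P_n$ (and of $C_n$ in the cyclic case), the two conditions of Definition \ref{def:3}: cover-freeness and the Sperner property.

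First, cover-freeness. The edges of $P_n$ are the pairs $\{i,i+1\}$ with $i\in[0,n-2]$. By the defining property of a Gray code, $D_i$ and $D_{i+1}$ have Hamming distance exactly $1$. For any third vertex $j\notin\{i,i+1\}$, the tuple $D_j$ is distinct from both. Lemma \ref{hamming-distance-1} then gives $B_j=F(D_j)\nsubseteq F(D_i)\cup F(D_{i+1})$, so the family is cover-free for every edge, i.e.\ it is a $P_n$-ECFF.

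Second, the Sperner property. All blocks have the same cardinality $k$. Any two distinct sets of equal size are not contained in one another, so every pair of blocks, and in particular every edge, satisfies $|B_a\setminus B_b|\ge 1$ and $|B_b\setminus B_a|\ge1$. Concretely, if $D_a$ and $D_b$ differ only in coordinate $l$, then $f(l,d_l)\in B_a\setminus B_b$ and $f(l,d'_l)\in B_b\setminus B_a$. Hence the family is $P_n$-Sperner, and therefore a $P_n$-CFF$(t,n)$.

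For the cyclic case, the only additional edge of $C_n$ is $\{n-1,0\}$. By the definition of a cyclic Gray code, $D_{n-1}$ and $D_0$ have Hamming distance $1$. The same two arguments (Lemma \ref{hamming-distance-1} and equal block sizes) therefore apply to this edge, which yields a $C_n$-CFF$(t,n)$.

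There is no real obstacle here, since the substantive work is already in Lemma \ref{hamming-distance-1}. The only points needing care are the distinctness of the tuples, which the lemma requires, and the degenerate case $n\le 2$. The latter is excluded because the graphs are assumed to have $n\ge 3$ vertices.
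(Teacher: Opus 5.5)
Your proof is correct and follows the paper's argument. Distinct $k$-uniform blocks give the Sperner property, Lemma~\ref{hamming-distance-1} applied to consecutive codewords gives cover-freeness on every path edge, and cyclicity supplies the extra edge $\{D_{n-1},D_0\}$ for $C_n$. (Your closing caveat about $n\le 2$ is unnecessary, since for $n=2$ the cover-free condition is vacuous and the Sperner condition still holds.)
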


\begin{proof}

Since all sets in $\mathcal{B}(G_k)$ are distinct and $k$-uniform, $\mathcal{F}(G_k)$ is a Sperner family. Since any two consecutive codewords $D_i$ and $D_{i+1}$ with $i \in [0, n-2]$ have Hamming Distance $1$, by Lemma \ref{hamming-distance-1},  $F(D_i) \cup F(D_{i+1})$ does not contain any other subset of $\mathcal{F}(G_k)$. Thus, $\mathcal{F}(G_k)$ is a $P_p$-CFF$(t, n)$. 

If $G_{k}$ is cyclic, then the first and last codeword $D_0$ and $D_{n-1}$ have Hamming distance $1$. Thus, by Lemma \ref{hamming-distance-1}, $F(D_0) \cup F(D_{n-1})$ does not contain any other subset of $\mathcal{F}(G_{k})$. Thus, $\mathcal{F}(G_{k})$ is a $C_n$-CFF$(t, n)$.
\end{proof}

\begin{example}
We present an example of the binary reflected Gray code of length $3$ and construct the corresponding set system as described in Construction \ref{CFFonPaths-cons}. By Theorem \ref{mixed-radix-to-CFF-paths} and Proposition \ref{cyclic-even-m1}, $\mathcal{F}$ is a $C_8$-CFF$(6, 8)$. See Figure \ref{fig:BRGC3-C8CFF}. 

\begin{figure}[htbp]
  \centering

   {\large $[1, 6] = \{1, 2\} \; \dot\cup \; \{3, 4\} \; \dot\cup \; \{5, 6\}.$}

  \begin{minipage}[t]{0.45\textwidth}
    \centering
    \vspace{-5cm}
    

    \begin{tikzpicture}
        \node[draw, minimum width=0.5cm, minimum height=3cm, align=left] (box1) at (0, 0) {
            \begin{minipage}{5cm}
                \begin{enumerate}
                    \item $000 \rightarrow \{1,3,5\}$ 
                    \item $001 \rightarrow \{1,3,6\}$
                    \item $011 \rightarrow \{1,4,6\}$
                    \item $010 \rightarrow \{1,4,5\}$
                    \item $110 \rightarrow \{2,4,5\}$
                    \item $111 \rightarrow \{2,4,6\}$ 
                    \item $101 \rightarrow \{2,3,6\}$ 
                    \item $100 \rightarrow \{2,3,5\}$
                \end{enumerate}
            \end{minipage}
            \hfill
        };
    \end{tikzpicture}
  \end{minipage}
  \hfill
  \begin{minipage}[t]{0.45\textwidth}
    \centering
    \includegraphics[width=\linewidth]{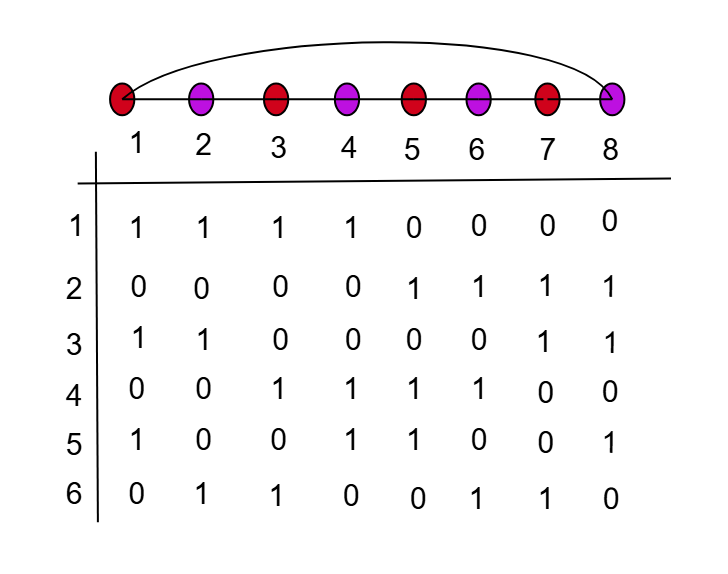}
  \end{minipage}

  \caption{$C_8$-CFF$(6, 8)$ constructed from a binary reflected Gray code of length $3$ (left) and the corresponding $ 6 \times 8 \;\; \overline{C_8}$-disjunct matrix (right)}
    \label{fig:BRGC3-C8CFF}
\end{figure}
\end{example}

The ability to construct $C_n$-CFFs out of Gray codes ultimately leads us to the problem of how a set of size $t$ should be partitioned into distinct parts such that the product $n$ of the sizes of the parts is maximum. This is a well-studied problem, which has been addressed in competitions \cite{scholes1976, scholes1979}. Its solution appears as sequence $A000792$ in the On-Line Encyclopedia of Integer Sequences \cite{sloane}, given in the next theorem.

\begin{theorem} {\cite{sloane}}
\label{maximum-product-partitions}
Let $a(m)$ be the function that gives the maximum product of the parts of $m \in \mathbb{Z^{+}}$, that is, $$a(m) = \max\left\{\prod_{i = 1}^{l} a_i : l \in \mathbb{Z^{+}}, a_1 + a_2 + \cdots + a_l = m,\; a_i \in \mathbb{Z}\right\}.$$
 Then,
    \[
a(m) =
\begin{cases}
    3^k & \text{if } m = 3k, \\
    4 \cdot 3^{k-1} & \text{if } m = 3k + 1, \\
    2 \cdot 3^k & \text{if } m = 3k + 2.
\end{cases}
\]
\end{theorem}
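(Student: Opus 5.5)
The plan is to prove the formula for $a(m)$ in two directions: a structural argument showing that an optimal partition uses only parts equal to $2$ and $3$, in a forced combination, and then a direct evaluation of that partition's product. The maximum exists because for fixed $m$ there are finitely many partitions into positive parts. Parts equal to $0$ give product $0$, and negative parts do not help, so it suffices to maximize over partitions into positive integers. I will therefore take an optimal partition $a_1+\cdots+a_l=m$ with all $a_i\ge 1$ and apply local exchange moves that never decrease the product.

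For the exchange moves, fix an optimal partition.
\begin{enumerate}
\item If some $a_i\ge 5$, replace it by $2$ and $a_i-2$. This strictly increases the product, since $2(a_i-2)=2a_i-4>a_i$.
\item A part equal to $4$ can be replaced by $2+2$ with the same product. So we may assume there are no parts equal to $4$.
\item If some $a_i=1$ and $l\ge 2$, merge it into another part $a_j$. This gives $a_j+1>a_j\cdot 1$, a strict increase.
\item If there are at least three $2$'s, replace them by $3+3$. This strictly increases the product, since $9>8$.
\end{enumerate}

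After these moves, for $m\ge 2$ an optimal partition consists of $3$'s and at most two $2$'s. The residue of $m$ modulo $3$ then fixes the number of $2$'s:
\begin{itemize}
\item $m\equiv 0$: zero $2$'s, so $a(m)=3^k$;
\item $m\equiv 2$: one $2$, so $a(m)=2\cdot 3^k$;
\item $m\equiv 1$: two $2$'s, so $a(m)=4\cdot 3^{k-1}$, with $m=3k+1$.
\end{itemize}
Conversely, each of these partitions is a valid partition of $m$ attaining the stated value, so the maximum is exactly the value listed.

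The main thing to check carefully is the small cases. For $m=1$ we need $k=0$ in the $3k+1$ case, which gives $4\cdot 3^{-1}$; this is not the correct value $a(1)=1$. I will note that the formula is intended for $m\ge 2$, or equivalently for $k\ge 1$ in that case. This matches how the formula is used later, for $t=m\ge 2$. I will also note that move 3 requires $l\ge 2$, which holds whenever $m\ge 2$ and a part equals $1$.
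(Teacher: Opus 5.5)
Your exchange argument (split any part $\ge 5$ as $2+(a_i-2)$, replace $4$ by $2+2$, absorb $1$'s into another part, trade $2+2+2$ for $3+3$) is the standard proof, and it is correct for partitions into positive parts. The paper gives no proof of this statement. It only cites the OEIS entry (A000792) and the competition sources, so your argument supplies what the paper omits. Your caveat about $m=1$ is also right: the $3k+1$ case needs $k\ge 1$, and that is all the paper uses.

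The step that fails is the reduction ``negative parts do not help.'' They do help. For example, $2=(-1)+(-1)+4$ has product $4>2=a(2)$. More generally, $m=(-N)+(-N)+(m+2N)$ has product $N^2(m+2N)$, which is unbounded in $N$. So with $a_i\in\mathbb{Z}$, as the statement is literally written, the maximum does not exist and the formula is false for every $m$. No WLOG argument can repair this. You have to state explicitly that the theorem is meant for $a_i\ge 1$, i.e.\ ordinary integer partitions, just as you restricted to $k\ge 1$ for $m=1$. This is also all the paper needs, since there the $a_i$ are the positive sizes $m_i$ of the blocks partitioning the ground set $[1,t]$. Once the hypothesis is corrected, the argument goes through unchanged:
\begin{itemize}
\item the maximum exists because there are finitely many partitions;
\item parts equal to $0$ are harmless, since the trivial partition $(m)$ already has positive product.
\end{itemize}

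A small presentational point: because you start from an optimal partition, moves 1, 3 and 4 (which strictly increase the product) serve only as contradictions showing those configurations cannot occur. Move 2 is the only one actually performed. Phrasing it that way removes any question about whether the sequence of moves terminates.
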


\begin{theorem}
\label{maximal-path-cycle-thm}
        \[
t(P_n) \leq t(C_n) \leq
\begin{cases}
    3k & \text{if } n \in (2 \cdot 3^{k-1}, 3^k] \text{ for some } k \geq 1, \\
    3k + 1 & \text{if } n \in (3^k, 4 \cdot 3^{k-1}] \text{ for some } k \geq 1, \\
    3k + 2 & \text{if } n \in (4 \cdot 3^{k-1}, 2 \cdot 3^k] \text{ for some } k \geq 1.
\end{cases}
\]
\end{theorem}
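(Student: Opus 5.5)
The plan is to combine Theorem \ref{mixed-radix-to-CFF-paths} with Theorem \ref{maximum-product-partitions}, and then pass from the special value $n=a(m)$ down to every smaller $n$ by taking subgraphs. The inequality $t(P_n)\le t(C_n)$ is immediate from Proposition \ref{subgraph-CFF}, since $P_n\subseteq C_n$. So the real task is to show $t(C_n)\le m$ for each $n$ in the stated ranges, where $m=3k$, $3k+1$ or $3k+2$ is chosen so that $n\le a(m)$.

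First, for each $m$ I would produce a cyclic Gray code whose radices have product exactly $a(m)$ and sum exactly $m$. Following Theorem \ref{maximum-product-partitions}, take radices $(3,\ldots,3)$ ($k$ threes) when $m=3k$, $(4,3,\ldots,3)$ or $(2,2,3,\ldots,3)$ when $m=3k+1$, and $(2,3,\ldots,3)$ when $m=3k+2$.

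Cyclicity is easiest to obtain as follows. By Proposition \ref{cyclic-even-m1}, a reflected Gray code is cyclic as soon as its first radix is even, so this handles the cases $m=3k+1$ and $m=3k+2$ once the even radix is placed first. For $m=3k$, all radices equal $3$, and I would use the modular Gray code $M_k^3$ from Construction \ref{loc-rec-fixedradix}, which is cyclic by Proposition \ref{loc-rec-fixed-proof}.

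Theorem \ref{mixed-radix-to-CFF-paths} then yields a $C_{a(m)}$-CFF$(m,a(m))$. The case $k=1$ needs a separate check: with $m=3$ the code is $M_1^3$, giving $C_3$; with $m=4$ the radix $(4)$ gives $C_4$; with $m=5$ the radices $(2,3)$ give $C_6$. All of these are fine.

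Second, I would descend from $C_{a(m)}$ to $C_n$ for $n\le a(m)$. This is the main obstacle, because $C_n$ is not a subgraph of $C_{a(m)}$, so Proposition \ref{subgraph-CFF} does not apply directly. My first attempt is to shortcut the cycle. Keep the sets $B_0,\ldots,B_{n-2}$ and $B_{N-1}$, where $N=a(m)$. The consecutive pairs among these are still Gray-code neighbours, and $B_{N-1}$ is adjacent to $B_0$ by cyclicity. The only new edge is $\{B_{n-2},B_{N-1}\}$, whose codewords need not be at Hamming distance $1$.

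To repair this, I would choose the retained codewords so that they themselves form a cyclic Gray-like sequence. One way is to take a cyclic Gray code on a sub-box $\prod \mathbb{Z}_{m_i'}$ with $m_i'\le m_i$; this is still realized inside the ground set $[1,m]$, since unused points are harmless and the Sperner property persists by uniformity. Another way is to use the fact that the Cartesian product of cycles and complete graphs is Hamiltonian with cycles of many lengths, so that a closed walk of length $n$ in the Hamming graph $K_{m_1}\square\cdots\square K_{m_k}$ with distinct vertices exists. Such a walk is again a sequence with consecutive Hamming distance $1$, and Lemma \ref{hamming-distance-1} applies to it.

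Concretely, I would argue that this Hamming graph is pancyclic for lengths $3\le n\le N$, proving it by induction on $k$. Two cycles can be combined by splicing along a $4$-cycle formed by two adjacent layers, which allows the cycle length to increase by one, two or three at a time. Once a cycle of length $n$ exists, applying Lemma \ref{hamming-distance-1} to its consecutive pairs and using uniformity for the Sperner property shows that the family is a $C_n$-CFF$(m,n)$. This completes the proof.
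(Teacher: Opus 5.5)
Your first step is fine, and so is your reduction. Any $n$-cycle of codewords in the Hamming graph $K_{m_1}\square\cdots\square K_{m_k}$ yields a $C_n$-CFF$(m,n)$ by Lemma \ref{hamming-distance-1} and uniformity (equivalently, Proposition \ref{Gray-maximal-hamming} plus Proposition \ref{subgraph-CFF}).

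\textbf{The gap.} You never actually produce an $n$-cycle for every $n\in(a(m-1),a(m)]$, and once $n<a(m)$ that is the whole content of the theorem.
\begin{itemize}
\item The sub-box idea only realizes lengths of the form $\prod m_i'$. For $m=6$, the sub-boxes of $\mathbb{Z}_3^2$ give cycle lengths $3,4,6,9$ and miss the required $7$ and $8$.
\item The pancyclicity claim is only asserted. Splicing a cycle $C_1$ in one layer with a cycle $C_2$ in an adjacent layer along a $4$-cycle gives length $|C_1|+|C_2|$, not an increment of one, two or three. The small increments require inserting unused vertices, either through a triangle of a $K_3$-fibre or by a detour into parallel layers. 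You do not show that such vertices are available at every stage up to $N$.
\item As stated, the claim is false for one of your own choices. $K_2\square K_2=C_4$ (your $(2,2)$ option when $k=1$) has no $3$-cycle, so any correct induction has to use the $K_3$ factors.
\end{itemize}

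\textbf{The fix, which is what the paper does.} Use the codes you already have: $M_k^3$, and the reflected codes $R_{k+1}^{(2,3,\ldots,3)}$ or $R_{k+1}^{(2,2,3,\ldots,3)}$ whose last radix is $3$.
\begin{itemize}
\item In these codes the last coordinate varies fastest. So each code is a concatenation of $N/3$ consecutive triples $D\cdot i,\,D\cdot(i+1),\,D\cdot(i+2)$, with indices mod $3$ or in reverse order.
\item The two outer codewords of a triple differ only in the last coordinate. Deleting the middle one therefore keeps every consecutive Hamming distance equal to $1$, and it does not touch the wrap-around pair.
\item Deleting the middles of $a$ triples gives a cyclic sequence of length $N-a$ for every $0\le a\le N/3$.
\end{itemize}
In each range one has $N-n\le N/3$:
\begin{itemize}
\item $3^k-n<3^{k-1}$;
\item $4\cdot 3^{k-1}-n<3^{k-1}<4\cdot 3^{k-2}$ for $k\ge 2$, while for $k=1$ only $n=4=N$ occurs;
\item $2\cdot 3^k-n<2\cdot 3^{k-1}$.
\end{itemize}
Lemma \ref{hamming-distance-1} then finishes the argument exactly as you intended.
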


\begin{proof}

We split the proof into three different cases depending on the parity of $n$ modulo $3$.

\begin{enumerate}
    \item[\textbf{Case 1}]
    $n \in (2 \cdot 3^{k-1}, 3^k] \text{ for some } k \geq 1$. \\
For $k = 1$, use the identity matrix. For $k \geq 2$, use Construction \ref{CFFonPaths-cons} applied to Gray code $M_{k}^{3}$ to obtain $\mathcal{F}(M_{k}^{3})$. By Proposition \ref{loc-rec-fixed-proof}, $M_{k}^{3}$ is cyclic. By Theorem \ref{mixed-radix-to-CFF-paths}, $\mathcal{F}(M_{k}^{3})$ is a $C_{3^k}$-CFF$(3k, 3^k)$. So, it remains to address the case $2 \cdot 3^{k-1} < n < 3^k$. By Construction~\ref{loc-rec-fixedradix}, we observe that each triplet of codewords of $M_{k}^{3}$ appears in the form $D \cdot i$, $D \cdot (i + 1)$, $D \cdot (i + 2)$ with $i \in \mathbb{Z}_3$, where $D$ is a codeword of $M_{k-1}^{3}$ and there are $3^{k-1}$ such triplets. We need to delete $a = 3^k - n$ codewords $D_j$ with $j \equiv 1 \pmod{3}$, where $1 \leq a \leq 3^{k-1} - 1$ to obtain $(M_{k}^{3})^a$ where consecutive codewords have Hamming distance $1$. By similar argument as in the proof of Lemma \ref{hamming-distance-1} and Theorem \ref{mixed-radix-to-CFF-paths}, $\mathcal{F}((M_{k}^{3})^a)$ is a $C_n$-CFF$(3k, n)$.

\item[\textbf{Case 2}] $n \in (3^k, 4 \cdot 3^{k-1}] \text{ for some } k \geq 1$. \\
Use Construction \ref{mixed-radix-gray-reflected-code-recursive} to build a cyclic Gray code $R_{k + 1} = R_{k+1}^{(2, 2, 3, 3, \ldots, 3)}$ and by Proposition \ref{cyclic-even-m1}, $R_{k + 1}$ is cyclic. By Theorem \ref{mixed-radix-to-CFF-paths}, $\mathcal{F}(R_{k + 1})$ is a $C_{4 \cdot 3^{k-1}}$-CFF$(3k+ 1, 4 \cdot 3^{k-1})$, which solves the case $n = 4 \cdot 3^{k-1}$. For $3^k < n < 4 \cdot 3^{k-1}$, we employ a similar technique as in Case 1, and remove $a = 4 \cdot 3^{k-1} - n$ codewords of type $D \cdot 1$ \footnote{Each triplet of codewords of $R_{k+1}^{(2, 2, 3, 3, \ldots, 3)}$ appears in the form $D \cdot 0$, $D \cdot 1$, $D \cdot 2$ or $D \cdot 2$, $D \cdot 1$, $D \cdot 0$, where $D$ is a codeword of $R_{k}^{(2, 2, 3, 3, \ldots, 3)}$}. This is always possible since $1 \leq a \leq 3^{k-1}-1$ and there are more than $3^{k-1}$ such codewords. Thus, we obtain a $C_n$-CFF$(3k + 1, n)$. 

\item[\textbf{Case 3}] $n \in (4 \cdot 3^{k-1}, 2 \cdot 3^k] \text{ for some } k \geq 1$. \\
Use Construction \ref{mixed-radix-gray-reflected-code-recursive} to build a cyclic Gray code $R_{k + 1} = R_{k+1}^{(2, 3, 3, \ldots, 3)}$ and by Proposition \ref{cyclic-even-m1}, $R_{k + 1}$ is cyclic. By Theorem \ref{mixed-radix-to-CFF-paths}, $\mathcal{F}(R_{k + 1})$ is a $C_{2 \cdot 3^{k}}$-CFF$(3k+ 2, 2 \cdot 3^{k})$, which solves the case $n = 2 \cdot 3^{k}$. For $4 \cdot 3^{k-1} < n < 2 \cdot 3^k$, we employ the similar technique as in Case 1, and remove $a = 2 \cdot 3^{k} - n$ codewords of type $D \cdot 1$. This is always possible since $ 1 \leq a \leq 2 \cdot 3^{k-1}-1$ and there are $2 \cdot 3^{k-1}-1$ such codewords. Thus, we obtain a $C_n$-CFF$(3k + 2, n)$.    
\end{enumerate}
\end{proof}

The following corollary gives asymptotic bounds for $t(P_n)$ and $t(C_n)$.

 \begin{corollary}
 \label{asymptotics-t(Cn)}
   For \( n \geq 4 \), we have $t(C_n) \leq 3 \log_{3} n + 2 \leq 1.893 \log_2 n + \mathcal{O} (1) $.
\end{corollary}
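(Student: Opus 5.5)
The plan is to derive the corollary directly from Theorem \ref{maximal-path-cycle-thm}. In each of its three cases I will bound the number of rows by $3\log_3 n + 2$, and then convert to base $2$.

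\textbf{Case 1:} $n \in (2\cdot 3^{k-1}, 3^k]$ with $k\geq 1$. Theorem \ref{maximal-path-cycle-thm} gives $t(C_n) \leq 3k$. Since $n > 2\cdot 3^{k-1}$, we have $\log_3 n > k-1+\log_3 2$, so $3k < 3\log_3 n + 3 - 3\log_3 2$. The constant $3 - 3\log_3 2 \approx 1.107$ is less than $2$.

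\textbf{Case 2:} $n \in (3^k, 4\cdot 3^{k-1}]$. Here $t(C_n)\leq 3k+1$. Since $n>3^k$, we get $3k < 3\log_3 n$, and hence $3k+1 < 3\log_3 n + 1$.

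\textbf{Case 3:} $n \in (4\cdot 3^{k-1}, 2\cdot 3^k]$. Here $t(C_n)\leq 3k+2$. Since $n > 4\cdot 3^{k-1} > 3^k$, we have $3k < 3\log_3 n$, and hence $3k+2 < 3\log_3 n + 2$.

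These three families of intervals, taken over all $k\geq 1$, partition $(2,\infty)$. Concretely, $(2,3]$, $(3,4]$, $(4,6]$, $(6,9]$, and so on, cover all integers $n\geq 3$, and in particular all $n\geq 4$. So in every case $t(C_n) \leq 3\log_3 n + 2$. The assumption $n\geq 4$ ensures we are in the range where the theorem applies; for $n\geq 4$ we always have $k\geq 1$ in one of the cases.

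It remains to write $3\log_3 n = \frac{3}{\log_2 3}\log_2 n$. Since $\log_2 3 \approx 1.58496$, we get $\frac{3}{\log_2 3} \approx 1.8928 < 1.893$, so $3\log_3 n + 2 \leq 1.893\log_2 n + \O(1)$. The first inequality in the statement, $t(P_n)\leq t(C_n)$, is already part of Theorem \ref{maximal-path-cycle-thm}, and we only need the bound for $C_n$.

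There is no real obstacle here. The only points needing care are checking that the intervals in the three cases exhaust all $n\geq 4$, and checking the slack in Case 1. The numeric constant $3/\log_2 3 < 1.893$ should be stated explicitly.
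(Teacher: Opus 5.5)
Your proposal is correct and follows the paper's proof essentially verbatim: the same three cases of Theorem~\ref{maximal-path-cycle-thm}, the same logarithmic estimates in Cases 1 and 2, and the same conversion $3\log_3 n = \frac{3}{\log_2 3}\log_2 n$ with $\frac{3}{\log_2 3}\approx 1.8928$. The differences are cosmetic. In Case 3 you use the weaker bound $n>3^k$, whereas the paper uses $n>4\cdot 3^{k-1}$ to get the constant $5-3\log_3 4$; both give the required $+2$. Your explicit check that the intervals partition $(2,\infty)$ is a useful addition, and your mention of $t(P_n)\le t(C_n)$ is a harmless slip, since the corollary concerns only $C_n$.
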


\begin{proof}
    By Theorem~\ref{maximal-path-cycle-thm}, $t(C_n) \leq 3k$ when $2 \cdot 3^{k-1} < n \leq 3^k$. Taking logarithms base $3$ and multiplying through by $3$ from the inequality $2 \cdot 3^{k-1} < n $, we get $3k < 3\log_{3} n + 3 - 3\log_{3} 2$. Theorem~\ref{maximal-path-cycle-thm} implies $t(C_n) \leq 3k + 1$ for the case $n \in (3^k, 4 \cdot 3^{k-1}]$ for some $k \geq 1$. Going through similar calculations as above, from $3^k < n$ we obtain $3k + 1 <  3\log_{3} n + 1.$ Similarly, in the case $t(C_n) \leq 3k + 2$, when $n \in (4 \cdot 3^{k-1}, 2 \cdot 3^{k}]$ for some $k \geq 1$, we obtain $3k + 2 < 3\log_{3} n + 5 - 3\log_{3} 4.$ Hence, from the above three cases, we have $t(C_n) \leq 3 \log_{3} n + 2.$ Since $3\log_{3} n = \frac{3}{\log_{2}3} \log_{2} n \leq 1.893 \log_{2}(n)$, we get the desired result.
\end{proof}

\begin{example}
\label{P36-P54}
An example of a $C_{36}$-CFF and a $C_{54}$-CFF built on the ground sets $[1, 10]$ and $[1, 11]$, respectively, is given in Figure \ref{fig:P36-P54}. They are also $P_{36}$-CFF and $P_{54}$-CFF, respectively. We also provide an example of a $P_{27}$-CFF (using $R_{3}^{(3, 3, 3)})$ and a $C_{27}$-CFF (using $M_{3}^{3})$ built on $[1, 9]$ in Figure \ref{fig:927}. Figure \ref{fig:P27-P19} provides an example of a $C_{19}$-CFF$(9, 19)$ by shortening $M_{3}^{3}$.

\begin{figure}[h!]
    \centering
    \begin{subfigure}[t]{0.45\textwidth}
        \centering
        \vspace{1.25cm}
        {\tiny $[1, 10] = \{1, 2\} \; \dot\cup \; \{3, 4\} \; \dot\cup \; \{5, 6, 7\} \; \dot\cup \; \{8, 9, 10\}.$}
         \includegraphics[
    trim=0cm 0cm 0cm 1.5cm,
    clip,
    width=0.8\textwidth
]{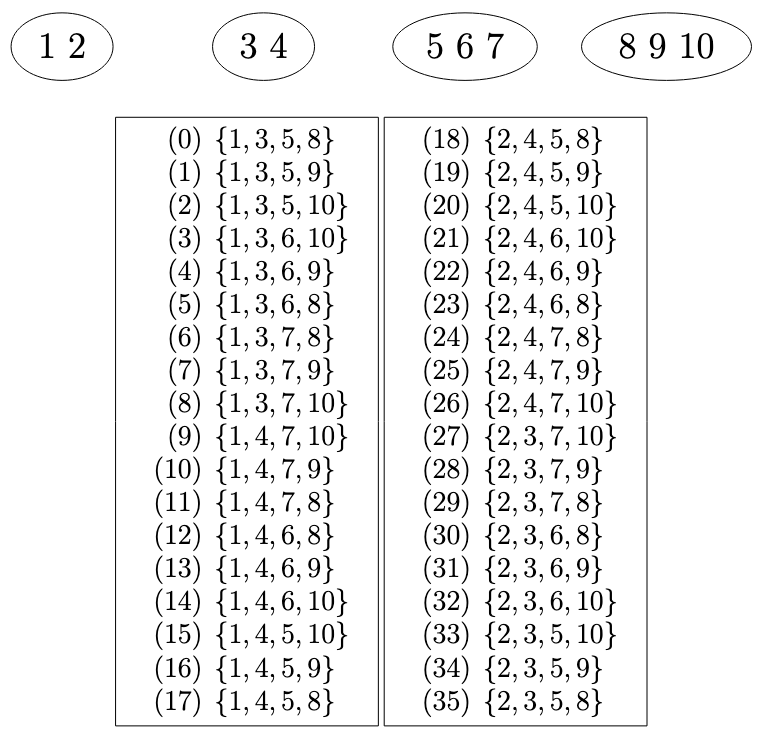} 
        \caption{A $C_{36}$-CFF$(10, 36)$ built on $[1,10]$ using $R_{4}^{(2, 2, 3, 3)}$}
        \label{fig:P_36-CFF(10,36)}
    \end{subfigure}
    \begin{subfigure}[t]{0.54\textwidth}
        \centering
         {\tiny $[1, 11] = \{1, 2\} \; \dot\cup \; \{3, 4, 5\} \; \dot\cup \; \{6, 7, 8\} \; \dot\cup \; \{9, 10, 11\}.$}
         \includegraphics[
    trim=0cm 0cm 0cm 1.5cm,
    clip,
    width=0.8\textwidth
]{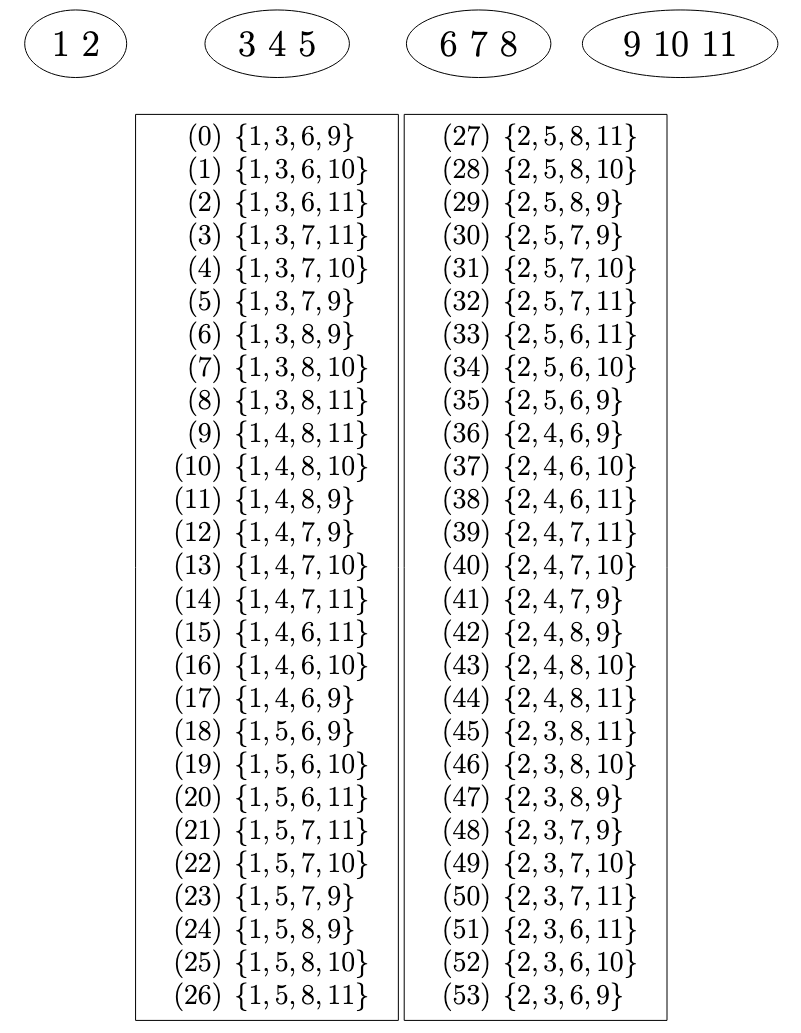} 
        \caption{A $C_{54}$-CFF$(11, 54)$ built on $[1,11]$ using $R_{4}^{(2, 3, 3, 3)}$}
        \label{fig:C_54-CFF(11,54)}
    \end{subfigure}
    \caption{Examples of a $C_{36}$-CFF and a $C_{54}$-CFF}
    \label{fig:P36-P54}
\end{figure}

\begin{figure}[htbp]
    \centering
    

     {\footnotesize $[1, 9] = \{1, 2, 3\} \; \dot\cup \; \{4, 5, 6\} \; \dot\cup \; \{7, 8, 9\}.$}
    
  \begin{subfigure}[t]{0.35\textwidth}
        \centering
        \includegraphics[width=\textwidth]{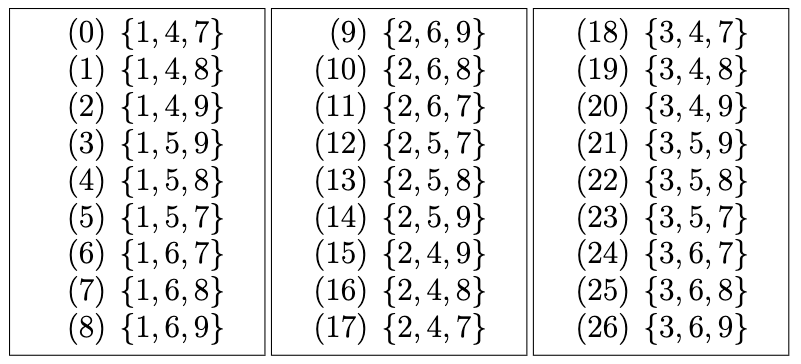} 
        \caption{A $P_{27}$-CFF$(9, 27)$ built on $[1,9]$ using $R_{3}^{(3, 3, 3)}$}
        \label{fig:P27}
    \end{subfigure}
    \hfill
    \begin{subfigure}[t]{0.35\textwidth}
        \centering
        \includegraphics[width=\textwidth]{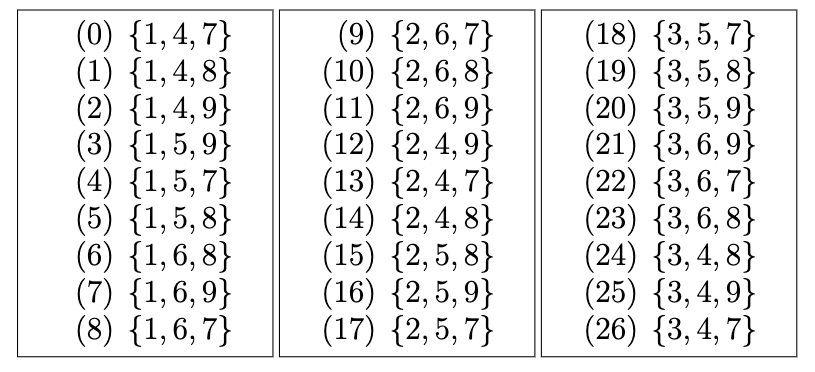} 
        \caption{A $C_{27}$-CFF$(9, 27)$ and a $P_{27}$-CFF$(9, 27)$ built on $[1,9]$ using $M_{3}^{3}$}
        \label{fig:C27}
    \end{subfigure}
\caption{Examples of a $P_{27}$-CFF and a $C_{27}$-CFF}   
\label{fig:927}
\end{figure} 
    \end{example}

\begin{figure}[h!]
    \centering
    {\small $[1, 9] = \{1, 2, 3\} \; \dot\cup \; \{4, 5, 6\} \; \dot\cup \; \{7, 8, 9\}.$}
    
    \vspace{0.2cm}
    \begin{subfigure}[t]{0.54\textwidth}
        \centering
        \includegraphics[
    trim=0cm 0cm 0cm 1.3cm,
    clip,
    width=0.8\textwidth
]{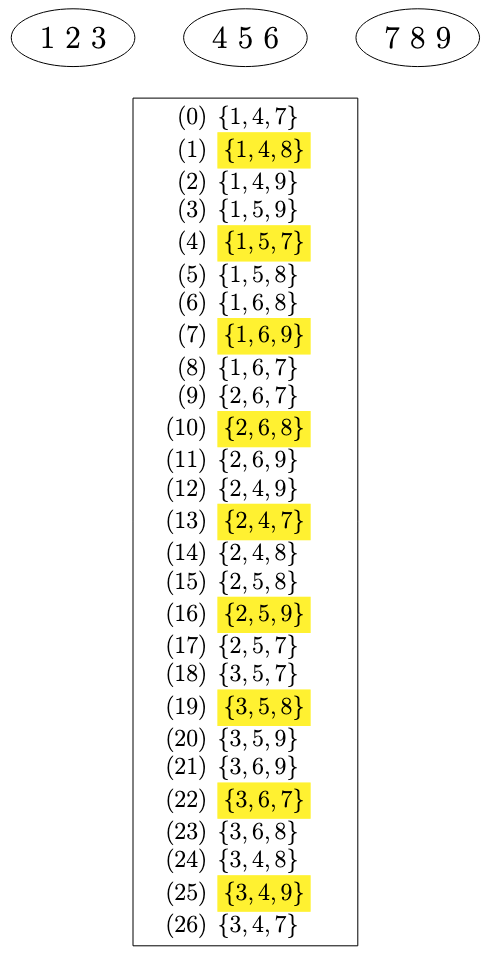}
        \caption{A $C_{27}$-CFF$(9, 27)$ built on $[1,9]$ using $M_{3}^{3}$}
        \label{fig:C_27-CFF(9,27)-highlighted}
    \end{subfigure}
    \begin{subfigure}[t]{0.45\textwidth}
        \centering
        \raisebox{2cm}{  \includegraphics[
    trim=0cm 0cm 0cm 1.3cm,
    clip,
    width=1\textwidth
]{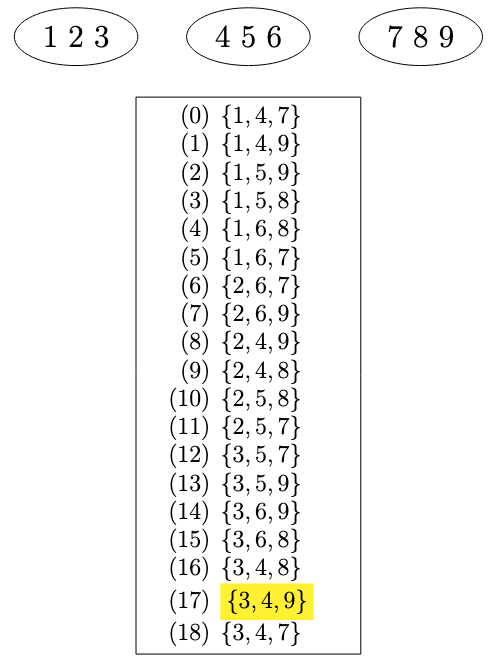}
    } 
        \caption{A $C_{19}$-CFF$(9, 19)$ built on $[1,9]$ using $(M_{3}^{3})^8$. $(M_{3}^{3})^8$ is constructed by deleting the first $8$ highlighted codewords in \ref{fig:C_27-CFF(9,27)-highlighted}.}
        \label{fig:C_19-CFF(9,19)}
    \end{subfigure}
    \caption{A $C_{27}$-CFF$(9, 27)$ and a $C_{19}$-CFF$(9, 19)$}
    \label{fig:P27-P19}
\end{figure}

We further show that the array produced in Construction \ref{CFFonPaths-cons} is also a CFF on a generalized Hamming graph defined below.

\begin{definition}
A \textit{Hamming graph} $H(d, n)$ is obtained by the Cartesian product of $d$ complete graphs $K_n$. A \textit{hypercube graph} $Q_d$ is a Hamming graph $H(d, 2)$. The generalized Hamming graph $H_{a_1, a_2, \ldots, a_d}$ is the graph $K_{a_1} \square K_{a_2} \square \cdots \square K_{a_d}$.
\end{definition}

Let $H = K_{a_1} \square K_{a_2} \square \cdots \square K_{a_d}$ with $V(K_{a_l}) = \{0, 1, \ldots, a_{l} - 1\}$ for $l \in [1, d]$. Then, $V(H)$ is a set of all $d$-tuples in $V(K_{a_1}) \times V(K_{a_2}) \times \cdots \times V(K_{a_d})$ and, for a pair of vertices \sloppy$(x_1, x_2, \ldots, x_d), (y_1, y_2, \ldots, y_d) \in V(H)$, $\{(x_1, x_2, \ldots, x_d), (y_1, y_2, \ldots, y_d)\} \in E(H) $ if and only if for some $i \in [1, d], x_i \neq y_i$ with $\{x_i, y_i\} \in E(K_{a_i})$ and for each $j \in [1, d] \setminus \{i\}$, $x_j = y_j$. In other words, $K_{a_1} \square K_{a_2} \square \cdots \square K_{a_d}$ has the vertex set consisting of codewords of $G_{d}^{(a_1, a_2, \ldots, a_d)}$ and $\{(x_1, x_2, \ldots, x_d), (y_1, y_2, \ldots, y_d)\} \in E(K_{a_1} \square K_{a_2} \square \cdots \square K_{a_d}) $ if and only if the Hamming distance between $(x_1, x_2, \ldots, x_d)$ and $(y_1, y_2, \ldots, y_d) $ is $1$. 

 \begin{prop}
\label{Gray-maximal-hamming}
Let $G_{k}^{(m_1, m_2, \ldots, m_k)}$ be a Gray code and let $t, n$, and $\mathcal{F}\left(G_{k}^{(m_1, m_2, \ldots, m_k)}\right)$ be defined as in Construction \ref{CFFonPaths-cons}. The maximal graph $\mathcal{H}$ such that $\mathcal{F}\left(G_{k}^{(m_1, m_2, \ldots, m_k)}\right)$ is an $\mathcal{H}$-CFF$(t, n)$ is given by $\mathcal{H} = H_{m_1, m_2, \ldots, m_k}$. In addition, $t(C_n) \leq t\left(H_{m_1, m_2, \ldots, m_k}\right) \leq t$.
\end{prop}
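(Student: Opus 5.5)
The proof has three parts: the Hamming graph works, no larger graph works, and the chain of inequalities for $t(C_n)$.

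First, $\mathcal{F}$ is an $H_{m_1,\ldots,m_k}$-CFF$(t,n)$. Identify each vertex of $H_{m_1,\ldots,m_k}$ with the codeword $D$ and its block $F(D)$. All blocks are distinct $k$-subsets of $[1,t]$. Hence the family is Sperner, and in particular Sperner for every edge. For cover-freeness, take an edge $\{D_1,D_2\}$ of $H_{m_1,\ldots,m_k}$; by definition $D_1$ and $D_2$ have Hamming distance $1$. Lemma \ref{hamming-distance-1} then gives $F(D_3)\nsubseteq F(D_1)\cup F(D_2)$ for every other codeword $D_3$. Note that this lemma uses only the distance between the two codewords, not their positions in the Gray code ordering.

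Second, maximality. Let $D_1\neq D_2$ be at Hamming distance at least $2$, say they differ in coordinates $l\neq l'$. I will show $\mathcal{F}$ is not cover-free for $\{D_1,D_2\}$. Define $D_3$ to agree with $D_1$ except that its $l'$-th coordinate is taken from $D_2$. Then $D_3\neq D_1$ because they differ at $l'$, and $D_3\neq D_2$ because they differ at $l$. Every element $f(j,(D_3)_j)$ of $F(D_3)$ lies in $F(D_1)$ or $F(D_2)$, so $F(D_3)\subseteq F(D_1)\cup F(D_2)$. Thus any graph $\mathcal{H}$ for which $\mathcal{F}$ is an $\mathcal{H}$-CFF has edges only between codewords at distance $1$, so $\mathcal{H}\subseteq H_{m_1,\ldots,m_k}$. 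Together with the first part, $H_{m_1,\ldots,m_k}$ is the unique maximal such graph.

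Third, the inequalities. Since $\mathcal{F}$ has ground set of size $t$, $t(H_{m_1,\ldots,m_k})\le t$. For $t(C_n)\le t(H_{m_1,\ldots,m_k})$, I will show $C_n$ is a subgraph of $H_{m_1,\ldots,m_k}$ and apply Proposition \ref{subgraph-CFF}. If some $m_i$ is even, order the coordinates so that $m_1$ is even; the reflected Gray code is then cyclic by Proposition \ref{cyclic-even-m1} and gives a Hamiltonian cycle. In general, $K_{m}$ is Hamiltonian for $m\ge 3$, so the cited fact that Cartesian products of Hamiltonian graphs are Hamiltonian gives a Hamiltonian cycle in $H_{m_1,\ldots,m_k}$. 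The only delicate point is factors $K_2$, which are not Hamiltonian. I expect this to be the main obstacle, and would handle it as follows. Pair two $K_2$ factors into $C_4=K_2\square K_2$. Alternatively, use $K_2\square C$ for an even cycle $C$, or, when the other side has odd order, use the cyclic modular/reflected code with the even radix placed first, appealing to Proposition \ref{cyclic-even-m1} again. I would also exclude degenerate cases (e.g.\ $n\le 2$, or all $m_i=1$) from the statement, since there $C_n$ is not defined.
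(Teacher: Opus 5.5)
Your proposal is correct and follows the paper's proof: equal block sizes plus Lemma \ref{hamming-distance-1} give the $H_{m_1,\ldots,m_k}$-CFF property, the codeword that mixes coordinates of $D_1$ and $D_2$ rules out every edge at Hamming distance at least $2$, and Hamiltonicity of $H_{m_1,\ldots,m_k}$ together with Proposition \ref{subgraph-CFF} gives $t(C_n)\le t(H_{m_1,\ldots,m_k})$. Your justification of Hamiltonicity is more careful than the paper's bare assertion, and your first case (an even $m_i$ moved to the front, using Proposition \ref{cyclic-even-m1}) already covers every $K_2$ factor; so once the all-odd case is handled by the product fact, the further alternatives you list are unnecessary.
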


\begin{proof} 
    Let $H_{m_1, m_2, \ldots, m_k}$  be a graph with $V(H_{m_1, m_2, \ldots, m_k}) =$$\left\{ D_i \mid D_i \text{ is a codeword in } G_k^{(m_1, m_2, \ldots, m_k)}\right\}$. Since $\{D_i, D_j\} \in E(H_{m_1, m_2, \ldots, m_k})$ if and only if the Hamming Distance between $D_i$ and $D_j$ is exactly $1$, by Lemma \ref{hamming-distance-1}, for any $D_l \neq D_i, D_j$, we have $F(D_l) \nsubseteq F(D_i) \cup F(D_j)$. Also, $F(D_i) \nsubseteq F(D_j)$ and $F(D_j) \nsubseteq F(D_i)$ because $|F(D_i)| = |F(D_j)|$. Thus, $\mathcal{F}\left(G_{k}^{(m_1, m_2, \ldots, m_k)}\right)$ is an $H_{m_1, m_2, \ldots, m_k}$-CFF$(t, n)$. Since $H_{m_1, m_2, \ldots, m_k}$ is Hamiltonian, by Proposition \ref{subgraph-CFF}, $t(C_n) \leq t(H_{m_1, m_2, \ldots, m_k})$. 

    Let $\mathcal{H} \supsetneq H_{m_1, m_2, \ldots, m_k}$ be a graph such that $\mathcal{F}\left(G_{k}^{(m_1, m_2, \ldots, m_k)}\right)$ is an $\mathcal{H}$-CFF. Then, there exists an edge $\{D_i, D_j\} \in E(\mathcal{H})$ such that the Hamming Distance between $D_i$ and $D_j$ is strictly greater than $1$. Without loss of generality, let us assume that the $1^{st}$ and $2^{nd}$ co-ordinates are changed. Thus, $D_i$ and $D_j$ must be in the forms $D_i = (d_1, d_2, d_3, \ldots, d_k)$ and $D_j = (d'_1, d'_2, d'_3, \ldots, d'_k)$ with $d_l, d'_l \in \mathbb{Z}_{m_l}, l \in [1, n]$ and $d_1 \neq d'_{1}, d_2 \neq d'_{2}$. Let $D_y = (d_1, d'_2, d_3, \ldots, d_n)$. Then, $F(D_y) \subseteq F(D_i) \cup F(D_j)$, which is a contradiction to our initial assumption that $\mathcal{F}$ is $\mathcal{H}$-CFF.
\end{proof}

Next, we present Construction \ref{cycle-path-doubling-construction}, which provides the bounds in Proposition \ref{cycle-path-doubling-bound}.

   \begin{Construction}
\label{cycle-path-doubling-construction}
    Let $A$ be a $t \times n$ binary matrix and $I_2$ be the $2 \times 2$ identity matrix. Let $B$ be a matrix with columns of $A$ in the reverse order. Using $A$ and $I_2$, we create a $(t+2) \times 2n$ binary matrix $C$ by where vertices are labeled by $1, 2, \ldots, n, n+1, n+2, \ldots, 2n$. The construction is as follows. We copy the matrices $A$ and $B$ under vertices $1, \ldots, n$ and  $n+1, \ldots, 2n$, respectively and call the resultant matrix $C_1$.  We copy the first column and second column of $I_2$ under vertices labeled $1, \ldots, n$ and $n+1, \ldots, 2n$, respectively and call the resultant matrix $C_2$. Let $C$ be the vertical concatenation of $C_1$ and $C_2$. We label the rows of $C$ as the elements of $[1, t_1 + 2]$ with rows labeled $[1, t_1]$ corresponding to the rows of $C_1$ and rows labeled $[t_1 + 1, t_1 + 2]$ corresponding to the rows of $C_2$.
\end{Construction}

\begin{prop}  
\label{cycle-path-doubling-bound}
    $t(C_{2n}) \leq t(C_n) + 2$ and $t(P_{2n}) \leq t(P_n) + 2$.  
\end{prop}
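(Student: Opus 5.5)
The plan is to show that Construction \ref{cycle-path-doubling-construction}, applied to an optimal $\overline{C_n}$-disjunct (respectively $\overline{P_n}$-disjunct) matrix $A$ with $t = t(C_n)$ (respectively $t(P_n)$) rows, yields a $\overline{C_{2n}}$-disjunct (respectively $\overline{P_{2n}}$-disjunct) matrix $C$ with $t+2$ rows. Label $C_{2n}$ by the cyclic order $1,2,\ldots,n,n+1,\ldots,2n$. By construction, column $C^{n+i}$ restricted to the first $t$ rows equals $A^{n+1-i}$ for $i\in[1,n]$. The last two rows are $(1,0)$ on the first half and $(0,1)$ on the second half. The key observation is that the second half is the same CFF traversed backwards. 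Hence every edge of $P_n$ in the first half maps to an edge of $P_n$ in the second half, namely $\{n+i,n+i+1\}\leftrightarrow\{n+1-i,n-i\}$. The two halves are joined by the edges $\{n,n+1\}$ and $\{2n,1\}$, and in each of these the two endpoints carry the \emph{same} column of $A$ ($A^n$, respectively $A^1$).

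I will check the edges in three groups, using the fact (Theorem \ref{GbarCFF-1CFF}) that $A$ is $1$-disjunct.

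\textbf{Edges inside one half.} Take an edge $\{i,i+1\}$ with $i<n$ (edges in the second half are symmetric). It is Sperner in $C$ because it is Sperner in $A$. For cover-freeness, consider a third vertex $v$. If $v$ lies in the same half, $A$ supplies the required row. If $v$ lies in the other half, row $t+2$ has a $1$ at $v$ and $0$ at both $i$ and $i+1$.

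\textbf{Crossing edges.} Consider $\{n,n+1\}$; the edge $\{2n,1\}$ is handled identically. Rows $t+1$ and $t+2$ contain $(1,0)$ and $(0,1)$ on these two columns, which gives the Sperner condition. Now take any other vertex $v$. Its column in the first $t$ rows is some $A^u$ with $u\neq n$. Indeed, the only vertices carrying $A^n$ are $n$ and $n+1$. Since $A$ is $1$-disjunct, there is a row $r\le t$ with $A[r,u]=1$ and $A[r,n]=0$. In that row both $n$ and $n+1$ have $0$ and $v$ has $1$, as required.

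For the path $P_{2n}$, the same argument applies with only the single crossing edge $\{n,n+1\}$, starting from a $\overline{P_n}$-disjunct matrix. For the cycle, note that the edge $\{1,n\}$ of $C_n$ is never used. So in fact a $\overline{P_n}$-disjunct matrix already suffices for $C_{2n}$, and in particular a $\overline{C_n}$-disjunct one does.

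The main point needing care is the crossing edges. There the two endpoints have identical $A$-parts, so Sperner-ness comes only from the $I_2$ rows, and cover-freeness must come from $1$-disjunctness of $A$ rather than from any edge condition. This is exactly where Theorem \ref{GbarCFF-1CFF} is used. I also need to verify that no third vertex reuses column $A^n$ or $A^1$, which holds because each column of $A$ appears exactly twice in $C$, once in each half.
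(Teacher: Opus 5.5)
Your proposal is correct and follows the paper's proof essentially step for step. You use the same doubling construction (Construction~\ref{cycle-path-doubling-construction}). In-half edges are handled by $A$ together with the $I_2$-row that is $1$ on the opposite half. The crossing edges, whose endpoints carry identical $A$-columns, are handled by the $1$-disjunctness of $A$ from Theorem~\ref{GbarCFF-1CFF}.

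Your direct check of the Sperner condition improves slightly on the paper. The paper gets Sperner-ness from Proposition~\ref{GbarCFF=GCFF-nec-suf}, which needs $\delta\ge 2$, so it does not directly cover $P_{2n}$. Your observation that the edge $\{1,n\}$ of $C_n$ is never used correctly gives the stronger bound $t(C_{2n})\le t(P_n)+2$. For example, it yields $t(C_{20})\le t(P_{10})+2=8$, which beats the bound $9$ from Theorem~\ref{maximal-path-cycle-thm}.
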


 \begin{proof}
     Let $A$ be a $t_1 \times n$ $\overline{C_n}$-disjunct matrix and $B$ be the matrix with columns of $A$ in the reverse order. We claim that the matrix $C$, obtained using Construction \ref{cycle-path-doubling-construction}, is a $(t_1 + 2) \times 2n$ $\overline{C_{2n}}$-disjunct matrix. Indeed, since $A$ is cover-free for an edge $\{i, i+1\}$ with $i \in [1, n-1]$ and furthermore, $C[t_1 + 2, i] = C[t_1 + 2, i+1] = 0$ and $C[t_1 + 2, j] = 1$ for all $j \in [n+1, 2n]$, this implies that $C$ is cover-free for $\{i, i+1\}$. Using a similar argument, we can conclude that $C$ is cover-free for the edge $\{j , j+1\}$ for all $j \in [n+1, 2n-1]$. Since by Theorem \ref{GbarCFF-1CFF}, $A$ is $\overline{1}$-disjunct and for the edge $\{n, n+1\}$ since $C_1^{n} = C_1^{n+1} = A^{n}$, there exists a row $l_1 \in [1, t_1]$ in $C_1$ such that $C_1[l_1, n] = C_1[l_1, n+1] = 0$ and $C_1[l_1, j] = 1$ for some $j \in [1, 2n] \setminus \{n, n+1\}$. Thus, $C$ is cover-free for the edge $\{n, n+1\}$. Using a similar argument, we can also show that $C$ is cover-free for the edge $\{1, 2n\}$. Thus, $C$ is $C_{2n}$-disjunct. Since $\delta(C_{2n}) \geq 2$, by Proposition \ref{GbarCFF=GCFF-nec-suf}, $C$ is $\overline{C_{2n}}$-disjunct. Thus, $t(C_{2n}) \leq t(C_n) + 2$. A similar construction and proof can be applied to a $t_1 \times n$ $\overline{P_n}$-disjunct matrix $A$ to obtain $t(P_n) \leq t(P_n) + 2$.
\end{proof}

Next, using $t(C_n)$, we provide an upper and lower bound of $t(W_n)$. 

\begin{corollary}
\label{wheel}
For $n \geq 3$, $\max\{t(1, n) + 1, t(C_n), t(C_{n+1})\} \leq t(W_{n+1}) \leq t(C_n) + 1$. 
\end{corollary}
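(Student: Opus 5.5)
Since $W_{n+1} = C_n + \{n+1\}$ is obtained from the cycle $C_n$ by adding a universal vertex, I will apply Theorem \ref{star-cons-gen} with $G = C_n$. The hypotheses hold: $n \geq 3$ and $C_n$ has no isolated vertices. The theorem then gives
\[
\max\{t(1, n) + 1,\ t(C_n)\} \leq t(W_{n+1}) \leq t(C_n) + 1 .
\]
This already yields the upper bound and two of the three terms in the lower bound. I will not repeat the proof of Theorem \ref{star-cons-gen}. Instead I will point out that its construction, which prepends a zero column for the hub and adds a row with a single $1$ under the hub, directly produces the $\overline{W_{n+1}}$-disjunct matrix. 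The Sperner property comes for free from $\delta(W_{n+1}) \geq 3$ via Proposition \ref{GbarCFF=GCFF-nec-suf}.

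The only remaining term is $t(C_{n+1}) \leq t(W_{n+1})$. For this I will use Proposition \ref{subgraph-CFF}, so I need $C_{n+1}$ to be a subgraph of $W_{n+1}$. Label the rim cycle $1, 2, \ldots, n$ and the hub $n+1$. The wheel contains the Hamiltonian cycle $(1, 2, \ldots, n, n+1, 1)$: the edges $\{i, i+1\}$ for $i \in [1, n-1]$ are rim edges, and $\{n, n+1\}$ and $\{n+1, 1\}$ are spokes. Hence $C_{n+1} \subseteq W_{n+1}$ on the same vertex set, and Proposition \ref{subgraph-CFF} gives $t(C_{n+1}) \leq t(W_{n+1})$. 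Equivalently, $C_n \subseteq C_{n+1}$ after dropping the rim edge $\{1,n\}$ is not needed; what matters is only that a $\overline{W_{n+1}}$-disjunct matrix is $\overline{C_{n+1}}$-disjunct because it is cover-free and Sperner on every edge of the wheel.

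Combining the two steps gives the claimed inequality. There is no real obstacle. The only point to check is that $W_{n+1}$ is Hamiltonian for every $n \geq 3$, which the explicit cycle above shows; in particular, for $n = 3$ the graph $W_4 = K_4$ contains $C_4$.
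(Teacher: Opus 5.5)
Your proposal is correct and matches the paper's proof: Theorem~\ref{star-cons-gen} with $G = C_n$ gives the upper bound and the $t(1,n)+1$ and $t(C_n)$ terms, and the Hamiltonian cycle through the hub together with Proposition~\ref{subgraph-CFF} gives $t(C_{n+1}) \leq t(W_{n+1})$. The garbled sentence beginning ``Equivalently, $C_n \subseteq C_{n+1}$\ldots'' is unnecessary and can be deleted.
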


\begin{proof}
Since $W_{n+1}$ is hamiltonian, by Proposition \ref{subgraph-CFF}, $t(C_{n+1}) \leq t(W_{n+1})$ and since $W_{n+1} = C_n + \{0\}$ by Theorem \ref{star-cons-gen}, $t(1, n) + 1 \leq t(W_{n+1}) \leq t(C_{n}) + 1$. Since $C_n \subseteq W_{n+1}$, by Proposition \ref{subgraph-CFF}, we have $t(C_n) \leq t(W_{n+1})$. Thus, we have the desired bound. 
\end{proof}

\section{Generalizing the construction of CFFs on star graphs}
\label{star-gen-cons}

We generalize the construction of CFFs on star graphs to obtain CFFs for other families of graphs, such as windmill graphs, friendship graphs, and wheel graphs.

\begin{definition}
\label{def:windmill}
    Let $k, n \geq 2$. The \emph{windmill graph} $\mathrm{Wd}(k,n)$ is an undirected graph constructed by joining $n$ copies of the complete graph $K_k$ at a shared universal vertex. Thus, $\mathrm{Wd}(k,n)$ has $n(k-1) + 1$ vertices. The \emph{friendship graph} is $\mathrm{Wd}(3, n)$ which has $2n+1$ vertices and is denoted by $F_{2n + 1}$.
\end{definition}

Thus, the star graph $S_{n+1}$ is $\mathrm{Wd}(2,n)$. 

\begin{example}
We give an example of a friendship graph and a windmill graph in Figures \ref{fig:Wd34} and \ref{fig:Wd54}.
    \begin{figure}[ht]
    \centering
    \begin{subfigure}[b]{0.45\textwidth}
        \centering
        \begin{tikzpicture}[scale=1.3]

            \node[draw, circle, fill=black, inner sep=1pt, label=below:$v_0$] (a) at (0,0) {};

            \def\n{4}

            \foreach \i in {0, 1, 2, 3} {
                \foreach \j in {1, 2} {
                    \pgfmathsetmacro{\angle}{90 + \i*360/\n + \j*120/\n}
                    \node[draw, circle, fill=black, inner sep=1pt] (b\i\j) at (\angle:2) {};
                    \draw (a) -- (b\i\j);
                }
                \draw (b\i1) -- (b\i2);
                \draw (b\i2) -- (b\i1);
            }
        \end{tikzpicture}
        \caption{Friendship Graph: $F_{9} = \mathrm{Wd}(3,4)$}
        \label{fig:Wd34}
    \end{subfigure}\hfill
    \begin{subfigure}[b]{0.45\textwidth}
        \centering
        \begin{tikzpicture}[scale=1.3]

            \node[draw, circle, fill=black, inner sep=1pt, label=below:$v_0$] (a) at (0,0) {};

            \foreach \i in {0, 90, 180, 270} {
                \foreach \j in {1, 2, 3, 4} {
                    \node[draw, circle, fill=black, inner sep=1pt] (b\i\j) at (\i+\j*18:2) {};
                    \draw (a) -- (b\i\j);
                }
                \foreach \j in {1, 2, 3, 4} {
                    \foreach \k in {1, 2, 3, 4} {
                        \ifnum\j<\k
                          \draw (b\i\j) -- (b\i\k);
                        \fi
                    }
                }
            }
        \end{tikzpicture}
        \caption{Windmill graph: $\mathrm{Wd}(5,4)$}
        \label{fig:Wd54}
    \end{subfigure}
\caption{Examples of a friendship graph and a windmill graph}
\end{figure}
\end{example}

Below we give a construction for a $\overline{\mathrm{Wd}(k, n)}$-disjunct matrix. 

\begin{Construction}
\label{windmil-cff-cons}
Let $\mathrm{Wd}(k, n)$ be a windmill graph with $k \geq 3$ and $n \geq 2$. Let $v_0$ be the universal vertex and for $1 \leq i \leq n$, let the vertices of the $i^{th}$ subgraph $K_{k-1}$ be $ v_{i, 1}, v_{i, 2}, \ldots, v_{i, k-1}$. Let $A$ be a $t(1, n) \times n$ $1$-disjunct matrix and let $M$ be a $t(2, k-1) \times (k-1)$ $2$-disjunct matrix, if $k \geq 4$ or a $2 \times 2$ identity matrix, if $k = 3$. Let $t_2$ be $t(2, k-1)$, if $k \geq 4$ and $2$, if $k = 3$. 

Let $\mathcal{C}_1, \mathcal{C}_2,$ and $\mathcal{C}_3$  be matrices where each column is labeled with the vertices of the graph $\mathrm{Wd}(k, n)$ as described next. In $\mathcal{C}_1$, $\mathcal{C}_{1}^{v_0} = \mathbf{0}_{t(1, n) \times 1}$ and $\mathcal{C}_{1}^{v_{i, j}} = A^i$ for all $j \in [1, k-1]$. In $\mathcal{C}_2$, $\mathcal{C}_{2}^{v_0} = \mathbf{0}_{t_2 \times 1}$ and for all $i \in [1, n]$, $\mathcal{C}_{2}^{v_{i, j}} = M^j$ with $j \in [1, k-1]$. $\mathcal{C}_3$ is the row matrix such that $\mathcal{C}_3[1, v_0] = 1$ and $\mathcal{C}_3[1, v_{i, j}] = 0$ for all $i \in [1, n]$ and $j \in [1, k-1]$. Let $\mathcal{C}$ be the vertical concatenation of $\mathcal{C}_1, \mathcal{C}_2,$ and $\mathcal{C}_3$, as shown in Figure \ref{Wd}.

    \begin{figure}
        \centering
        \includegraphics[width=0.7\linewidth]{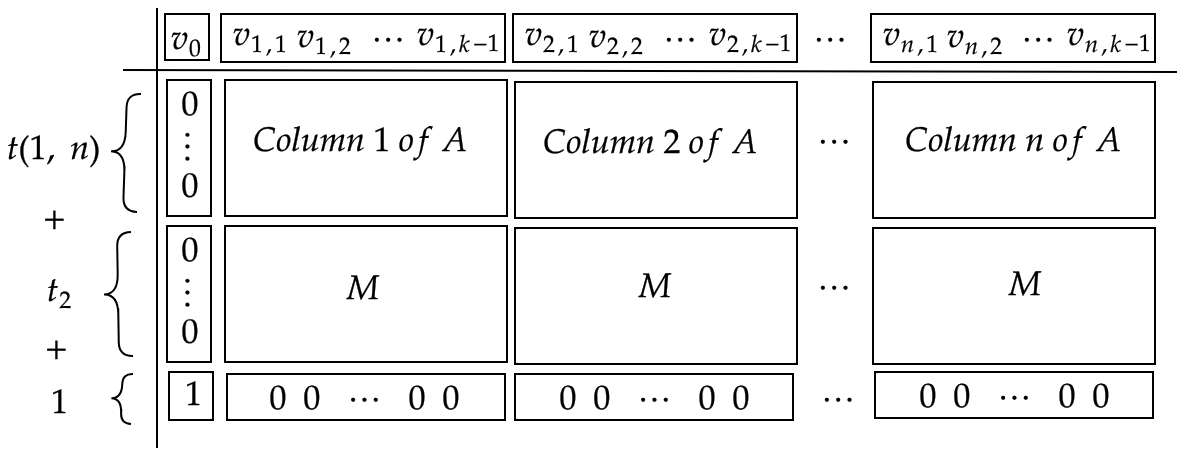}
        \caption{A $\overline{\mathrm{Wd}(k, n)}$-disjunct}
        \label{Wd}
    \end{figure}

\end{Construction}

\begin{theorem}
\label{windmill-cff-bounds}
 Let $n \geq 2$. For $k \geq 4$, 
     $$t(1, (k-1)n) + 1 \leq t(\mathrm{Wd}(k, n)) \leq t(1, n) +  t(2, k-1) + 1.$$
For $k = 3$,
     $$t(1, 2n) + 1 \leq t(\mathrm{Wd}(3, n)) \leq t(1, n) + 3.$$
\end{theorem}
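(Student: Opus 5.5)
For the lower bound I would view $\mathrm{Wd}(k,n)$ as $H+\{v_0\}$, where $H$ is the disjoint union of $n$ copies of $K_{k-1}$. Since $k\ge 3$, $H$ has $(k-1)n$ vertices and no isolated vertices. Theorem~\ref{star-cons-gen} then gives $t(\mathrm{Wd}(k,n)) = t(H+\{v_0\}) \ge t(1,(k-1)n)+1$ directly. For $k=3$ this reads $t(1,2n)+1$, which is exactly the stated lower bound, so both lower bounds come from one citation.

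For the upper bound I would verify that the matrix $\mathcal{C}$ of Construction~\ref{windmil-cff-cons} is $\overline{\mathrm{Wd}(k,n)}$-disjunct. It has $t(1,n)+t_2+1$ rows, which is the claimed bound in both cases ($t_2=t(2,k-1)$ for $k\ge4$, and $t_2=2$ for $k=3$). Every vertex of $\mathrm{Wd}(k,n)$ has degree at least $2$ when $k\ge3$: $v_0$ has degree $n(k-1)$ and each $v_{i,j}$ has degree $k-1$. By Proposition~\ref{GbarCFF=GCFF-nec-suf} it therefore suffices to show that $\mathcal{C}$ is $\mathrm{Wd}(k,n)$-disjunct, and the Sperner condition comes for free. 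I would check the edge types one at a time.

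\emph{Edge $\{v_0,v_{i,j}\}$, covered vertex $v_{i',j'}$.} If $i'\ne i$, the $1$-disjunctness of $A$ gives a row of $\mathcal{C}_1$ with a $0$ in column $A^i$ and a $1$ in column $A^{i'}$; column $v_0$ is zero in $\mathcal{C}_1$. If $i'=i$ and $j'\ne j$, I use $\mathcal{C}_2$ instead. For $k\ge4$, $M$ is $2$-disjunct, hence $1$-disjunct, so some row has $M^j=0$ and $M^{j'}=1$. For $k=3$, $M=I_2$ and this holds trivially.

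\emph{Edge $\{v_{i,j},v_{i,j'}\}$, covered vertex $v_0$.} The row $\mathcal{C}_3$ works.

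\emph{Edge $\{v_{i,j},v_{i,j'}\}$, covered vertex $v_{i',l}$ with $i'\ne i$.} Columns $v_{i,j}$ and $v_{i,j'}$ both restrict to $A^i$ in $\mathcal{C}_1$, so again a row of $A$ separating $A^{i'}$ from $A^i$ suffices.

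\emph{Edge $\{v_{i,j},v_{i,j'}\}$, covered vertex $v_{i,l}$ with $l\ne j,j'$.} This only arises when $k\ge4$, and then the $2$-disjunctness of $M$ gives a row of $\mathcal{C}_2$ with zeros at $j,j'$ and a $1$ at $l$. For $k=3$ the triangle has no third non-central vertex, which is why $I_2$ is enough there.

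The delicate part is simply making sure this case list is complete, in particular the intra-clique case, since that is the only place where $2$-disjunctness of $M$ is needed. Once it is, the upper bound follows. The degenerate situation $k-1=2$ with $t(2,2)$ undefined is exactly why $k=3$ uses $I_2$, with $t_2=2$.
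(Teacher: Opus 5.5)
Your proposal is correct and matches the paper's proof: the upper bound is the paper's verification that the matrix of Construction~\ref{windmil-cff-cons} is $\mathrm{Wd}(k,n)$-disjunct (with the same case split), hence $\overline{\mathrm{Wd}(k,n)}$-disjunct since $\delta(\mathrm{Wd}(k,n)) \geq 2$. For the lower bound the paper uses $S_{(k-1)n+1} \subseteq \mathrm{Wd}(k,n)$ together with Proposition~\ref{subgraph-CFF} and Corollary~\ref{star-optimum}, whereas you cite the lower bound of Theorem~\ref{star-cons-gen}; both rest on the same argument (the rows where the universal vertex is $0$ form a $1$-CFF on the remaining $(k-1)n$ vertices, plus one more row for the Sperner condition), so your citation is equally valid.
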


 \begin{proof}
    We show that the matrix $\mathcal{C}$ created in Construction \ref{windmil-cff-cons} is $\overline{\mathrm{Wd}(k, n)}$-disjunct. Since $\delta(\mathrm{Wd}(k, n)) \geq 2$, by Theorem \ref{GbarCFF=GCFF} part \ref{delta=2}, it is enough to prove that $\mathcal{C}$ is $\mathrm{Wd}(k, n)$-disjunct. Let $\{u, v\}$ be an edge in $\mathrm{Wd}(k, n)$ for $k \geq 3$. We need to show that for any other vertex $w$, there exists a row $r$ with $\mathcal{C}[r, u] = \mathcal{C}[r, v] = 0$ and $\mathcal{C}[r, w] = 1$. 

    \begin{enumerate}
        \item[\textbf{Case 1}] $\{u, v\} = \{v_0, v_{i,j}\}$ for $i \in [1,n]$ and $j \in [1,k-1]$, 

    \begin{itemize}[leftmargin=0pt]
        \item[Case 1.1]  $w = v_{l,m}$, where $l \in [1,n], m \in [1,k-1]$ and $l \neq i$.  
        
        According to Construction \ref{windmil-cff-cons}, $\mathcal{C}_{1}^{v_0} = \mathbf{0}_{t(1, n) \times 1}, \mathcal{C}_{1}^{v_{i,j}} = A^i,$ and $\mathcal{C}_{1}^{v_{l,m}} = A^l$. Since $A$ is $1$-disjunct, there is a row $r$ in $\mathcal{C}_1$ (thus, in $\mathcal{C}$) such that $\mathcal{C}_{1}[r, v_0] = \mathcal{C}_{1}[r ,v_{i,j}] = 0$ and $\mathcal{C}_{1}[r, v_{l,m}] = 1$. 
          

        \item[Case 1.2] $w = v_{i,m}$, where $m \in [1,k-1]$ and $m \neq j$. 
        
        According to Construction \ref{windmil-cff-cons}, $\mathcal{C}_{2}^{v_0} = \mathbf{0}_{t_2 \times 1}, \mathcal{C}_{2}^{v_{i, j}} = M^{j},$ and $\mathcal{C}_2^{v_{i, m}} = M^{m}$. Since $M$ is also $1$-disjunct, there is a row $r$ in $\mathcal{C}_2$ such that $\mathcal{C}_{2}[r, v_0] = \mathcal{C}_{2}[r ,v_{i,j}] = 0$ and $\mathcal{C}_{2}[r, v_{i,m}] = 1$. 
    \end{itemize}

       \item[\textbf{Case 2}] $\{u, v\} = \{v_{i,j}, v_{i,m}\}$ for $i \in [1,n]$ and $j, m \in [1,k-1]$ with $j \neq m$. 
       
       \begin{itemize}[leftmargin=0pt]
           \item[Case 2.1] $w = v_0$. 
           
           It is easy to see that $\mathcal{C}_{3}[1, v_{i,j}] =  \mathcal{C}_3[1, v_{i,m}] = 0$ and $\mathcal{C}_3[1, v_0] = 1$.
           
           \item[Case 2.2] 
           $w = v_{i, p}$ with $p \in [1, k-1]$ and $p \neq j, m$. Note that in this case $k \geq 4$.
           
           Since $\mathcal{C}_{2}^{v_{i,j}} = M^j, \mathcal{C}_{2}^{v_{i,m}} = M^m,$ and $\mathcal{C}_{2}^{v_{i, p}} = M^p$ and $M$ is $2$-disjunct, there exists a row $r$ in $\mathcal{C}_2$ such that $\mathcal{C}_{2}[r, v_{i, j}] = \mathcal{C}_{2}[r ,v_{i,m}] = 0$ and $\mathcal{C}_2[r, v_{i,p}] = 1$. 
           
           \item[Case 2.3] $w = v_{l,p}$ for $l \in [1, n], p \in [1, k-1] $ with $l \neq i$. 

           Since $\mathcal{C}_{1}^{v_{i,j}} = \mathcal{C}_{1}^{v_{i,m}} = A^i,$ and $\mathcal{C}_{1}^{v_{l,p}} = A^l$ and $A$ is $1$-disjunct, there is a row $r$ in $\mathcal{C}_1$ (hence, in $\mathcal{C}$) such that $\mathcal{C}_{1}[r, v_{i, j}] = \mathcal{C}_{1}[r ,v_{i,m}] = 0$ and $\mathcal{C}_{1}[r, v_{l,p}] = 1$. 
       \end{itemize}
    \end{enumerate}
    Thus, we have $t(\mathrm{Wd}(k, n)) \leq t(1, n) +  t(2, k-1) + 1$. Since $S_{(k-1)n+1} \subseteq \mathrm{Wd}(k,n)$, by Proposition \ref{subgraph-CFF} and $t(S_{(k-1)n+1}) = t(1, (k-1)n) + 1$, by Corollary \ref{star-optimum}, we have $t(1, (k-1)n) + 1 \leq t(\mathrm{Wd}(k, n))$. 
 \end{proof}
 
 \begin{corollary}
\label{friendship}
If $n > \lfloor \tfrac{1}{2} \overline{(\overline{n} + 1)} \rfloor$, then $t(F_{2n+1}) = t(1, n) +  3$. Otherwise, $t(1, n) + 2 \leq t(F_{2n+1}) \leq t(1, n) +  3$.
\end{corollary}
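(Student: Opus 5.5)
The plan is to read both claims off Theorem \ref{windmill-cff-bounds} for $k=3$, which gives
\[
t(1,2n)+1 \leq t(F_{2n+1}) \leq t(1,n)+3,
\]
and then to evaluate $t(1,2n)$ with Proposition \ref{more-sperner-doubling}. The upper bound $t(1,n)+3$ already appears in the statement, so only the lower bound needs to be converted into a bound in terms of $t(1,n)$.

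\textbf{Case 1:} $n > \lfloor \tfrac{1}{2}\overline{(\overline{n}+1)}\rfloor$. Proposition \ref{more-sperner-doubling} gives $t(1,2n)=t(1,n)+2$. The lower bound then becomes $t(F_{2n+1}) \geq t(1,n)+3$, which matches the upper bound, so equality holds.

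\textbf{Case 2:} otherwise. The same proposition gives $t(1,2n)=t(1,n)+1$, so the lower bound reads $t(F_{2n+1}) \geq t(1,n)+2$. Together with the upper bound this yields the two-sided inequality claimed.

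The only point needing care is that Proposition \ref{more-sperner-doubling} requires $n\geq 2$, which holds because friendship graphs are defined with $n\geq 2$. The small case $n=2$ is also covered directly by the proposition's own proof, where $t(1,4)=4$ and $t(1,2)=2$. I therefore expect no real obstacle; the argument is a substitution of Proposition \ref{more-sperner-doubling} into Theorem \ref{windmill-cff-bounds}.
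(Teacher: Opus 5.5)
Your proposal is correct and matches the paper's proof exactly: specialize Theorem~\ref{windmill-cff-bounds} to $k=3$ to get $t(1,2n)+1 \leq t(F_{2n+1}) \leq t(1,n)+3$. Then use Proposition~\ref{more-sperner-doubling} to replace $t(1,2n)$ by $t(1,n)+2$ or $t(1,n)+1$, according to the two cases.
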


\begin{proof}
    Since $F_{2n+1}$ is a $\mathrm{Wd}(3, n)$, by Theorem \ref{windmill-cff-bounds}, we obtain $t(1, 2n) + 1 \leq t(F_{2n+1}) \leq t(1, n) +  3$. If $n > \lfloor \tfrac{1}{2} \overline{(\overline{n} + 1)} \rfloor$, then by Proposition \ref{more-sperner-doubling}, $t(1, 2n) = t(1, n) + 2$ and we obtain $t(F_{2n+1}) = t(1, n) +  3$. If $n \leq \lfloor \tfrac{1}{2} \overline{(\overline{n} + 1)} \rfloor$, then by Proposition \ref{more-sperner-doubling}, $t(1, 2n) = t(1, n) + 1$ and we obtain $t(1, n) + 2 \leq t(F_{2n+1}) \leq t(1, n) +  3$.
    
\end{proof}

\section{Values of $t(P_n), t(C_n),$ and $t(W_n)$ for small $n$}
\label{CFFonSmallPathsCycles}

In this section, we determine $t(P_n)$, $t(C_n)$, and $t(W_n)$ for some small values of $n$.

\begin{prop}
\label{small-cycles-234}
If $n = 3, 4$; then $t(P_n) = t(C_n) = n$. 
\end{prop}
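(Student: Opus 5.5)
The plan is to sandwich both quantities between matching bounds. For the upper bound, note that $P_n \subseteq C_n \subseteq K_n$. Proposition \ref{subgraph-CFF} together with Corollary \ref{trivial-bound} then gives $t(P_n) \leq t(C_n) \leq t(K_n) = t(2,n)$. One can also exhibit the $n \times n$ identity matrix directly: the union of any two columns never contains a third column, and distinct weight-one columns are incomparable, so the identity is $\overline{K_n}$-disjunct. Hence $t(P_n) \leq t(C_n) \leq n$ for $n = 3, 4$. Since $C_3 = K_3 = P_3 + \{x\}$, it suffices to get matching lower bounds for the paths, which are the weakest graphs.

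For $n = 3$, Corollary \ref{graph-with-vertices-central-binomial} applies directly, since $3 = \binom{3}{1}$ with $x = 3$. It gives $t(P_3) \geq 4$?? That is too strong, so I must recheck. Since $t(1,3) = 3$, the corollary would assert $t(G) \geq 4$, contradicting the identity upper bound; the corollary's argument in fact fails for $x = 3$ because unions of two singletons contain no third singleton. So instead I would argue directly. Corollary \ref{trivial-bound} gives $t(P_3) \geq t(1,3) = 3$, because $\binom{2}{1} = 2 < 3$. Hence $t(P_3) = t(C_3) = 3$.

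For $n = 4$, Corollary \ref{trivial-bound} gives only $t(P_4) \geq t(1,4) = 4$ (as $\binom{3}{1} = 3 < 4 \leq \binom{4}{2}$), and this already matches the upper bound $4$. Therefore $t(P_4) = t(C_4) = 4$.

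The main obstacle is thus merely bookkeeping: verify the identity matrix is $\overline{C_n}$-disjunct, and compute $t(1,3) = 3$ and $t(1,4) = 4$ from Equation \ref{max-sperner-family}. One should also avoid misapplying Corollary \ref{graph-with-vertices-central-binomial} at $x = 3$. If one prefers a self-contained lower bound, one can instead note that any $P_n$-CFF is a $1$-CFF by Theorem \ref{GbarCFF-1CFF}, and a Sperner family of $n \geq 3$ sets needs a ground set of size at least $t(1,n)$.
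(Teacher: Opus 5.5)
Your argument is correct and is essentially the paper's: both sandwich $t(1,n) \le t(P_n) \le t(C_n) \le t(K_n)$ using Corollary~\ref{trivial-bound} and Proposition~\ref{subgraph-CFF}, and the only difference is that you certify $t(K_n) \le n$ directly with the $n \times n$ identity matrix instead of citing the known value $t(2,n) = n$. Your remark that Corollary~\ref{graph-with-vertices-central-binomial} fails at $x = 3$ is also right, since the three singletons form a $K_3$-CFF$(3,3)$; the aside $C_3 = P_3 + \{x\}$ is a harmless slip that plays no role in the proof ($P_3 + \{x\}$ has four vertices; you mean $P_2 + \{x\}$).
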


\begin{proof}
    For $n = 3, 4$; $t(1, n) = n$, and also, $t(2, n) = n$ by \cite{LVW}. Hence, by Theorem \ref{trivial-bound}, $t(P_n) = n = t(C_n)$.
\end{proof}

To obtain the exact values of $t(P_n)$ and $t(C_n)$ for some larger values of $n$, we provide Lemmas~\ref{base4-max4} and \ref{max-5-6}. We prove these two lemmas by case analysis. In the case where all subsets in a set system have size $2$, we associate a graph with the set system, as explained in the next remark.

\begin{remark}
\label{G^A}
Let $\mathcal{F} = (\mathcal{X}, \mathcal{B})$ be a set system where $|\mathcal{X}| = t$ and for every $B \in \mathcal{B}, |B| = 2$. Let $\mathcal{M}$ be the incidence matrix associated with $\mathcal{F}$. Such $\mathcal{F}$ can be depicted as a graph $G$ where $V(G) = \mathcal{X}$ and $E(G) = \{\{a, b\} : \{a, b\} \in \mathcal{B}\}$. 

For a graph $H$, let $\mathcal{F}$ be $H$-CFF. For $\{u, v\} \in E(H)$, let $B_{u} = \{a, b\}$ and $B_v = \{c, d\}$. For $\{x, y\} \subseteq B_u \cup B_v$ and $\{x, y\} \neq B_u, B_v$, $\{x, y\}$ is said to be \emph{covered by sets $B_u$ and $B_v$}. If $\{x, y\}$ is covered by $B_u$ and $B_v$, then by definition of $H$-CFF, $\{x, y\} \notin \mathcal{B}$. Hence, $\{x, y\} \notin E(G)$. The type of pair $(B_u, B_v)$ can be one of two cases:

\begin{enumerate}
    \item[\textbf{Case 1}]\textbf{(Type 1 pair)} $b = d$ and $a \neq b \neq c$. Then, $\{a, c\} \notin E(G)$. Figure \ref{fig:type1} shows the type $1$ pair $\{a, b\}$ and $\{b, c\}$ as black edges and the covered set $\{a, c\}$ as a non-edge, since $\{a, c\}$ is prohibited in $\mathcal{B}$.

    \item[\textbf{Case 2}]\textbf{(Type 2 pair)} $a, b, c, d$ are all distinct. Then, none of the sets $\{a,c\}, \{a,d\}, \{b,c\}, \{b,d\}$ belong to $E(G)$. Figure \ref{fig:type2} shows the type $2$ pair $\{a, b\}$ and $\{c, d\}$ as black edges, and the covered subsets that are prohibited in $\mathcal{B}$ as the non-edges.
\end{enumerate}
\end{remark}

\begin{figure}[htbp]
    \centering

    \begin{subfigure}{0.45\textwidth}
        \centering
        \includegraphics[width=0.45\linewidth]{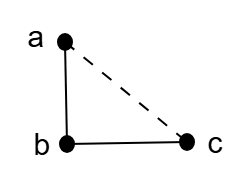}
        \caption{Type $1$ pair: $\{a, b\}$ and $\{b, c\}$}
        \label{fig:type1}
    \end{subfigure}
    \hfill
    \begin{subfigure}{0.45\textwidth}
        \centering
        \includegraphics[width=0.45\linewidth]{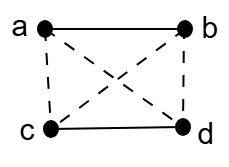}
        \caption{Type $2$ pair: $\{a, b\}$ and $\{c, d\}$}
        \label{fig:type2}
    \end{subfigure}

    \caption{Types of pair $(B_u, B_v)$ with $\{u, v\} \in E(H)$ as described in Remark \ref{G^A}}
    \label{fig:type1-2pair}
\end{figure}

\begin{lemma}
    Let $\mathcal{F} = (\mathcal{X}, \mathcal{B})$ be a $P_n$-CFF with $|\mathcal{X}| = 4$ and $n \geq 3$. Then $n \leq 4$.
    \label{base4-max4}
\end{lemma}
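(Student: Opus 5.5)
The plan is to reduce to $n=5$ and then pin down the shape of the family until no valid path ordering is left. Suppose, for contradiction, that $n\geq 5$. Deleting vertices from $P_n$ leaves the edges among the remaining vertices intact, so restricting a $P_n$-CFF to the first five vertices of the path gives a $P_5$-CFF$(4,5)$, by the argument of Proposition \ref{subgraph-CFF}. So it suffices to show that no $P_5$-CFF on $\mathcal{X}=[1,4]$ exists. By Theorem \ref{GbarCFF-1CFF}, such a family $\mathcal{B}=\{B_1,\dots,B_5\}$ is a Sperner family. In particular it contains neither $\emptyset$ nor $[1,4]$.

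The next step is to show that every set has size $2$, using that every vertex of $P_5$ has a neighbour.

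First, no set has size $3$. Suppose $|B_u|=3$ and $v$ is a neighbour of $u$. Since the family is Sperner, $B_v\not\subseteq B_u$, so $B_u\cup B_v=[1,4]$. This union then covers the other three sets, which is forbidden.

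Second, there are at most one singleton. If $\{x\}\in\mathcal{B}$, the Sperner property forces every other set to avoid $x$, so the other sets lie in $[1,4]\setminus\{x\}$. If there were two singletons $\{x\},\{y\}$, the remaining three sets would form a Sperner family inside a $2$-set. That is impossible, since such a family has at most $2$ members.

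Third, there is no singleton at all. If $\{x\}\in\mathcal{B}$, the other four sets form a Sperner family inside the $3$-set $[1,4]\setminus\{x\}$, which has at most $\binom{3}{1}=3$ members. This is a contradiction.

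Hence all five sets are $2$-subsets of $[1,4]$.

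Now I use the graph picture of Remark \ref{G^A}. The five sets are five edges of $K_4$ on $[1,4]$, i.e. $K_4$ minus one edge, say minus $\{a,b\}$, with $\{c,d\}$ the other two points. Consider any pair $(B_u,B_v)$ with $\{u,v\}\in E(P_5)$.
\begin{itemize}
\item It cannot be a Type 2 pair: that would forbid four edges of $K_4$, but only one edge is absent.
\item So it is a Type 1 pair $\{p,q\},\{q,r\}$, which forbids the edge $\{p,r\}$. This forces $\{p,r\}=\{a,b\}$.
\end{itemize}
The only such pairs are $\{a,c\},\{c,b\}$ and $\{a,d\},\{d,b\}$. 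In both, the set $\{c,d\}\in\mathcal{B}$ does not appear. So the vertex carrying $\{c,d\}$ has no admissible neighbour, yet every vertex of $P_5$ has degree at least $1$. This contradiction gives $n\leq 4$.

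I expect the main obstacle to be the case reduction to $2$-sets, in particular ruling out the mixed singleton configurations cleanly. The Sperner-inside-a-smaller-ground-set counting above should handle it. The final step in the graph picture is then a short check.
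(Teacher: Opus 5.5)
Your proof is correct and follows essentially the same route as the paper. Both arguments rule out a $3$-set, because its union with a neighbour's set is all of $[1,4]$; both rule out a singleton by applying Sperner's bound on the remaining $3$-set; and in the all-$2$-sets case both observe that the pair $\{c,d\}$ complementary to the missing pair has no admissible neighbour. Your version is slightly more complete, since you explicitly exclude $\emptyset$ and $[1,4]$ and justify the final step with the Type 1/Type 2 analysis, which the paper only calls ``easy to verify''.
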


 \begin{proof}

Let $\mathcal{X} = \{1, 2, 3, 4\}$ and let $V(P_n) = \{v_i: i \in [1, n]\}$ and $E(P_n) = \left \{ \{v_i, v_{i+1}\} : 1 \leq i \leq n-1 \right \}$. We split the proof into the following three cases:
\begin{itemize}
    \item[\textbf{Case 1}] $\mathcal{B}$ has a set $B$ with $|B| = 3$. Without loss of generality, let $B = \{1, 2, 3\}$ and let $v_{i}$ be the vertex such that $B = B_{v_i}$ for some $i \in [1, n]$. Since there is an edge $\{v_{i}, v_{j}\} \in E(P_n)$ for some $j \in \{i-1, i+1\}$, we have $4 \in B_{v_j}$. Thus, $B_{v_i} \cup B_{v_j} = \mathcal{X}$, which contains any other $B_{v_l}$ with $l \in [1, n] \setminus \{i, j\}$ leading to a contradiction. 

   \item[\textbf{Case 2}] $\mathcal{B}$ has a set $B$ with $|B| = 1$. Without loss of generality, let $B = \{1\}$. By Theorem \ref{trivial-bound}, $\mathcal{F}$ is a $1$-CFF. Since $1$ cannot be an element of other subsets in $\mathcal{F}$, $(\mathcal{X} \setminus \{1\}, \mathcal{B} \setminus \{\{1\}\})$ is also a $1$-CFF. Let $\mathcal{X}' = \mathcal{X} \setminus \{1\}$ and $\mathcal{B}' = \mathcal{B} \setminus \{\{1\}\}$. Since $|\mathcal{X}'| = 3$, then by Theorem \ref{sperner}, $|\mathcal{B}'| \leq 3$. This implies $n = |\mathcal{B}| \leq 4$.

   \item[\textbf{Case 3}] 
   $|B| = 2$ for all $B \in \mathcal{B}$. Then $\mathcal{B} \subseteq \binom{\mathcal{X}}{2}$. 
   Suppose, for the sake of contradiction, that $n \geq 5$. Let $\mathcal{F}' = (\mathcal{X}, \mathcal{B}')$ where $\mathcal{B}' = \{B_{v_1}, B_{v_2}, B_{v_3}, B_{v_4}, B_{v_5} \} \subseteq \mathcal{B}$. Then, $\mathcal{F}'$ is $P_5$-CFF. Suppose, without loss of generality, $\{1, 2\} \in \binom{\mathcal{X}}{2} \setminus \mathcal{B}'$ and so, $\mathcal{B}' = \binom{\mathcal{X}}{2} \setminus \{\{1, 2\}\}$. Let $B_{v_i} = \{3, 4\}$ for some $i \in [1, 5]$. It is easy to verify that for any $B \in \mathcal{B}' \setminus \{B_{v_i}\}$, there exists $B' \in \mathcal{B} \setminus \{B, B_{v_i}\}$ such that $B' \subseteq B_{v_i} \cup B$, which is a contradiction by Remark \ref{G^A}.
 
\end{itemize}
\end{proof}

\begin{lemma}
\label{max-5-6}
Let $\mathcal{F} = (\mathcal{X}, \mathcal{B})$ be a $P_n$-CFF with $|\mathcal{X}| = 5$ and $n \geq 3$. Then $n \leq 6$.
\end{lemma}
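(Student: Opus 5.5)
We argue by contradiction. Suppose $n \geq 7$ and restrict to the first seven sets $B_{v_1},\dots,B_{v_7}$. These form a $P_7$-CFF on $\mathcal{X}=[1,5]$, and by Theorem \ref{GbarCFF-1CFF} they also form a Sperner family. We then split into cases on the sizes of the sets, as in the proof of Lemma \ref{base4-max4}.

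\emph{Sets of size $\geq 4$ or $0$.} A set of size $0$ would be contained in every other set, contradicting the Sperner property. Suppose $|B_{v_i}|=4$ and let $B_{v_j}$ be a path-neighbour of $B_{v_i}$. Since $B_{v_j}\not\subseteq B_{v_i}$, the set $B_{v_j}$ contains the fifth point. Then $B_{v_i}\cup B_{v_j}=\mathcal{X}$ covers every other set, a contradiction.

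\emph{Singletons.} Suppose $\{1\}$ is one of the sets. By the Sperner property, no other set contains $1$, so the remaining six sets form a Sperner family on $\{2,3,4,5\}$. By Theorem \ref{sperner} they must be exactly the six $2$-subsets of $\{2,3,4,5\}$. The path $P_7$ has six edges, and at most two of them are incident to the vertex carrying $\{1\}$. Hence some edge joins two distinct $2$-subsets of a $4$-set. Their union has size at least $3$ and therefore contains a third $2$-subset, a contradiction.

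\emph{Sets of size $3$.} Suppose $B_{v_i}=\{1,2,3\}$ and let $B_{v_j}$ be a neighbour of $B_{v_i}$. Because $B_{v_j}\not\subseteq B_{v_i}$ and $B_{v_i}\cup B_{v_j}\neq\mathcal{X}$, we may take $B_{v_j}\subseteq\{1,2,3,4\}$ with $4\in B_{v_j}$. Then every other set must avoid being inside $\{1,2,3,4\}$, so each of the remaining five sets contains $5$. Deleting the point $5$ from those five sets gives a Sperner family on $\{1,2,3,4\}$ that must still respect the cover-free conditions along the remaining path edges. The plan is to exploit this to bound the number of such sets, again reducing to $2$-subsets of a $4$-set and reusing the covering argument above.

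\emph{All sets of size $2$ (main case).} Here we use the graph model of Remark \ref{G^A}: $G$ is a graph on $5$ vertices with $7$ edges, so exactly $3$ of the $10$ pairs are non-edges. If two consecutive sets formed a type $2$ pair, they would forbid $4$ pairs. This is more than the $3$ available non-edges, so every consecutive pair is of type $1$. Each type $1$ pair forbids the third side of the triangle spanned by the two edges. Therefore the path $B_{v_1},\dots,B_{v_7}$ is a walk through all $7$ edges of $G$ in which consecutive edges share a vertex and the pair formed by their other endpoints always lies among the $3$ non-edges. Equivalently, consecutive edges never close a triangle in $G$. We would finish by enumerating the possible configurations of $3$ non-edges in $K_5$ up to isomorphism (triangle, path, star, etc.). For each configuration we would check whether $G$ admits such a Hamiltonian path in its line graph avoiding triangle-closing steps. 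We expect every configuration to fail because $G$ is too dense, since it contains many triangles.

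The main obstacle is the mixed case, where sets of sizes $2$ and $3$ coexist, or the all-$2$ enumeration itself. The first attempt would be the reduction above for the mixed case: a size-$3$ set forces all far sets to contain a common point, and this should collapse the problem to the $4$-point argument or to Lemma \ref{base4-max4}.
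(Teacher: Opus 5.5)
Your size-$4$ and singleton cases are correct and match the paper's Cases~1 and~3. Your case split is also exhaustive: the size-$3$ reasoning never uses the sizes of the other sets, so the ``mixed case'' needs no separate treatment. The gap is that the two substantive cases are left as plans, and the plan for the size-$3$ case does not work as stated.

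After deleting $5$, the five remaining sets form a Sperner family on $\{1,2,3,4\}$, which does force them to be five of the six $2$-subsets. However, the covering argument from the singleton case needed \emph{all six} pairs. If $\{a,b\}$ is the missing pair, then $\{a,c\}$ and $\{b,c\}$ may be adjacent, since the only pair they cover is absent. Lemma~\ref{base4-max4} applies only if the five remaining vertices form a path, i.e.\ when $v_i$ is at position $1,2,6$ or $7$ (choosing $v_j$ toward the end). For positions $3,4,5$ you are left with two path segments and neither tool applies. These are exactly the positions that cost the paper its long Cases~2.2 and~2.3.

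The missing idea is to use \emph{both} neighbours of an internal $v_i$. Say $B_{v_{i-1}}$ contains $4$ but not $5$; this forces every other set, in particular $B_{v_{i+1}}$, to contain $5$. Then $B_{v_{i+1}}$ cannot contain $4$, and the edge $\{v_i,v_{i+1}\}$ forces every other set to contain $4$. Hence the four vertices at distance at least $2$ from $v_i$ all contain $\{4,5\}$. Their traces on $\{1,2,3\}$ would be a Sperner family of size $4$ on a $3$-set, contradicting Theorem~\ref{sperner}. Together with Lemma~\ref{base4-max4} for $v_i\in\{v_1,v_7\}$, this closes the size-$3$ case, more quickly than the paper does.

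In the all-$2$ case, your exclusion of type-$2$ pairs (four forbidden pairs but only three non-edges) is correct and a genuine simplification. However, ``we expect every configuration to fail because $G$ is dense'' is not an argument, and the failures are not uniform. There are exactly four configurations of three non-edges on five points, since three disjoint pairs would need six points:
\begin{itemize}
\item For non-edges $\{1,2\},\{1,3\},\{2,3\}$, the set $\{4,5\}$ has no admissible neighbour.
\item For $\{1,2\},\{1,3\},\{1,4\}$, the sets $\{2,3\},\{2,4\},\{3,4\}$ have none.
\item For $\{1,2\},\{1,3\},\{4,5\}$, the set $\{2,3\}$ has none.
\item For $\{1,2\},\{2,3\},\{3,4\}$, every set has an admissible neighbour. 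Here the obstruction is that the admissible transitions split into the two chains $\{1,3\},\{1,4\},\{2,4\}$ and $\{1,5\},\{2,5\},\{3,5\},\{4,5\}$, so no ordering of all seven sets exists.
\end{itemize}
These four checks (the paper's Cases~4.1--4.4) must actually be carried out to finish the proof.
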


\begin{proof}
Suppose, by contradiction, $\mathcal{F}$ is a $P_7$-CFF$(5, 7)$ and let $A$ be the
 $5 \times 7$ $\overline{P_7}$-disjunct matrix corresponding to $\mathcal{F}$. Let $V(P_7) = \{v_i: i \in [1, 7]\}$ and $E(P_7) = \left \{ \{v_i, v_{i+1}\} : 1 \leq i \leq 6 \right \}$\footnote{In all the figures in Lemma \ref{max-5-6}, the vertices are labeled $i$ instead of $v_i$ for $i \in [1, 7]$, for convenience and to save space in the figures.}. We split the proof into the following four cases:
\begin{itemize}

\item[\textbf{Case 1}] $\mathcal{B}$ has a set $B$, with $|B| = 4$. Without loss of generality, let $ B = \{1,2,3,4\} = B_{v_i}$ for some $i \in [1, n]$. Then, $5 \in B_{v_j}$ for $j \in \{i-1, i+1\}$. Thus, $B_{v_i} \cup B_{v_j} = [1,5]$, which contains any other $B_{v_l}$ with $l \in [1, n] \setminus \{i, j\}$ leading to a contradiction.

\item[\textbf{Case 2}] $\mathcal{B}$ has a set $B$, with $|B| = 3$ and no other subset $B' \in \mathcal{B}$ exists such that $|B'| > 3$. Without loss of generality, let $ B = \{1, 2, 3\}$. This case is further split into $4$ subcases where $B_{v_i} = \{1, 2, 3\}$ for $i = 1, 2, 3, 4$ since the cases for $i = 5, 6, 7$ would be symmetric for $P_7$. 

\begin{itemize}[leftmargin=0pt]

    \item[\textbf{Case 2.1}] 
    $B_{v_1} = \{1, 2, 3\}$ or  $B_{v_2} = \{1, 2, 3\}$. First, assume  $B_{v_1} = \{1, 2, 3\}$. Since $\{v_1, v_2\} \in E(P_7)$, then $B_{v_2} \nsubseteq B_{v_1}$ which implies that there is a row $r$ with $A[r, v_1] = 0$ and $A[r, v_2] = 1$. Furthermore, for each $l \in \{v_3, v_4, v_5, v_6, v_7\}$ there exists a row $r'$ with $A[r', v_1] = 0, A[r', v_2] = 0,$ and $A[r', l] = 1$. Since the column $A^{v_1}$ has two $0$'s, such row $r'$ is exactly one row with $A[r', v_1] = 0, A[r', v_2] = 0,$ and $A[r', l] = 1$ for all $l \in \{v_3, v_4, v_5, v_6, v_7\}$; see Figure~\ref{fig:subcase1}. Similarly, if $B_{v_2} = \{1, 2, 3\}$, following the same argument, we conclude that there exists a row $r$ with $A[r, v_2] = 0$ and $A[r, v_1] = 1$, as well as, exactly one row $r'$ with $A[r', v_1] = 0, A[r', v_2] = 0,$ and $A[r', l] = 1$ for all $l \in \{v_3, v_4, v_5, v_6, v_7\}$; see Figure~\ref{fig:subcase2}. Now in both cases, consider the submatrix $A'$ obtained by restricting $A$ to the columns $A^{v_3}, A^{v_4}, A^{v_5}, A^{v_6},$ and $A^{v_7}$ and to the rows $1, 2, 3,$ and $r$. $A'$ must be a $4 \times 5$ $\overline{P_5}$-disjunct matrix, which is a contradiction by Lemma~\ref{base4-max4}.

    \begin{figure}[h!]
    \centering
    \begin{subfigure}[b]{0.3\textwidth}
        \centering
        
        \includegraphics[width=\textwidth]{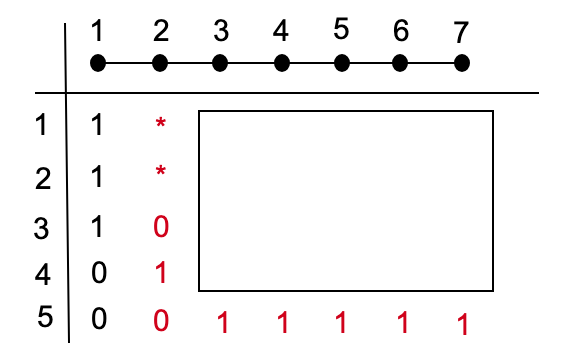}
        \caption{The subset $\{1, 2, 3\}$ corresponds to vertex $1$}
        \label{fig:subcase1}
    \end{subfigure}
    \hspace{0.02\textwidth}
    \begin{subfigure}[b]{0.3\textwidth}
        \centering
        \includegraphics[width=\textwidth]{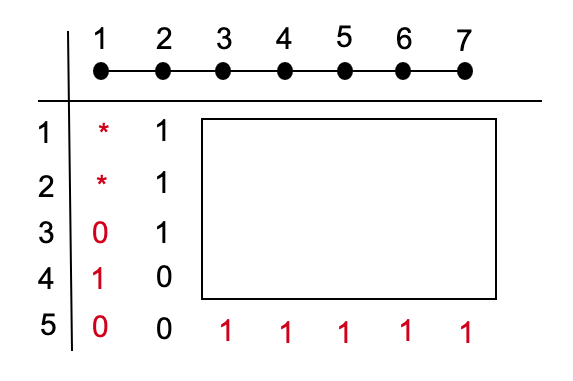}
        \caption{The subset $\{1, 2, 3\}$ corresponds to vertex $2$}
        \label{fig:subcase2}
    \end{subfigure}
    \caption{Case 2.1 of Lemma \ref{max-5-6}}
    \label{fig:main}
\end{figure}

    \item[\textbf{Case 2.2}] $B_{v_3} = \{1, 2, 3\}$. Since $\{v_2, v_3\} \in E(P_7)$; by the same argument in Case 2.1, there exist a row $r \in \{4, 5\}$ with $A[r, v_3] = 0$ and $A[r, v_2] = 1$, and a row $r' \in \{4, 5\} \setminus \{r\}$ with $A[r', v_2] = 0, A[r', v_3] = 0,$ and $A[r', l] = 1$ for all $l \in \{v_1, v_4, v_5, v_6, v_7\}$. Such entries are marked in red in Figures \ref{fig:subcase3a} and \ref{fig:subcase3b} where $r = 4$ and $r' = 5$. Since $A$ is $\overline{P_7}$-disjunct and  $\{v_3, v_4\} \in E(P_7)$, the row $r$ is the only one such that $A[r, v_3] = 0, A[r, v_4] = 0,$ and $A[r, l'] = 1$ for all $l' \in \{v_1, v_2, v_5, v_6, v_7\}$. These entries are marked in blue in Figures \ref{fig:subcase3a} and \ref{fig:subcase3b}. Since $\{v_1, v_2\} \in E(P_7)$, for each $l'' \in \{v_3, v_4, v_5, v_6, v_7\}$, there exists a row $m \in \{1, 2, 3\}$ with $A[m, v_1] = A[m, v_2] = 0$, $A[m, l''] = 1$. We split this into two possible cases:

    \begin{itemize}
        \item[\textbf{Case 2.2.1}] Suppose there exists exactly one row $m$ such that $A[m, v_1] = A[m, v_2] = 0$, $A[m, l''] = 1$ for all $l'' \in \{v_3, v_4, v_5, v_6, v_7\}$ (See the entries in green in Figure \ref{fig:subcase3a} where $m = 3$). The submatrix obtained by restricting the columns to $A^{v_5}$, $A^{v_6}$, and $A^{v_7}$ and to the rows $m'' \in \{1, 2, 3\} \setminus \{m\}$ must form a $2 \times 3$ $\overline{P_3}$-disjunct matrix, which is a contradiction.

        \item[\textbf{Case 2.2.2}] Assume there are two such rows $m_1, m_2 \in \{1, 2, 3\}$ within which the condition $A[m_i, v_1] = A[m_i, v_2] = 0$, $A[m_i, l''] = 1$ is satisfied for $l'' \in \{v_3, v_4, v_5, v_6, v_7\}$ and $i \in \{1, 2\}$. Then, the only remaining row $m_3$ must satisfy the condition $A[m_3, v_1] = 0$ and $A[m_3, v_2] = 1$ (See the green entries in Figure \ref{fig:subcase3b}). However, $\{v_6, v_7\} \in E(P_7)$ and $B_{v_1} \subseteq B_{v_6} \cup B_{v_7}$, which is a contradiction.
       
    \end{itemize}

    \begin{figure}[h!]
    \centering
    \begin{subfigure}[b]{0.3\textwidth}
        \centering
        \includegraphics[width=\textwidth]{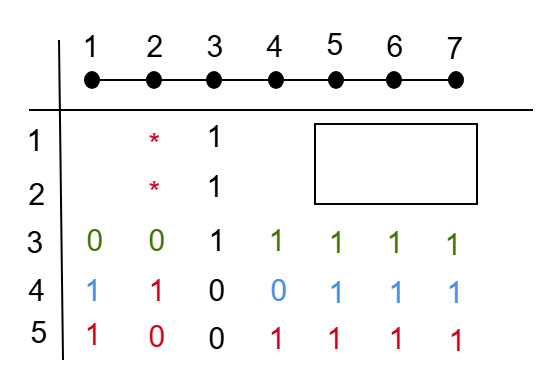}
        \caption{Case 2.2.1 of Lemma \ref{max-5-6}}
        \label{fig:subcase3a}
    \end{subfigure}
    \hspace{0.02\textwidth}
    \begin{subfigure}[b]{0.3\textwidth}
        \centering
        \includegraphics[width=1.04\textwidth]{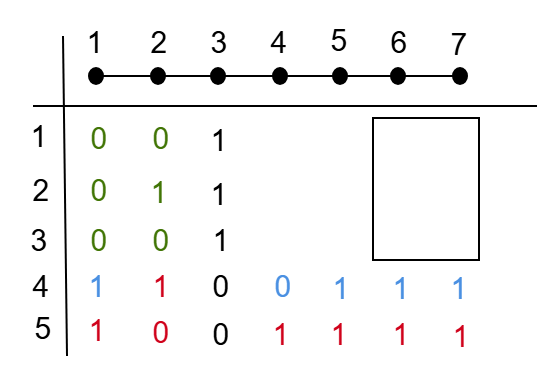}
        \caption{Case 2.2.2 of Lemma \ref{max-5-6}}
        \label{fig:subcase3b}
    \end{subfigure}
    \caption{Case 2.2 of Lemma \ref{max-5-6}}
    \label{fig:main}
\end{figure}

\item[\textbf{Case 2.3}] Let $B_{v_4} = \{1, 2, 3\}$. 
Since $\{v_3, v_4\} \in E(P_7)$, there exist a row $r \in \{4, 5\}$ with $A[r, v_4] = 0$ and $A[r, v_3] = 1$, and a row $r' \in \{4, 5\} \setminus \{r\}$ with $A[r', v_3] = 0, A[r', v_4] = 0,$ and $A[r', l] = 1$ for all $l \in \{v_1, v_2, v_5, v_6, v_7\}$ (notice the red entries in Figure \ref{fig:subcase4} where $r = 4$ and $r' = 5$). 
Furthermore, since $\{v_4, v_5\} \in E(P_7)$, the row $r$ is the one satisfying $A[r, v_4] = 0, A[r, v_5] = 0,$ and $A[r, l'] = 1$ for all $l' \in \{v_1, v_2, v_3, v_6, v_7\}$. These entries are marked in blue.
Since $\{v_6, v_7\} \in E(P_7)$, there exist rows $r_1, r_2 \in \{1, 2, 3\}$ such that $A[r_1, v_6] = 0, A[r_1, v_7] = 1$ and $A[r_2, v_6] = 1, A[r_2, v_7] = 0$; as well as exactly one row $r_3 \in \{1, 2, 3\} \setminus \{r_1, r_2\}$ with $A[r_3, v_6] = A[r_3, v_7] = 0$, and $A[r_3, l''] = 1$ for all $l'' \in \{v_1, v_2, v_3, v_4, v_5\}$ (these entries are specified in purple). 
The row $r_1$ must satisfy the condition $A[r_1, v_5] = A[r_1, v_6] = 0$ and $A[r_1, l'''] = 1$ for all $l''' \in \{v_1, v_2, v_3, v_4, v_7\}$ since $\{v_5, v_6\} \in E(P_7)$ (notice the green entries).
Since $B_{v_1} \geq 4 > 3$, this is a contradiction to our initial assumption that no subset in $\mathcal{B}$ has size $> 3$. 

\begin{figure}
    \centering
    \includegraphics[width=0.3\textwidth]{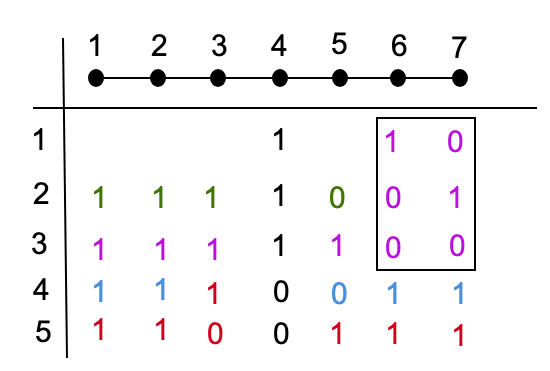}
    \caption{Case $2.3$ of Lemma \ref{max-5-6}}
    \label{fig:subcase4}
\end{figure}

\end{itemize}

\item[\textbf{Case 3}] $\mathcal{B}$ has a set $B$ with $|B| = 1$. Without loss of generality, let $B = \{1\}$ and $B = B_{v_i}$ for some $i \in [1, 7]$. 
Since $\overline{P_7}$-disjunct is also $1$-disjunct, $1 \notin B_{v_j}$ for all $j \in [1, 7] \setminus \{i\}$. Now, consider the submatrix $A'$ formed by the rows $2, 3, 4, 5$ of $A$ and the columns $\{A^{v_j} : j \in [1, 7] \setminus \{i\}\}$. Then the $4 \times 6$ matrix $A'$ is also $1$-disjunct, which implies that $\{B_{v_j} : j \in [1, 7] \setminus \{i\}\} = \binom{[2, 3, 4, 5]}{2}$. 
It is easy to verify by Remark \ref{G^A} that for any $B_j, B_k \in \mathcal{B} \setminus \{B_i\}$, there exists $B_l \in \mathcal{B} \setminus \{B_i\}$ such that $B_l \subseteq B_j \cup B_k$, which is a contradiction.

\item[\textbf{Case 4}] $|B| = 2$ for all $B \in \mathcal{B}$. Suppose there exists a $5 \times 7$ $\overline{P_7}$-disjunct matrix $A$, obtained by restricting the incidence matrix of a $1$-CFF$(5, 10)$ to the first $7$ columns. In other words, three sets must be removed from the $1$-CFF$(5, 10)$. The structure of $\mathcal{F}$ is then divided into four subcases, according to the type of sets removed, as provided in Figure \ref{fig:four}.

\begin{figure}[htbp]
    \centering

    \begin{subfigure}{0.23\textwidth}
        \centering
        \includegraphics[width=0.5\linewidth]{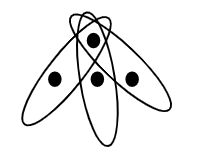}
        \caption{Case 4.1}
        \label{fig:4.1}
    \end{subfigure}
    \hfill
    \begin{subfigure}{0.23\textwidth}
        \centering
        \includegraphics[width=0.5\linewidth]{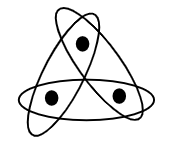}
        \caption{Case 4.2}
        \label{fig:4.2}
    \end{subfigure}
    \hfill
    \begin{subfigure}{0.23\textwidth}
        \centering
        \includegraphics[width=0.5\linewidth]{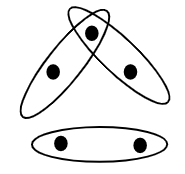}
        \caption{Case 4.3}
        \label{fig:4.3}
    \end{subfigure}
    \hfill
    \begin{subfigure}{0.23\textwidth}
        \centering
        \includegraphics[width=0.8\linewidth]{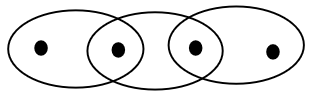}
        \caption{Case 4.4}
        \label{fig:4.4}
    \end{subfigure}

    \caption{Type of the three subsets removed from $1$-CFF$(5, 10)$ as discussed in Case 4 of Lemma \ref{max-5-6}}
    \label{fig:four}
\end{figure}

\begin{itemize}[leftmargin=0pt]
    \item[\textbf{Case 4.1}] Without loss of generality, the subsets removed are $\{1,2\}$, $\{1,3\}$, and $\{1,4\}$. The subsets in $\mathcal{F}$ are $\{1,5\}$, $\{2,3\}$, $\{2,4\}$, $\{2,5\}$, $\{3,4\}$, $\{3,5\}$, and $\{4,5\}$. Let $B_{v_1} = \{2, 3\}$. For any $B \in \mathcal{B} \setminus \{B_{v_1}\}$, if $B_{v_2} = B$; then it is easy to see that there exists $B' \in \mathcal{B}$ such that $B' \subseteq B_{v_1} \cup B_{v_2}$, which is a contradiction by Remark \ref{G^A}. Thus, $n \leq 6$.

    \item[\textbf{Case 4.2}] Without loss of generality, the subsets removed are $\{1,2\}$, $\{2,3\}$, and $\{1,3\}$. The remaining subsets are $\{1,4\}$, $\{1,5\}$, $\{2,4\}$, $\{2,5\}$, $\{3,4\}$, $\{3,5\}$, and $\{4,5\}$. Let $B_{v_1} = \{4, 5\}$. For any $B \in \mathcal{B} \setminus \{B_{v_1}\}$ with $B_{v_2} = B$; there exists $B' \in \mathcal{B}$ such that $B' \subseteq B_{v_1} \cup B_{v_2}$, which is a contradiction by Remark \ref{G^A}. Hence, it is concluded that $n \leq 6$.

    \item[\textbf{Case 4.3}] Without loss of generality, the subsets removed are $\{1,2\}$, $\{1,3\}$, and $\{4,5\}$. The remaining subsets are $\{1,4\}$, $\{1,5\}$, $\{2,3\}$, $\{2,4\}$, $\{2,5\}$, $\{3,4\}$, and $\{3,5\}$. Let $B_{v_1} = \{2, 3\}$. For any choice of $B \in \mathcal{B} \setminus \{B_{v_1}\}$ for $B_{v_2}$, there exists a subset $B' \in \mathcal{B}$ covered by $B_{v_1}$ and $B$. This is a contradiction by Remark \ref{G^A}. Therefore, $n \leq 6$. 

    \item[\textbf{Case 4.4}] Without loss of generality, the subsets removed are $\{1,2\}$, $\{2,3\}$, and $\{3,4\}$. The remaining subsets are $\{1,3\}$, $\{1,4\}$, $\{1,5\}$, $\{2,4\}$, $\{2,5\}$, $\{3,5\}$, and $\{4,5\}$. This case, however, is a little different from the above cases. Notice that that for every $B \in \mathcal{B}$, there exists a $B' \in \mathcal{B}$ such that no subset $\{x, y\}$ covered by $B$ and $B'$ exists in $\mathcal{B}$. We list all such pairs of $(B, B')$ below:
    
\begin{center}
       \noindent
\textbullet\ $(\{1, 3\}, \{1, 4\})$ \;
\textbullet\ $(\{1, 4\}, \{2, 4\})$ \;
\textbullet\ $(\{1, 5\}, \{2, 5\})$ \;
\textbullet\ $(\{2,5\}, \{3, 5\})$ \;
\textbullet\ $(\{3,5\}, \{4, 5\})$
\end{center}

Figure~\ref{Subcase4.4B} shows the maximal graph $H = (V, E)$ such that $\mathcal{F}$ is $H$-CFF with each vertex $v_i$ corresponding to a subset $B_{v_i}$ with $i \in [1, 7]$ in $\mathcal{F}$. 
This is a contradiction since $H \subseteq P_7$ and $H \neq P_7$. 
Thus, $n < 7$. 

\begin{figure}
    \centering
    \includegraphics[width=0.5\linewidth]{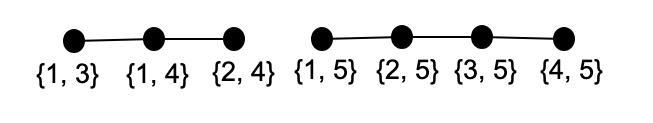}
    \caption{Maximal graph $H$ in Case $4.4$ of Lemma \ref{max-5-6}}
    \label{Subcase4.4B}
\end{figure}
    \end{itemize}
\end{itemize}
\end{proof}

Lemmas \ref{base4-max4} and \ref{max-5-6} are important results used to obtain the exact values of $t(P_n)$ and $t(C_n)$ for some small values of $n$, as shown in the following theorem.

\begin{theorem}
\label{small-cycles-678}
\begin{enumerate}
\item $t(P_5) = t(C_5) = t(P_6) = t(C_6) = 5$.
\item $t(P_7) = t(C_7) = t(P_8) = t(C_8) = t(P_9) = t(C_9) = t(P_{10}) = 6$.

\end{enumerate}
\end{theorem}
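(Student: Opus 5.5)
The plan is to get matching lower and upper bounds, using two monotonicity facts from Proposition~\ref{subgraph-CFF}. Since $P_n \subseteq C_n$, we have $t(P_n) \le t(C_n)$. Since $P_m$ is a subgraph of $P_n$ for $m \le n$, we have $t(P_m) \le t(P_n)$.

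\textbf{Part 1.} For the lower bound, suppose $t(P_5) \le 4$. Padding the ground set if necessary gives a $P_5$-CFF$(4,5)$, which contradicts Lemma~\ref{base4-max4}. Hence $t(P_5)\ge 5$. Monotonicity then gives $t(C_6), t(P_6), t(C_5) \ge t(P_5) \ge 5$. For the upper bound I invoke Theorem~\ref{maximal-path-cycle-thm} with $k=1$: for $n \in (4\cdot 3^{0}, 2\cdot 3^1] = (4,6]$ it gives $t(C_n)\le 5$, so $t(C_5), t(C_6)\le 5$. Then $t(P_n)\le t(C_n)$ finishes part 1.

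\textbf{Part 2, lower bounds.} Lemma~\ref{max-5-6} shows there is no $P_7$-CFF on a $5$-set. Together with part 1, this gives $t(P_7)\ge 6$. Monotonicity then yields $t(P_n), t(C_n) \ge 6$ for all $n \in [7,10]$.

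\textbf{Part 2, upper bounds for cycles.} Theorem~\ref{maximal-path-cycle-thm} with $k=2$ covers $n\in(2\cdot 3,3^2]=(6,9]$ and gives $t(C_7),t(C_8),t(C_9)\le 6$. By the path–cycle inequality this also gives $t(P_7), t(P_8), t(P_9) \le 6$.

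\textbf{Part 2, the case $P_{10}$.} This is the main obstacle. The general theorem only gives $7$ for $n=10$, and Proposition~\ref{cycle-path-doubling-bound} gives $t(P_5)+2=7$, so an explicit $P_{10}$-CFF$(6,10)$ is needed.

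My first attempt is to extend the $9$-set family from the reflected Gray code $R_2^{(3,3)}$ on the partition $\{1,2,3\}\,\dot\cup\,\{4,5,6\}$ by one extra set $B$ attached at an endpoint. The family consists of $2$-sets forming a $P_9$-CFF$(6,9)$, but it uses only transversal pairs. I would try a non-transversal $2$-set or a suitable $3$-set for $B$, chosen so that three conditions hold:
\begin{enumerate}
\item $B$ is not contained in the union of any consecutive pair of the path;
\item no set of the path lies in $B \cup B_{\mathrm{end}}$;
\item $B$ and $B_{\mathrm{end}}$ are incomparable.
\end{enumerate}
Condition (ii) shows why transversal $2$-sets fail: a $4$-set union of two $2$-sets contains up to four pairs. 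So I expect to need either a $3$-set $B$ meeting the path's endpoint set in one element, or a rearranged family mixing sizes $2$ and $3$.

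If the Gray-code extension fails, I would search directly for $10$ subsets of $[1,6]$ ordered along a path. I would restrict the search by Theorem~\ref{GbarCFF-1CFF}, so the family must be Sperner. I would also use the analysis style of Lemma~\ref{max-5-6}: no set of size $\ge 5$, and sizes mostly $2$ and $3$. The resulting family would then be verified edge by edge using Remark~\ref{G^A}-style covering checks.
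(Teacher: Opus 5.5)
\textbf{Gap: the upper bound $t(P_{10})\le 6$ is never established.} Everything else in your proposal is correct and follows the paper's argument. The only difference is minor: the paper gets $t(C_5)\le 5$ from the trivial bound $t(2,5)=5$, whereas you use Theorem~\ref{maximal-path-cycle-thm}. For $P_{10}$ you rightly see that an explicit $P_{10}$-CFF$(6,10)$ is needed, but you don't produce one. Moreover, the route you try first cannot work.

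The sets obtained from $R_2^{(3,3)}$ (indeed from any Gray code of $\mathbb{Z}_3\times\mathbb{Z}_3$) are \emph{all} nine transversal pairs $\{a,b\}$ with $a\in\{1,2,3\}$ and $b\in\{4,5,6\}$. Let $B_{\mathrm{end}}=\{a,b\}$ and suppose $B\not\subseteq\{a,b\}$. Pick $z\in B\setminus\{a,b\}$.
\begin{itemize}
\item If $z\in\{1,2,3\}$, then $\{z,b\}$ is another family member contained in $B\cup B_{\mathrm{end}}$.
\item If $z\in\{4,5,6\}$, then $\{a,z\}$ is another family member contained in $B\cup B_{\mathrm{end}}$.
\end{itemize}
Either way your condition (ii) fails. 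If instead $B\subseteq\{a,b\}$, condition (iii) fails. So no $2$-set, no $3$-set, in fact no set at all, can be attached at either endpoint of that family. Your fallback, an unspecified search, is a plan rather than a proof.

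The paper closes the gap with a direct family of ten $3$-subsets of $[1,6]$, listed in path order:
$\{1,2,3\}$, $\{1,2,4\}$, $\{1,2,5\}$, $\{1,5,6\}$, $\{1,3,5\}$, $\{3,4,5\}$, $\{2,3,5\}$, $\{2,3,6\}$, $\{2,4,6\}$, $\{4,5,6\}$.
\begin{itemize}
\item Consecutive sets share exactly two points, so every edge union is a $4$-set.
\item Because the family is $3$-uniform, it is Sperner, and the only sets that could be covered by an edge are the other two $3$-subsets of that $4$-set.
\item One checks these are never in the family. For example, the first union $\{1,2,3,4\}$ has other $3$-subsets $\{1,3,4\}$ and $\{2,3,4\}$, and neither is used.
\end{itemize}
This gives $6\le t(P_9)\le t(P_{10})\le 6$, which is the piece your proposal is missing.
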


\begin{proof}

\begin{enumerate}
    \item By Propositions \ref{subgraph-CFF} and \ref{trivial-bound}, we have that $4 \leq t(P_5) \leq t(C_5) \leq 5$. However, in Lemma \ref{base4-max4} we proved that  with a ground set of size $4$, a CFF on a path has at most $4$ columns. Hence, $t(P_5) = t(C_5) = 5$. By Proposition \ref{subgraph-CFF} and Theorem \ref{maximal-path-cycle-thm}, we have $ t(P_5) \leq t(P_6) \leq t(C_6) \leq 5$. Since $t(P_5) = 5$, we have $t(P_6) = t(C_6) = 5$. 

    \item By Proposition \ref{subgraph-CFF} and Theorem \ref{maximal-path-cycle-thm}, we have $5 = t(P_6) \leq t(P_7) \leq t(C_7) \leq 6$. Since by Lemma \ref{max-5-6}, we have that with a ground set of size $5$, a CFF on a path has at most $6$ columns. Thus, $t(P_7) = t(C_7) = 6$. By Proposition \ref{subgraph-CFF} and Theorem \ref{maximal-path-cycle-thm}, we have $t(P_7) \leq t(P_8) \leq t(C_8) \leq 6$. Since $t(P_7) = 6$, thus, $t(P_8) = t(C_8) = 6$. Again, by Proposition \ref{subgraph-CFF} and Theorem \ref{maximal-path-cycle-thm}, $6 = t(P_8) \leq t(P_9) \leq t(C_9) \leq 6$. Hence, $t(P_9) = t(C_9) = 6$. 

Let $V(P_{10}) = \{v_i : i \in [1 ,10]\}$ and $E(P_{10}) = \left\{ \{v_i, v_{i+1}\} : i \in [1,9]\} \right\}$. With the ground set $[1, 6]$, let $\mathcal{F} = \{B_{v_i} : i \in [1, 10]\}$ be the set system where 
\[
\begin{array}{lllll}
B_{v_1} = \{1, 2, 3\} & B_{v_3} = \{1, 2, 5\} & B_{v_5} = \{1, 3, 5\} & B_{v_7} = \{2, 3, 5\} & B_{v_9} = \{2, 4, 6\} \\
B_{v_2} = \{1, 2, 4\} & B_{v_4} = \{1, 5, 6\} & B_{v_6} = \{3, 4, 5\} & B_{v_8} = \{2, 3, 6\} & B_{v_{10}} = \{4, 5, 6\} \\
\end{array}
\]
It is easy to check that $\mathcal{F}$ is a $P_{10}$-CFF.  Since $6 = t(P_9) \leq t(P_{10}) \leq 6$, we have $t(P_{10}) = 6$.

\end{enumerate}
 \end{proof}


\begin{op}
\label{op-pathCFF-from-Sperner}
 Just as in the case of the $P_{10}$-CFF built with $[1,6]$ by carefully eliminating subsets from its largest Sperner family, it would be interesting to investigate whether this technique can be generalized for large $n$, to see whether this can improve the bound presented in Theorem \ref{maximal-path-cycle-thm}. 
\end{op}

The fact that $t(P_5) = 5$ and $t(P_{10}) = 6$ is an example of the relationship $t(P_{2n}) = t(P_{n}) + 1$. Similarly, $t(C_4) = 4$ and $t(C_{8}) = 6$ is an example of the relationship $t(C_{2n}) = t(C_{n}) + 2$. It would be interesting to investigate when $t(C_{2n}) = t(C_{n}) + 1$ and when $t(C_{2n}) = t(C_{n}) + 2$.

\begin{corollary} Let $Q_n$ be the Hamming graph $H(n, 2)$. Then, $t(Q_2) = 4$ and $t(Q_3) = 6$.
 
\end{corollary}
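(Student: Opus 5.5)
The plan is to recognize $Q_2$ and $Q_3$ as Hamiltonian graphs that contain small cycles, and to sandwich $t(Q_n)$ between $t(C_{2^n})$ and the ground-set size of the Gray code construction.

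For the upper bounds, I take the binary Gray code $G_n^{(2,\ldots,2)}$ and apply Construction \ref{CFFonPaths-cons}. By Proposition \ref{Gray-maximal-hamming}, $\mathcal{F}(G_n^{(2,\ldots,2)})$ is an $H_{2,\ldots,2}$-CFF$(2n,2^n)$. Since $H_{2,\ldots,2} = Q_n$, this gives $t(Q_n)\le 2n$, that is, $t(Q_2)\le 4$ and $t(Q_3)\le 6$. For $Q_3$, the $C_8$-CFF$(6,8)$ of Figure \ref{fig:BRGC3-C8CFF} is already an explicit instance.

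For the lower bounds, I use the fact that $Q_n$ is Hamiltonian, so $C_{2^n}\subseteq Q_n$. Proposition \ref{subgraph-CFF} then gives $t(C_{2^n})\le t(Q_n)$. For $n=2$ we have $Q_2=C_4$, and Proposition \ref{small-cycles-234} gives $t(C_4)=4$, so $t(Q_2)=4$. For $n=3$, Theorem \ref{small-cycles-678} gives $t(C_8)=6$, so $t(Q_3)\ge 6$, and combined with the upper bound, $t(Q_3)=6$.

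The only substantive input is the lower bound $t(C_8)\ge 6$. It rests on Lemma \ref{max-5-6}, which says that a $P_n$-CFF on a $5$-set has $n\le 6$, applied through $P_8\subseteq C_8$. Since that lemma is already available, the corollary needs no further case analysis. The one point to check is that the identification $H(n,2)=H_{2,\ldots,2}=K_2\square\cdots\square K_2$ matches the definition of $Q_n$ given before Proposition \ref{Gray-maximal-hamming}, which holds directly by that definition.
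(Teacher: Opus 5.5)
Your proposal is correct and matches the paper's proof. Both sandwich $t(C_{2^n}) \le t(Q_n) \le 2n$ using Proposition \ref{Gray-maximal-hamming}, which combines the binary Gray code construction with Hamiltonicity and Proposition \ref{subgraph-CFF}, and then plug in $t(C_4)=4$ from Proposition \ref{small-cycles-234} and $t(C_8)=6$ from Theorem \ref{small-cycles-678}.
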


\begin{proof}
    By Proposition \ref{Gray-maximal-hamming}, $t(C_{2^n}) \leq t(Q_n) \leq 2n$. For $n = 2$ and $3$, since $t(C_4) = 4$ (by Proposition \ref{small-cycles-234}) and $t(C_8) = 6$ (by Proposition \ref{small-cycles-678}), we have the desired results. 
\end{proof}

We now determine $t(W_n)$ for small values of $n$. 
 \begin{corollary} 
 \label{wheel-small}
 \begin{enumerate}
    \item $t(W_5) = 5$. 
     \item $t(W_6) = t(W_7) = 6$.
     \item $t(W_8) = t(W_9) = t(W_{10}) = 7$.
 \end{enumerate}
\end{corollary}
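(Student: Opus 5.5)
The plan is to apply Corollary \ref{wheel} and Corollary \ref{star-gen-cons-special-case} to $W_{n+1} = C_n + \{0\}$. The inputs are the exact values $t(C_4)=4$ (Proposition \ref{small-cycles-234}) and $t(C_5)=t(C_6)=5$, $t(C_7)=t(C_8)=t(C_9)=6$ (Theorem \ref{small-cycles-678}). We also need the Sperner values from Equation \ref{max-sperner-family}. Since $\binom{3}{1}=3$, $\binom{4}{2}=6$ and $\binom{5}{2}=10$, we get $t(1,4)=4$, $t(1,5)=t(1,6)=5$, and $t(1,7)=t(1,8)=t(1,9)=5$.

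For $W_5 = C_4 + \{0\}$ we cannot use the special-case corollary, because $t(C_4)-t(1,4)=0$. Instead we use Corollary \ref{wheel} with $n=4$. It gives the lower bound $t(W_5) \geq t(1,4)+1 = 5$, and the lower bound $t(W_5)\ge t(C_5)=5$ as well. It gives the upper bound $t(W_5) \leq t(C_4)+1 = 5$. Hence $t(W_5)=5$.

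For the remaining wheels the general bounds of Corollary \ref{wheel} leave a gap. For example, for $W_6$ they only give $5 \leq t(W_6) \leq 6$. So the key step is to use the sharper Corollary \ref{star-gen-cons-special-case}, which requires checking $t(C_n) - t(1,n) = 1$ for $n = 5,\dots,9$. We have $t(C_5)-t(1,5) = 5-4$ and $t(C_6)-t(1,6)=5-4$. We also have $t(C_n)-t(1,n) = 6-5$ for $n=7,8,9$. In each case the corollary yields $t(C_n + \{0\}) = t(C_n)+1$. Therefore $t(W_6)=t(W_7)=6$ and $t(W_8)=t(W_9)=t(W_{10})=7$.

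There is no real obstacle in this argument. The only points needing care are to recognise that the general bounds of Corollary \ref{wheel} are not sufficient from $W_6$ on, and to get the Sperner thresholds exactly right (for instance $t(1,5)=4$ because $\binom{4}{2}=6\ge 5$). The values of $t(C_n)$ themselves come directly from the exact cycle results already established.
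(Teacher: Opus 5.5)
Your proof is correct and follows the paper's argument: Corollary \ref{wheel} settles $W_5$, and Corollary \ref{star-gen-cons-special-case} applied with $t(C_n)=t(1,n)+1$ settles the remaining wheels. The paper instead gets $t(W_7)=6$ from the sandwich $t(C_7)\le t(W_7)\le t(C_6)+1$, but your route via $t(C_6)-t(1,6)=1$ works equally well.

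One slip: in your first paragraph you wrote $t(1,5)=t(1,6)=5$. The correct value, which you do use later, is $t(1,5)=t(1,6)=4$, since $\binom{4}{2}=6$.
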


\begin{proof}
In $W_n$, we label the universal vertex by $0$, and label the other vertices forming a $C_{n-1}$ as $[1, n-1]$.
    \begin{enumerate}
        \item By Corollary \ref{wheel}, $t(C_5) \leq t(W_5) \leq t(C_4) + 1$. By Proposition \ref{small-cycles-234}, $t(C_4) = 4$ and by Theorem \ref{small-cycles-678}, $t(C_5) = 5$. Thus, $t(W_5) = 5$. 
        
        \item By Corollary \ref{wheel}, $ t(C_6) \leq t(W_6) \leq t(C_5) + 1$. By Theorem \ref{small-cycles-678}, $t(C_5) = 5$ and $t(C_6) = 5$. Thus, we obtain $5 \leq t(W_6) \leq 6$. Since $t(1, 5) = 4$ and $t(C_5) = t(1, 5) + 1$, by Corollary \ref{star-gen-cons-special-case}, $t(W_6) = 6$. By Corollary \ref{wheel}, $t(C_7) \leq t(W_7) \leq t(C_6) + 1$. By Theorem \ref{small-cycles-678}, $6 \leq t(W_7) \leq 6$. Thus, $t(W_7) = 6$. 

        \item Since $t(1, 7) = 5$ and by Theorem \ref{small-cycles-678}, $t(C_7) = 6 = t(C_8)$; by Corollary \ref{wheel} we have $6 \leq t(W_8) \leq 7$. Since $t(C_7) = t(1, 7) + 1$, by Corollary \ref{star-gen-cons-special-case} we have $t(W_8) = 7$. Since $t(C_8) = 6$ and $t(C_9) = 6$ (by Theorem \ref{small-cycles-678}) and $ t(1, 8) = 5$; by Corollary \ref{wheel} we obtain $6 \leq t(W_9) \leq 7$. Since  $t(C_8) = t(1, 8) + 1$, by Corollary \ref{star-gen-cons-special-case} we have $t(W_9) = 7$. Since $P_{10} \subseteq W_{10}$, by Proposition \ref{subgraph-CFF} $t(P_{10}) \leq t(W_{10})$. Since by Theorem \ref{small-cycles-678} $t(P_{10}) = 6$ and $t(1,9) = 5$, we have $t(W_{10}) \geq 6$. By Corollary \ref{wheel}, $t(W_{10}) \leq t(C_9) + 1$ and by Theorem \ref{small-cycles-678}, $t(C_9) = 6$. Thus, we obtain $6 \leq t(W_{10}) \leq 7$. Since $t(C_9) = t(1, 9) + 1$, by Corollary \ref{star-gen-cons-special-case} we have $t(W_{10}) = 7$.
        
    \end{enumerate}
\end{proof}

\section{Conclusion and Further directions}
\label{future}

This paper continues the work done in \cite{IM1,thaismourastructureaware} and is the first to systematically explore CFFs based on a graph structure. We provided several bounds on $t(G)$, along with families of graphs achieving the extreme bounds, as well as tight and non-trivial asymptotic bounds for CFFs on several families of graphs. Along with some open problems mentioned earlier (Open problems \ref{op-E2m-existence}, \ref{op-E2m-categorize}, and \ref{op-pathCFF-from-Sperner}), we ask the following questions that are also worth investigating:

\begin{op}
    Since $t(K_n) = t(2, n)$, it is also worth noting that some dense graphs $G \subseteq K_n$ may have $t(G) = t(2, n)$. Our question is: what is the maximum number of edges that can be deleted from $K_n$ such that the resultant graph $G$ still has $t(G) = t(2, n)$? A related question is: what is the minimum number of edges that need to be deleted from $K_n$ such that the resultant graph $G$ has $t(G) < t(2, n)$?
\end{op}

\begin{op}
    Does there exist a family of sparse graphs $G$ (possibly high structured) such that $t(G) = t(2, |V(G)|)$?
\end{op}

In addition, finding exact values or bounds of $t(G)$ for several other classes of graphs would be another interesting direction of research.

\bibliography{References}
\bibliographystyle{plain}

\appendix

\section{Proof of Propositions \ref{cyclic-even-m1} and \ref{loc-rec-fixed-proof} }

\subsection{Proof of Proposition \ref{cyclic-even-m1}}
\label{cyclic-even-m1-proof}
\begin{proof}

\par From Construction~\ref{mixed-radix-gray-reflected-code-recursive}, it is easy to see that the first codeword is always the zero tuple, and the last codeword is an $n$-tuple starting with $m_1 - 1$, followed by all zeros (if $m_1$ is even), since we append the reverse order of $R_n^{(m_2, \ldots, m_n)}$ to $m_1 - 1$.
    \par Assume $R_{n}^{(m_1, m_2, \ldots, m_n)}$ is a cyclic reflected Gray code. Then, the first and last codewords also have Hamming distance $1$. Hence, by Construction \ref{mixed-radix-gray-reflected-code-recursive}, the last codeword must be the $n$-tuple starting with $m_{1}-1$ followed by all $0$ and this is only possible only if $m_1$ is even.
\end{proof}

\subsection{Proof of Proposition \ref{loc-rec-fixed-proof}}
\label{loc-rec-fixed-proof-exp}
\begin{proof}
   
\noindent The proof is by induction on $n$. The result can be easily verified for $n = 1$.

\par \textbf{Induction Hypothesis:} Assume that $M_{n-1}^{q}$ is a cyclic modular Gray code for some integer $n \geq 2$. We will prove that $M_{n}^{q}$ is a cyclic modular Gray code.

\par It is clear that $M_{n}^{q}$ contains all the $q^n$ $n$-tuples. 
 For the order of the codewords listed in $D_i \cdot (q - i \mod q)^{+}$ for all $0 \leq i \leq q^{n-1}-1$, it is clear that the rightmost digit of the codewords changes following the general transitions $0 \rightarrow 1 \rightarrow 2 \rightarrow \cdots \rightarrow q - 1 \rightarrow q \rightarrow 0 \rightarrow 1 \rightarrow 2 \rightarrow \cdots$.

Now we consider two consecutive codewords, where one of them is the last codeword of $D_i \cdot l^+$ and the other is the first codeword of $D_{i+1} \cdot m^+$, with $0 \leq i \leq q^{n-1} - 2$, $l = q - i \mod q$, and $m = q - i - 1 \mod q$. Then the last codeword of $D_i \cdot l^+$ is $D_i \cdot (2q - i - 1 \mod q)$, which is $D_i \cdot (-i - 1 \mod q)$, and the first codeword of $D_{i+1} \cdot m^+$ is $D_{i+1} \cdot (q - i - 1 \mod q)$, which is $D_{i+1} \cdot (-i - 1 \mod q)$. Since, by the Induction Hypothesis, $D_i$ and $D_{i+1}$ are two consecutive codewords in $M_{n-1}^{q}$, which is cyclic and modular, $D_i \cdot (-i - 1 \mod q)$ and $D_{i+1} \cdot (-i - 1 \mod q)$ are also codewords following the general transition of a cyclic modular Gray code.

The above claim also holds for the first and last codewords in $M_{n}^{q}$, which are $0^n$ and $(q - 1)0^{n - 1}$. Therefore, $M_{n}^{q}$ is a cyclic modular Gray code.

\end{proof}

\end{document}